\newcommand{\F}{{\mathbb{F}}}
\newcommand{\C}{{\mathbb{C}}}
\newcommand{\Hom}{{\mathrm{Hom}}}
\newcommand{\Irr}{{\mathrm{Irr}}}
\newcommand{\repspan}{{\mathrm{span}}}
\newcommand{\Ind}{{\mathrm{Ind}}}
\newcommand{\GL}{{\textnormal{GL}}}
\newcommand{\SL}{{\mathrm{SL}}}
\newcommand{\Sp}{{\mathrm{Sp}}}
\newcommand{\SO}{{\mathrm{SO}}}
\newcommand{\Ort}{{\mathrm{O}}}
\numberwithin{equation}{section}
\newtheorem{Prop}[equation]{Proposition}
\newtheorem{Lem}[equation]{Lemma}
\newtheorem{Thm}[equation]{Theorem}
\newtheorem{Cor}[equation]{Corollary}
\newtheorem{Rem}[equation]{Remark}
\title
   [ Theta lifts of generic representations ]
   { Theta lifts of generic representations \\ for dual pairs $(\mathrm{Sp}_{2n}, \mathrm{O}(V))$}
\author{Petar Baki\' {c}}
\address{ Department of Mathematics,
Faculty of Science,
University of Zagreb,
Bijeni\v cka 30,
10000 Zagreb,
Croatia}
\email{pbakic@math.hr}
\subjclass[2010]{Primary 22E35 and 22E50; Secondary 11F70}
\keywords{}
\thanks{This work has been supported in part by the Croatian Science Foundation under the project IP-2018-01-3628.}
\begin{document}

\begin{abstract}
We determine the occurrence and explicitly describe the theta lifts on all levels of all the irreducible generic representations for the dual pair of groups $(\mathrm{Sp}_{2n}, \mathrm{O}(V))$ defined over a local nonarchimedean field $\F$ of characteristic $0$. As a direct application of our results, we are able to produce a series of non-generic unitarizable representations of these groups.
\end{abstract} 

\maketitle

\section{Introduction}
\label{sec:intro}

In this paper we describe the theta correspondence for the dual pair $(\Sp(W),\Ort(V))$ defined over a nonarchimedean local field $\F$ of characteristic $0$. Our main results provide a complete and explicit description of all the theta lifts of generic representations.

For $\F$ as above, we consider the usual towers of symplectic and quadratic spaces. More specifically, for $\epsilon = \pm 1$ we have $W_n =  \text{a }(-\epsilon)\text{-Hermitian space of dimension }n$, and $V_m = \text{an }\epsilon\text{-Hermitian space of dimension }m$ (see \S\ref{subs:groups}). If we denote by $G(W_n)$ and $H(V_m)$ the corresponding isometry groups, then $G(W_n) \times H(V_m)$ is a reductive dual pair inside a larger symplectic group $\Sp(W_n \otimes V_m)$. Fixing an additive character $\psi$ of $\F$, we obtain a Weil representation $\omega_{m,n}$ of the dual pair $G(W_n) \times H(V_m)$ (or the corresponding double covers). For an irreducible smooth representation $\pi$ of $G(W_n)$, the maximal $\pi$-isotypic quotient of $\omega_{m,n}$ is of the form
\[
\pi \otimes \Theta(\pi, V_m)
\]
where $\Theta(\pi, V_m)$ is an admissible representation of $H(V_m)$. The Howe duality conjecture (see Theorem \ref{thm:Howeduality}) asserts that $\Theta(\pi,V_m)$ has a unique irreducible quotient, denoted by $\theta(\pi,V_m)$, whenever it is non-zero. The basic problems regarding this construction are determining whether $\Theta(\pi,V_m)$ is non-zero and providing an explicit description of $\theta(\pi,V_m)$.

These questions have been studied by a number of authors. Important results were first obtained by Howe \cite{howe1979beyond}, Kudla \cite{kudla1986local}, Kudla-Rallis \cite{kudla2005first}, Waldspurger \cite{waldspurger1990}, and others. This paper relies mainly on the works of Mui\'{c} (\cite{muic2004howe}, \cite{muic2008theta}) which provide a complete description of $\theta(\pi,V_m)$ when $\pi$ is in discrete series, and on the more recent work of Atobe and Gan \cite{atobe2017local}, which gives an analogous description (in a somewhat broader setting) for tempered $\pi$, using $L$-parameters.

In this paper, we provide a complete answer to these questions when $\pi$ is a generic representation. We restrict ourselves to the case when both $m$ and $n$ are even, resulting in the dual pair $(\Sp(W_n), \Ort(V_m))$.

The question of determining whether $\Theta(\pi,V_m)$ is non-zero is answered in terms of the first occurrence in a Witt tower. Namely, if $(V_m)$ is a Witt tower of $\epsilon$-Hermitian spaces, then Kudla's persistence principle (see Proposition \ref{prop:towers}) guarantees that the number $m(\pi) = \min\{m : \Theta(\pi,V_m)\neq 0\}$ is finite and that $\Theta(\pi, V_{m(\pi)+2r}) \neq 0$ for any $r \geqslant 0$. The following result (Theorem \ref{thm:first_occ}) describes the first occurrence index $m(\pi)$ for any irreducible representation which is isomorphic to its standard module; this includes generic representations because of the standard module conjecture (see \S \ref{subs:generic}).
\begin{Thm}
Let $\pi \in \Irr(G(W_n))$ be a representation isomorphic to its standard module,
\[
\pi \cong \chi_V\delta_r\nu^{s_r} \times \dotsm \times \chi_V\delta_1\nu^{s_1} \rtimes \pi_0.
\]
Then $m(\pi) = m(\pi_0) + n - n_0$, where $n_0$ is defined by $\pi_0 \in \Irr(G(W_{n_0}))$.
\end{Thm}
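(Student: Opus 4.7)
The plan is to establish the two inequalities $m(\pi) \leq m(\pi_0) + n - n_0$ and $m(\pi) \geq m(\pi_0) + n - n_0$ separately, both via Kudla's filtration of Jacquet modules of the Weil representation together with Frobenius reciprocity. Set $m_0 := m(\pi_0)$ and let $k_i := \deg \delta_i$, so that $\sum_i k_i = (n - n_0)/2$.

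For the upper bound I would proceed by induction on $r$. The inductive step reduces to the following induction principle in the spirit of Mui\'c: if $\sigma \in \Irr(G(W_k))$ satisfies $\Theta(\sigma, V_l) \neq 0$ and $\tau_0 = \chi_V \delta \nu^s$ is essentially square integrable of degree $d$, then $\Theta(\tau_0 \rtimes \sigma, V_{l + 2d}) \neq 0$. The idea is to lift a non-zero surjection $\omega_{l,k} \twoheadrightarrow \sigma \otimes \Theta(\sigma, V_l)$ to the larger dual pair: Kudla's filtration of $(\omega_{l+2d,\, k+2d})_{N_d}$ along the maximal parabolic with Levi $\GL_d \times G(W_k)$ has a distinguished layer whose $\GL_d$-action accommodates $\chi_V \delta \nu^s$, and Frobenius reciprocity then produces a non-zero quotient of $\omega_{l+2d,\,k+2d}$ of the form $(\tau_0 \rtimes \sigma) \otimes (\mathrm{something} \rtimes \Theta(\sigma, V_l))$. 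Iterating gives $\Theta(\pi, V_{m_0 + n - n_0}) \neq 0$.

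For the lower bound, suppose toward contradiction that $\Theta(\pi, V_m) \neq 0$ for some $m < m_0 + n - n_0$, with irreducible quotient $\sigma$. Consider the standard parabolic $P \subset G(W_n)$ with Levi $\GL_{k_r} \times \dotsm \times \GL_{k_1} \times G(W_{n_0})$. On one hand, the standard-module assumption guarantees that $\chi_V\delta_r\nu^{s_r} \otimes \dotsm \otimes \chi_V\delta_1\nu^{s_1} \otimes \pi_0$ appears as a distinguished (top) constituent of the Jacquet module $\pi_{N_P}$. On the other hand, iterated applications of Kudla's filtration decompose $(\omega_{m,n})_{N_P}$ into explicit layers, each involving a smaller Weil representation tensored with characters/exponents on the $\GL$ side. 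The crux is to show that, thanks to the strict inequalities $s_r > \dotsm > s_1 > 0$ forced by the standard-module condition, and the specific exponents appearing in Kudla's filtration, only the deepest layer — the one descending all the way to $G(W_{n_0}) \times H(V_{m - (n - n_0)})$ — can match the $\chi_V\delta_r\nu^{s_r} \otimes \dotsm \otimes \chi_V\delta_1\nu^{s_1}$ factor of $\pi_{N_P}$; all intermediate layers contribute exponents on the $\GL$ side that are incompatible with the prescribed cuspidal support. This forces a non-zero equivariant map $\omega_{m - (n - n_0),\, n_0} \twoheadrightarrow \pi_0 \otimes \sigma'$ for some non-zero $\sigma'$, giving $\Theta(\pi_0, V_{m - (n - n_0)}) \neq 0$ at level strictly less than $m_0$, contradicting $m_0 = m(\pi_0)$.

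The main obstacle is the combinatorial analysis in the lower-bound argument: the number of layers in the iterated Kudla filtration grows with $r$, and ruling out each intermediate layer's contribution to the cuspidal support of $\pi$ requires careful matching of the twists $\chi_V|\cdot|^\alpha$ arising in Kudla's filtration against the exponents in $\chi_V \delta_i \nu^{s_i}$. The strict inequalities $s_r > \dotsm > s_1 > 0$ from the standard-module hypothesis — combined with the irreducibility of $\pi$ — are the essential ingredient that makes the cancellation argument go through; the tempered case (where $s_i \to 0$) was handled by Atobe-Gan, and the present setting requires extending their bookkeeping to accommodate the non-tempered exponents.
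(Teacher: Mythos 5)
Your two-inequality strategy matches the overall structure, but the paper establishes \emph{both} inequalities through vanishing statements plus the conservation relation, whereas your upper bound tries to prove nonvanishing directly from Kudla's filtration. That is where the proposal breaks down.

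\textbf{The upper-bound gap.} The step you want (if $\Theta(\sigma,V_l)\neq 0$ then $\Theta(\tau_0\rtimes\sigma,V_{l+2d})\neq 0$) does not follow from Kudla's filtration and Frobenius reciprocity as you describe. Frobenius reciprocity identifies $\Hom_{G_{k+2d}}(\omega,\,\tau_0\rtimes\sigma\otimes(\,\cdot\,))$ with $\Hom(R_{P_d}(\omega),\,\tau_0\otimes\sigma\otimes(\,\cdot\,))$, but in Kudla's filtration the layer that supports a non-character $\GL_d$-type like $\tau_0=\chi_V\delta\nu^{s}$ is the \emph{bottom} subobject $J^{d}$ (the one carrying $\Sigma_d$); the top quotient $J^{0}$ has only the character $\chi_V|\det|^{\lambda}$ as its $\GL_d$-component. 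A nonzero map out of the subobject $J^{d}$ does not automatically extend to all of $R_{P_d}(\omega)$, so one cannot conclude the required nonvanishing. This is exactly why Corollary~\ref{cor:theta_epi} has the form it does: it yields a surjection $\chi_W\delta\rtimes\Theta_l(\pi_0)\twoheadrightarrow\Theta_l(\pi)$, so a \emph{zero} source forces a zero target, but a nonzero source says nothing about the target. The paper therefore proves the upper bound by showing $\Theta_{-l(\pi_0)}(\pi)=0$ on the going-up tower for $\pi_0$ (this vanishing \emph{does} follow, since $\Theta_{-l(\pi_0)}(\pi_0)=0$ there), identifies the towers, and lets the conservation relation convert ``no later occurrence going up'' into ``occurrence by the predicted level going down.'' You would need that detour, or an independent proof of your step lemma.

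\textbf{The lower bound.} Your contrapositive argument (that $\Theta(\pi,V_m)\neq 0$ below the expected level would force $\Theta(\pi_0,V_{m-(n-n_0)})\neq 0$) is morally the same as the paper's second step, but the iterated Jacquet-module bookkeeping you describe is already packaged by Proposition~\ref{prop:Muic_isotype} and Corollary~\ref{cor:theta_epi}: only the top two layers can contribute to $\Hom(J^{a},\chi_V\delta^\vee\otimes\pi_0)$. The paper's proof is simply $r$ applications of the corollary at the level $l=l(\pi_0)+2$.

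\textbf{A missing case.} At $l=l(\pi_0)+2>0$ it can happen that some $\delta_i\nu^{s_i}\cong\textnormal{St}_k\nu^{(l-k)/2}$, precisely the exceptional case excluded in Corollary~\ref{cor:theta_epi}. The paper handles this by using the irreducibility of the standard module to replace $\chi_V\delta_i\nu^{s_i}$ by $\chi_V\delta_i^\vee\nu^{-s_i}$ (which induces the same representation), escaping the exception. Your proposal does not address this, and it is genuinely needed.

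\textbf{Minor.} The exponents in a standard module satisfy $s_r\geq\dotsb\geq s_1>0$, with equalities allowed; the ``strict inequalities'' you invoke are not forced by the standard-module condition.
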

The notation used here for parabolic induction is introduced in Section \ref{subs:reps}. Furthermore, $\chi_V$ (and analogously $\chi_W$) is the quadratic character attached to the tower $(V_m)$ (resp.~$(W_n)$); see Section \S \ref{subs:Witt}. Since the number $m(\pi_0)$ is completely determined in terms of the $L$-parameter of $\pi_0$ by the work of Atobe and Gan in \cite{atobe2017local}, this theorem results in explicit determination of $m(\pi)$.

The main result of this paper (see Theorem \ref{thm:lifts}) provides an explicit description of $\theta(\pi,V_m)$ in terms of the Langlands classification. If $\pi$ is a quotient of the standard representation $\chi_V\delta_r\nu^{s_r} \times \dotsm \times \chi_V\delta_1\nu^{s_1} \rtimes \pi_0$, we write $\pi = L(\chi_V\delta_r\nu^{s_r}, \dotsc, \chi_V\delta_1\nu^{s_1}; \pi_0)$. We use the same notation even when the exponents $s_r,\dotsc,s_1$ are not sorted decreasingly; in that case, it is implied that the representations are to be sorted before taking the Langlands quotient; see Section \ref{subs:reps}. For the following theorem we also set $\theta_{l}(\pi) = \theta(\pi,V_{n+\epsilon-l})$. We fix an arbitrary generic character $\chi$ (see Section \ref{subs:generic}).
\begin{Thm}
Let $\pi = L(\chi_V\delta_r\nu^{s_r}, \dotsc, \chi_V\delta_1\nu^{s_1}; \pi_0)$ be an irreducible $\chi$-generic representation of $G(W_n)$. Let $l$ be an odd integer such that $\theta_{l}(\pi) \neq 0$. Then
\[
\chi_W\delta_r\nu^{s_r} \times \dotsm \times \chi_W\delta_1\nu^{s_1} \rtimes \theta_{l}(\pi_0) \twoheadrightarrow \theta_{l}(\pi).
\]
Furthermore, if $\theta_{l}(\pi_0) = L(\chi_W\delta_k'\nu^{s_k'}, \dotsc, \chi_W\delta_1'\nu^{s_1'}; \tau)$, then $\theta_{l}(\pi)$ is uniquely determined by
\[
\theta_{l}(\pi) = L(\chi_W\delta_r\nu^{s_r}, \dotsc, \chi_W\delta_1\nu^{s_1}, \chi_W\delta_k'\nu^{s_k'}, \dotsc, \chi_W\delta_1'\nu^{s_1'}; \tau).
\]
\end{Thm}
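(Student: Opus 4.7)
The plan is to prove both assertions simultaneously by induction on the number $r$ of $\GL$-segments in the Langlands data of $\pi$. The base case $r=0$ is the Atobe-Gan description of theta lifts of tempered representations that is already available in the literature and cited in the paper. For the inductive step I would peel off one segment: writing $\pi=\chi_V\delta_r\nu^{s_r}\rtimes\pi'$ with $\pi'=L(\chi_V\delta_{r-1}\nu^{s_{r-1}},\dotsc,\chi_V\delta_1\nu^{s_1};\pi_0)$ a generic representation carrying one fewer segment, the task reduces to proving the one-step surjection $\chi_W\delta_r\nu^{s_r}\rtimes\theta_l(\pi')\twoheadrightarrow\theta_l(\pi)$ and then identifying its Langlands quotient as prescribed.

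For the one-step surjection, the natural tool is Kudla's filtration of the Jacquet module $R_P(\omega_{m,n})$ along the maximal parabolic $P$ with Levi $\GL_d(\F)\times G(W_{n-2d})$, where $d$ is the rank of $\delta_r$. Combining this with Frobenius reciprocity,
\[
\Hom_{G(W_n)}(\omega_{m,n},\pi)\cong\Hom_{M_P}\bigl(R_P(\omega_{m,n}),\,(\chi_V\delta_r\nu^{s_r})\otimes\pi'\bigr),
\]
presents the Hom space on the left as controlled by an iterated extension whose layers are $\GL$-factors tensored with smaller Weil representations $\omega_{m-2t,n-2d}$. Contracting against $(\chi_V\delta_r\nu^{s_r})\otimes\pi'$ layer by layer, most layers should vanish by cuspidal-support and central-character considerations, leaving a single surviving layer which identifies $\Theta(\pi,V_m)$ as a quotient of $\chi_W\delta_r\nu^{s_r}\rtimes\Theta(\pi',V_m)$. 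Passing to irreducible quotients via Howe duality yields the desired surjection.

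To pin down the correct Langlands quotient, I would feed in the induction hypothesis $\theta_l(\pi')=L(\chi_W\delta_{r-1}\nu^{s_{r-1}},\dotsc,\chi_W\delta_1\nu^{s_1},\chi_W\delta_k'\nu^{s_k'},\dotsc,\chi_W\delta_1'\nu^{s_1'};\tau)$ and use transitivity of parabolic induction to rewrite $\theta_l(\pi)$ as an irreducible quotient of the fully expanded induced representation. Selecting the correct one requires computing appropriate Jacquet modules of $\theta_l(\pi)$ via the same compatibility of Jacquet functors with theta lifts (Kudla's filtration on the $H(V_m)$ side), and matching the concatenated list, once sorted in decreasing order of exponents, against the Langlands classification.

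The main obstacle is controlling the non-top layers of Kudla's filtration: one must show that only the expected layer contributes to the Hom space above. This relies on the positivity $s_r>0$ coming from genericity (which places the Langlands data in standard position), the hypothesis $\theta_l(\pi)\neq 0$, and Theorem~\ref{thm:first_occ} applied to $\pi'$ to rule out premature first-occurrence contributions from deeper layers. A secondary bookkeeping task is verifying that the concatenated exponents $s_r,\dotsc,s_1,s_k',\dotsc,s_1'$, once reordered in decreasing order, still form valid Langlands data; this should follow from the positivity of all the exponents, where the $s_i$ are positive by genericity of $\pi$ and the $s_j'$ are positive by the tempered Atobe-Gan description invoked at the base of the induction.
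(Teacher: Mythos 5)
Your overall architecture --- peel off $\GL$-factors via Kudla's filtration, then identify the resulting Langlands quotient --- has the right ingredients, and the observation that Proposition~\ref{prop:Muic_isotype} kills all but the top layer of $R_{P_k}(\omega)$ under pairing with $\chi_V\delta_r\nu^{s_r}\otimes\pi'$ (because $s_r>0$ rules out the exceptional $\mathrm{St}$-twist) is correct. The paper does essentially the same thing to get \eqref{eq:epi0}, though it peels all $r$ factors simultaneously rather than one at a time.

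The critical gap is in the step you describe as ``Passing to irreducible quotients via Howe duality yields the desired surjection.'' What the filtration gives you is $\chi_W\delta_r\nu^{s_r}\rtimes\Theta_l(\pi')\twoheadrightarrow\Theta_l(\pi)$ with the \emph{full} lift $\Theta_l(\pi')$ on the left. Passing to unique irreducible quotients only tells you that $\chi_W\delta_r\nu^{s_r}\rtimes\tau'\twoheadrightarrow\theta_l(\pi)$ for \emph{some} irreducible subquotient $\tau'$ of $\Theta_l(\pi')$; it does not tell you that $\tau'\cong\theta_l(\pi')$. Since $\Theta_l(\pi')$ is in general reducible --- for tempered $\pi'$, Theorem~\ref{thm:Muic_disc} and Remark~\ref{rem:possible_subq} show it contains tempered constituents as well as Langlands quotients built from earlier occurrences --- this is not a formal deduction, and in fact identifying the correct $\tau'$ is the entire content of Section~\ref{sec:higher_lifts}: see Proposition~\ref{prop:main}, the arbitration between options (i)/(ii) of Corollary~\ref{cor:shaving}, and Lemmas~\ref{lem:main_lemma}, \ref{lemma:really_subq}, \ref{lemma:really_small_theta}, \ref{lem_descend}, \ref{lem_resolve}. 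Your proposal contains no mechanism for this step.

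There is a second, structural issue with inducting on $r$ one factor at a time: after one step you need information about $\Theta_l(\pi')$ for $\pi'$ \emph{non-tempered}, but the only available structural description of the full lift (Theorem~\ref{thm:Muic_disc} and the Atobe--Gan results) is for discrete series and tempered representations. The paper sidesteps this precisely by peeling all $\GL$-factors at once down to the tempered $\pi_0$, then further down to its discrete-series support $\pi_{00}$, where the structure of $\Theta_l$ is known. Finally, the ``bookkeeping task'' of reordering the concatenated exponents into a valid standard module is not as routine as your last paragraph suggests: adjacent segments need not commute in parabolic induction, and the rearrangement requires the Zelevinsky-theoretic results assembled in Section~\ref{sec:interlude} (Lemmas~\ref{lemma:irr_zel}, \ref{rem:swap2}, \ref{lemma_specific_reducibility}) together with the technical Lemma~\ref{lemma:unique_quotient}, which moreover splits into several cases according to the parity of $m_\phi(\chi_V)$ and $m_\phi(\chi_V S_3)$.
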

We now briefly describe the contents of this paper.

In Section \ref{sec:preli} we go over the basic notation and the results regarding the representation theory of the (quasi-split) classical $p$-adic groups. In Section \ref{sec:theta} we review the main results concerning theta correspondence in general. We also derive a number of useful corollaries (\ref{cor:theta_epi}-\ref{cor:shaving}) of Kudla's filtration (Theorem \ref{tm:Kudla}) which we use in subsequent sections. Section \ref{sec:first} contains the proof of Theorem \ref{thm:first_occ} which determines the first occurrence index. The proof relies heavily on Kudla's filtration and the standard module conjecture for classical groups (proven by Mui\'{c} in \cite{muic2001proof}) which asserts that any generic representation of a quasi-split classical group is in fact isomorphic to its standard module. In the fifth section we state our main result and prove it in some special cases. Section \ref{sec:interlude} contains a number of auxiliary technical results based on the work of Zelevinsky \cite{zelevinsky1980induced}. These results are used in Section \ref{sec:higher_lifts}, which contains the rest of the proof of Theorem \ref{thm:lifts}, providing a complete description of the lifts. Finally, in Section 8 we describe a method for constructing an interesting class of unitarizable representations, obtained by complementing Theorem \ref{thm:lifts} with the results of \cite{li1989singular} and \cite{lapid2004generic}.

The author would like to thank M.~Hanzer for the many useful discussions on the subject.

\section{Preliminaries}
\label{sec:preli}

\subsection{Groups}
\label{subs:groups}
Let $\F$ be a nonarchimedean local field of characteristic $0$ and let $|\cdot|$ be the absolute value on $\F$ (normalized as usual).

All the groups considered in this paper will be defined over $\F$. For $\epsilon = \pm 1$ fixed, we let
\[
\begin{cases}
W_n = \text{a }(-\epsilon)\text{-Hermitian space of dimension }n,\\
V_m = \text{an }\epsilon\text{-Hermitian space of dimension }m.
\end{cases}
\]
When $\epsilon = 1$, this means that $W_n$ is symplectic, whereas $V_m$ is a quadratic space. Denote by $G_n = G(W_n)$ and $H_m = H(V_m)$ the isometry groups of $W_n$ and $V_m$, respectively. Thus
\[
G(W_n) = 
\begin{cases}
\Sp(W_n) \text{ (the symplectic group)} \quad \text{if } \epsilon = 1,\\
\Ort(W_n) \text{ (the orthogonal group)} \quad \text{if } \epsilon = -1,
\end{cases}
\]
while the roles are reversed for $H(V_m)$. Furthermore, if $X$ is a vector space over $\F$, we denote by $\textnormal{GL} (X)$ the general linear group of $X$. Note that all the groups defined here are totally disconnected locally compact topological groups.

\subsection{Witt towers}
\label{subs:Witt}
Every Hermitian space $V_m$ has a Witt decomposition
\[
V_m = V_{m_0} + V_{r,r} \quad (m = m_0 + 2r),
\]
where $V_{m_0}$ is anisotropic and $V_{r,r}$ is split (i.e.~a sum of $r$ hyperbolic planes). The space $V_{m_0}$ is unique up to isomorphism, and so is the number $r \geqslant 0$, which is called the Witt index of $V_m$. The collection of spaces
\[
\mathcal{V} = \{V_{m_0} + V_{r,r} \colon r \geqslant 0\}
\]
is called a Witt tower. Since 
\[
\det(V_{m_0+2r}) = (-1)^r\det(V_{m_0}) \in \F^\times/(\F^\times)^2,
\]
the quadratic character
\[
\chi_V(x) = (x, (-1)^{\frac{m(m-1)}{2}}\det(V))_\F
\]
is the same for all the spaces $V$ in a single Witt tower (see \cite[\S V.1]{kudla1996notes}; here $(\cdot, \cdot)_\F$ denotes the Hilbert symbol).

\begin{Rem}
\label{rem:wittdim4}
In this paper we consider Witt towers of even dimension; this implies $m_0 = \dim(V_{m_0}) \in \{0,2,4\}$. However, if $\dim(V_{m_0}) = 4$, the orthogonal groups in the corresponding (so-called quaternionic) tower are not quasi-split, and thus have no generic representations (see Remark \ref{rem:quasi}). Consequently, when considering lifts of generic representations we only use this tower as a target for our theta lifts.
\end{Rem}

The symplectic spaces $W_n$ can be organized in a Witt tower in the same way. This case is somewhat simpler: since the only anisotropic symplectic space is the trivial one, there is only one tower of symplectic spaces. The corresponding character $\chi_W$ is trivial.

\subsection{Parabolic subgroups}
\label{subs:parabolic}
Let $V_m$ be a $\epsilon$-Hermitian space of dimension $m$ such that $m_0 \leq 2$ (cf.~Remark \ref{rem:wittdim4}). We may choose a subset $\{v_1, \dotsc, v_r, v_1',\dotsc, v_r'\}$ of $V_{m}$ such that $(v_i,v_j) = (v_i',v_j')=0$ and $(v_i,v_j') = \delta_{ij}$. Here $r$ denotes the Witt index of $V_m$ unless $V_{m}$ is a split quadratic space; in that case we set $2r= m-2$. We let $B=TU$ denote the standard $\F$-rational Borel subgroup of $H(V_m)$, i.e.~the subgroup of $H(V_m)$ stabilizing the flag
\[
0 \subset \text{span}\{v_1\} \subset \text{span}\{v_1,v_2\}\subset \dotsb \subset \text{span}\{v_1,v_2\dotsc,v_r\}.
\]
Furthermore, for any $t \leq r$ we set $U_t = \repspan\{v_1,\dotsc, v_t\}$ and $U_t' = \repspan\{v_1',\dotsc, v_t'\}$; we can then decompose
\[
V_m = U_t \oplus V_{m-2t} \oplus U_t'
\]
The subgroup $Q_t$ of $H(V_m)$ which stabilizes $U_t$ is a maximal parabolic subgroup of $H(V_m)$; it has a Levi decomposition $Q_t = M_tN_t$, where $M_t = \textnormal{GL}(U_t) \times H(V_{m-2t})$ is the Levi component, i.e. the subgroup of $Q_t$ which stabilizes $U_t'$ (here we often identify $\GL(U_t)$ with $ \GL_t(\F)$).

By letting $t$ vary, we obtain a set $\{Q_t: t \in\{1,\dotsc,r\}\}$ of standard maximal parabolic subgroups. By further partitioning $t$, we get the rest of the standard parabolic subgroups---generally, the Levi factor of a standard parabolic subgroup is of the form
\[
\GL_{t_1}(\F) \times \dotsm \times \GL_{t_k}(\F) \times H(V_{m-2t})  \quad (t = t_1 +\dotsb + t_k).
\]
We denote the maximal standard parabolic subgroups of $G(W_n)$ and $H(V_m)$ by $P_t$ and $Q_t$, respectively.

\subsection{Representations}
\label{subs:reps}
Let $G = G(W_n)$ be one of the groups described in \S\ref{subs:groups}. By a representation of $G$ we mean a pair $(\pi,V)$ where $V$ is a complex vector space and $\pi$ is a homomorphism $G \to \GL(V)$. With $V_{\infty}$ we denote the subspace of $V$ comprised of all the smooth vectors, i.e.~those having an open stabilizer in $G$.
If $V = V_{\infty}$, we say that the representation $(\pi,V)$ is smooth. Unless otherwise stated, we will assume that all the representations are smooth; the category of all smooth complex representations of $G$ will be denoted by $\mathcal{A}(G)$. The set of equivalence classes of irreducible representations of $G$ will be denoted by $\Irr(G)$.

For each parabolic subgroup $P=MN$ of $G$ we have the (normalized) induction and localization (Jacquet) functors, $\Ind_P^G\colon \mathcal{A}(M) \to \mathcal{A}(G)$ and $R_P \colon \mathcal{A}(G) \to \mathcal{A}(M)$. These are connected by the standard Frobenius reciprocity
\[
\Hom_G(\pi, \Ind_P^G(\pi')) \cong \Hom_M(R_P(\pi), \pi')
\]
and by the second (Bernstein) form of Frobenius reciprocity,
\[
\Hom_G(\Ind_P^G(\pi'), \pi) \cong \Hom_M(\pi', R_{\overline{P}}(\pi))
\]
(here $\overline{P} = M\overline{N}$ is the parabolic subgroup opposite to $P$).

If $P=MN$ is a parabolic subgroup of $G(W_n)$ with Levi factor $M = \GL_{t_1}(\F) \times \dotsm \times \GL_{t_k}(\F) \times G(W_{n-2t})$, we write
\[
\tau_1 \times \dotsm \times \tau_k \rtimes \pi_0
\]
for $\Ind_P^G(\tau_1 \otimes \dotsm \otimes \tau_k \otimes \pi_0)$, where $\tau_i$ is a representation of $\GL_{t_i}(\F)$ and $\pi_0$ is a representation of $G(W_{n-2t})$ (with $t = t_1 +\dotsb+t_k$).

To obtain a complete list of irreducible representations of $G(W_n)$, we use the Langlands classification: let $\delta_i \in \GL_{t_i}(\F), i = 1,\dotsc, r$ be irreducible discrete series representations, and let $\tau$ be an irreducible tempered representation of $G(W_{n-2t})$ (for $t=t_1+\dotsb+t_r$). Any representation of the form
\[
\nu^{s_r}\delta_r \times \dotsb \times \nu^{s_1}\delta_1 \rtimes \tau,
\]
where $s_r \geqslant \dotsb \geqslant s_1 > 0$ (and where $\nu$ denotes the character $\lvert\det\rvert$ of the corresponding general linear group) is called a standard representation (or a standard module). It possesses a unique irreducible quotient, the so-called Langlands quotient, denoted by $L(\nu^{s_r}\delta_r \times \dotsb \times \nu^{s_1}\delta_1 \rtimes \tau)$. Occasionally, we will also write $L(\nu^{s_r}\delta_r, \dotsc, \nu^{s_1}\delta_1; \tau)$, implying that the representations $\{\nu^{s_r}\delta_r, \dotsc, \nu^{s_1}\delta_1\}$ are to be sorted decreasingly with respect to $s_i$'s before taking the quotient. Conversely, every irreducible representation can be represented as the Langlands quotient of a unique standard representation. In this way, we obtain a complete description of $\Irr(G(W_n))$.

We will use this (quotient) form of the Langlands classification interchangeably with the subrepresentation form, by means of the following lemma \cite[Lemma 2.2]{atobe2017local}.

\begin{Lem}
\label{lemma:MVWinv}
Let $P$ be a standard parabolic subgroup of $G(W_n)$ with Levi component equal to $\GL_{t_1}(\F) \times \dotsm \times \GL_{t_r}(\F) \rtimes G(W_{n_0})$. Then, for $\tau_i \in \Irr(\GL_{t_i}(\F))$, $\pi_0 \in \Irr(G(W_{n_0}))$ and $\pi \in \Irr(G(W_{n}))$ the following statements are equivalent:
\begin{enumerate}[(i)]
\item $\pi \hookrightarrow \tau_1 \times \dotsm \times \tau_r \rtimes \pi_0$; 
\item $\tau_1^\vee \times \dotsm \times \tau_r^\vee \rtimes \pi_0 \twoheadrightarrow \pi$.
\end{enumerate}
\end{Lem}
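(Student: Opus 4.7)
The plan is to realize conditions (i) and (ii) as non-vanishing of naturally isomorphic Hom-spaces, and then conclude by the irreducibility of $\pi$.

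First I would apply Frobenius reciprocity in both its forms, as stated in \S\ref{subs:reps}. The standard form turns (i) into
\[
\Hom_M\bigl(R_P(\pi),\, \tau_1 \otimes \dotsm \otimes \tau_r \otimes \pi_0\bigr) \neq 0,
\]
and the second (Bernstein) form turns (ii) into
\[
\Hom_M\bigl(\tau_1^\vee \otimes \dotsm \otimes \tau_r^\vee \otimes \pi_0,\, R_{\overline{P}}(\pi)\bigr) \neq 0.
\]
Since $\pi$, $\pi_0$ and each $\tau_i$ are irreducible, these non-vanishing conditions exactly encode (i) and (ii).

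The crucial ingredient is the Moeglin--Vigneras--Waldspurger (MVW) theorem: for every irreducible smooth representation $\sigma$ of the symplectic group $\Sp(W_n)$ or the full orthogonal group $\Ort(V_m)$ over $\F$, the contragredient satisfies $\sigma^\vee \cong \sigma$. Combined with the standard Casselman-type compatibility $R_P(\sigma^\vee) \cong \bigl(R_{\overline{P}}(\sigma)\bigr)^\vee$, applying MVW to $\pi$ yields $R_{\overline{P}}(\pi) \cong \bigl(R_P(\pi)\bigr)^\vee$, while applying it to $\pi_0$ yields $\pi_0^\vee \cong \pi_0$.

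Next I would massage the Hom-space from (ii) using these identifications together with the usual admissible duality $\Hom(A, B^\vee) \cong \Hom(B, A^\vee)$:
\begin{align*}
\Hom_M\bigl(\tau_1^\vee \otimes \dotsm \otimes \tau_r^\vee \otimes \pi_0,\, R_{\overline{P}}(\pi)\bigr)
&\cong \Hom_M\bigl(\tau_1^\vee \otimes \dotsm \otimes \tau_r^\vee \otimes \pi_0,\, R_P(\pi)^\vee\bigr) \\
&\cong \Hom_M\bigl(R_P(\pi),\, \tau_1 \otimes \dotsm \otimes \tau_r \otimes \pi_0^\vee\bigr) \\
&\cong \Hom_M\bigl(R_P(\pi),\, \tau_1 \otimes \dotsm \otimes \tau_r \otimes \pi_0\bigr).
\end{align*}
The last expression is precisely the Hom-space from (i), so non-vanishing of either side is equivalent to the corresponding condition, establishing (i)$\,\Leftrightarrow\,$(ii).

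The main subtlety I expect is ensuring that MVW is invoked in the right form: the isomorphism $\sigma^\vee \cong \sigma$ requires working with the full orthogonal group $\Ort(V_m)$ (not merely $\SO(V_m)$), and one must verify that it applies to both $\pi \in \Irr(G(W_n))$ and $\pi_0 \in \Irr(G(W_{n_0}))$. In our setting both are classical groups of the appropriate type, so MVW applies verbatim; the Jacquet-contragredient compatibility is classical and found in standard references.
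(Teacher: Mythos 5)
The paper does not reproduce a proof of this lemma (it cites \cite[Lemma 2.2]{atobe2017local}, which in turn rests on the MVW involution), so I am comparing against that source. Your argument is correct and rests on the same essential ingredient: the MVW theorem that every irreducible smooth representation of $\Sp(W_n)$ and of the full orthogonal group $\Ort(V_m)$ is self-dual, applied to both $\pi$ and $\pi_0$. Your care in noting that this is specific to the full orthogonal group (it fails for $\SO$) is exactly the right thing to flag. One small remark on packaging: the route through Jacquet modules and the two Frobenius reciprocities, plus the Casselman compatibility $R_P(\sigma^\vee)\cong R_{\overline{P}}(\sigma)^\vee$ and the admissible duality of $\Hom$-spaces, is valid but somewhat more roundabout than necessary. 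The shorter path, which is what the cited source effectively uses, is to apply the contragredient directly to (i): $\pi\hookrightarrow \tau_1\times\dotsm\times\tau_r\rtimes\pi_0$ is equivalent to $\tau_1^\vee\times\dotsm\times\tau_r^\vee\rtimes\pi_0^\vee\twoheadrightarrow\pi^\vee$ via the standard isomorphism $(\Ind_P^G\sigma)^\vee\cong\Ind_P^G(\sigma^\vee)$ for normalized induction, and then MVW replaces $\pi^\vee$ by $\pi$ and $\pi_0^\vee$ by $\pi_0$. Both arguments prove the lemma; yours trades the contragredient-of-induction identity for the Jacquet-module compatibility, so the logical content is the same.
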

Here $\tau^\vee$ denotes the contragredient representation. When dealing with tempered representations, we will often need the following two lemmas. The first one is due to Harish-Chandra (see Lemma 2.1 \cite{muic2008theta}):
\begin{Lem}
\label{lemma:temp_supp}
Let $\pi_0 \in \Irr(G(W_n))$ be a tempered representation. Then there exist irreducible discrete series representations $\delta_1, \dotsc, \delta_k, \pi_{00}$ such that $\pi_{0} \hookrightarrow \delta_1 \times \dotsm \times \delta_k \rtimes \pi_{00}$. If $\delta'_1, \dotsc, \delta'_{k'}, \pi'_{00}$ is another sequence of discrete series representations such that $\pi_{0} \hookrightarrow \delta'_1 \times \dotsm \times \delta'_{k'} \rtimes \pi'_{00}$, then $k'=k$, $\pi'_{00} = \pi_{00}$ and the sequence $\delta'_1, \dotsc, \delta'_k$ is obtained from $\delta_1, \dotsc, \delta_k$ by permuting the terms and replacing some of them with their contragredients. The multiset $\{\delta_1, \dotsc, \delta_k, \delta_1^\vee, \dotsc, \delta_k^\vee, \pi_{00}\}$ is called the tempered support of $\pi_0$.
\end{Lem}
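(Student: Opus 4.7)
The plan is to establish existence and uniqueness separately. For existence, I would invoke the Harish-Chandra structure theorem for irreducible tempered representations of reductive $p$-adic groups (in the $p$-adic setting due to Waldspurger): every such $\pi_0$ occurs as a direct summand — hence in particular as a subrepresentation — of $\Ind_P^{G}(\sigma)$ for some standard parabolic $P = MN$ of $G = G(W_n)$ and some discrete series $\sigma$ of $M$. Since every standard Levi of $G(W_n)$ has the form $M = \GL_{t_1}(\F) \times \dotsm \times \GL_{t_k}(\F) \times G(W_{n_0})$ (see \S\ref{subs:parabolic}), any discrete series of $M$ factors as $\sigma = \delta_1 \otimes \dotsm \otimes \delta_k \otimes \pi_{00}$ with each $\delta_i$ and $\pi_{00}$ in discrete series, yielding the desired embedding $\pi_0 \hookrightarrow \delta_1 \times \dotsm \times \delta_k \rtimes \pi_{00}$.

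For uniqueness, suppose also $\pi_0 \hookrightarrow \delta'_1 \times \dotsm \times \delta'_{k'} \rtimes \pi'_{00}$. Since the cuspidal support of an irreducible admissible representation is unique, comparing the cuspidal supports of the two embeddings immediately forces the two Levi subgroups to be $G$-conjugate, pins down $\pi_{00} = \pi'_{00}$ on the classical piece, and gives $k = k'$. To identify the $\delta'_j$'s with the $\delta_i$'s up to permutation and contragredient, I would apply Lemma \ref{lemma:MVWinv} to convert the second embedding into a surjection $(\delta'_1)^\vee \times \dotsm \times (\delta'_k)^\vee \rtimes \pi'_{00} \twoheadrightarrow \pi_0$, and then use Frobenius reciprocity on both embeddings to deduce that $\delta_1 \otimes \dotsm \otimes \delta_k \otimes \pi_{00}$ appears as a subquotient of $R_P\bigl(\delta'_1 \times \dotsm \times \delta'_k \rtimes \pi'_{00}\bigr)$, where $P$ is the standard parabolic with Levi $M$. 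The geometric lemma of Bernstein-Zelevinsky-Casselman then decomposes this Jacquet module by a filtration indexed by double cosets in the relative Weyl group $W(G, M)$. For classical groups, $W(G, M)$ permutes $\GL$-blocks of equal size and contains the "inverse-transpose" automorphism on each $\GL$-factor, which realizes the contragredient at the level of representations. Matching the resulting subquotient to $\delta_1 \otimes \dotsm \otimes \delta_k \otimes \pi_{00}$, together with the rigidity of discrete series under twisting, then forces the $\delta'_j$'s to be obtained from the $\delta_i$'s by a permutation combined with passage to contragredient on some subset of factors — which is exactly the freedom encoded in the statement.

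The main obstacle is the bookkeeping inside the geometric lemma: one must verify that the only Weyl elements contributing a subquotient with the required cuspidal support are those preserving the block decomposition "$\GL$-pieces against $\GL$-pieces, classical piece against classical piece". Any Weyl element that would mix a $\GL$-block with the discrete series $\pi'_{00}$ of the classical factor produces twists by nontrivial powers of $\nu$ on the resulting $\GL$-constituents, and these twisted segments are incompatible with the discrete-series exponent bounds (Casselman's square-integrability criterion) satisfied by the $\delta_i$'s. Ruling these contributions out rigorously requires careful tracking of how the Weyl group acts on cuspidal data, but once done, the identification of multisets is forced.
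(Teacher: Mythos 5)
The paper gives no proof of this lemma --- it simply attributes the existence statement to Harish-Chandra and cites Lemma~2.1 of Mui\'c \cite{muic2008theta} (noting that the result extends to the non-connected $\Ort(V)$ case). Your overall plan --- Harish-Chandra's structure theorem for tempered representations for existence, then Frobenius reciprocity and the geometric lemma for uniqueness --- is the right one and is essentially the argument in the cited reference.

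There is, however, a genuine gap in your uniqueness step. You assert that ``since the cuspidal support of an irreducible admissible representation is unique, comparing the cuspidal supports of the two embeddings immediately forces the two Levi subgroups to be $G$-conjugate, pins down $\pi_{00} = \pi'_{00}$ on the classical piece, and gives $k = k'$.'' This is not correct. The cuspidal support of $\pi_0$ lives on a Levi $L$ that is in general strictly smaller than $M = \GL_{t_1} \times \dotsm \times \GL_{t_k} \times G(W_{n_0})$, since the $\delta_i$ and $\pi_{00}$ need not be cuspidal. Knowing that $L$ and the cuspidal datum $\rho$ on it are conjugate in $G$ does \emph{not} determine the intermediate Levi $M$ up to conjugacy, does not give $n_0 = n_0'$, and in particular does not give $\pi_{00} = \pi_{00}'$: discrete series representations of the classical $p$-adic groups $G(W_{n_0})$ are \emph{not} determined by their cuspidal support (this is visible already in the M\oe glin--Tadi\'c classification, where several non-isomorphic discrete series share the same cuspidal support). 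So the cuspidal-support comparison cannot be used as the mechanism that fixes $\pi_{00}$, $k$, and the Levi.

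The good news is that your subsequent argument --- passing to $R_{P'}(\delta_1 \times \dotsm \times \delta_k \rtimes \pi_{00})$ via Frobenius reciprocity, applying the geometric lemma, and using Casselman's square-integrability criterion to kill off all Weyl-group contributions that would break a discrete series into a non-tempered piece or mix $\GL$-blocks with the classical factor --- when carried out carefully, proves \emph{all} of the uniqueness claims simultaneously, including $k = k'$ and $\pi_{00} \cong \pi_{00}'$. So the fix is simply to drop the cuspidal-support shortcut and let the geometric-lemma/Jacquet-module analysis carry the full weight of the uniqueness argument.
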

The second lemma we need is a result of Goldberg (\cite{goldberg1994reducibility}, Theorems 6.4 and 6.5)
\begin{Lem}
\label{lemma:goldberg}
Let $\delta_1, \dotsc, \delta_k, \pi_{00}$ be a sequence of discrete series representations. Then the induced representation  $\delta_1 \times \dotsm \times \delta_k \rtimes \pi_0$ is a direct sum of mutually non-isomorphic tempered representations. It is of length $2^L$ where $L$ is the number of non-equivalent $\delta_i$ such that $\delta_i \rtimes \pi_{00}$ reduces.
\end{Lem}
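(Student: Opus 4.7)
The plan is to invoke the Knapp--Stein R-group machinery together with Harish-Chandra's commuting algebra theorem. Writing $\sigma = \delta_1 \otimes \dotsm \otimes \delta_k \otimes \pi_{00}$ as a representation of the appropriate Levi subgroup $M$ of $G = G(W_n)$, and $\Pi = \delta_1 \times \dotsm \times \delta_k \rtimes \pi_{00} = \Ind_P^G(\sigma)$, Harish-Chandra's theorem gives an isomorphism $\End_G(\Pi) \cong \C[\mathcal{R}, \eta]$ for the Knapp--Stein R-group $\mathcal{R} = \mathcal{R}(\sigma)$ and some $2$-cocycle $\eta$. The claim then reduces to computing $\mathcal{R}$ explicitly and verifying that $\eta$ is trivial.

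For the computation of $\mathcal{R}$, I would follow Goldberg. Let $W(\sigma) = \{w \in N_G(M)/M : w\sigma \cong \sigma\}$; its elements act on $\sigma$ by permuting the $\GL$-factors (possibly composed with dualization). Then $\mathcal{R}$ is the quotient of $W(\sigma)$ by the subgroup generated by reflections $s_\alpha$ whose Plancherel measure $\mu_\alpha(\sigma)$ is regular at $s=0$. Three types of roots must be analyzed: (a) roots swapping two $\GL$-factors, which lie in $W(\sigma)$ only when the factors are isomorphic, but whose Plancherel measures are then regular, so they are killed in the quotient; (b) roots inverting a single $\delta_i$, which lie in $W(\sigma)$ exactly when $\delta_i$ is self-dual, i.e.\ $\delta_i \cong \delta_i^\vee$; and (c) the "classical-type" roots, whose Plancherel measures have a pole at $s=0$ if and only if $\delta_i \rtimes \pi_{00}$ reduces. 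Consequently, each self-dual $\delta_i$ for which $\delta_i \rtimes \pi_{00}$ reduces contributes a $\Z/2\Z$-factor to $\mathcal{R}$; inequivalent such $\delta_i$'s contribute independent factors, yielding $\mathcal{R} \cong (\Z/2\Z)^L$.

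For the triviality of the cocycle $\eta$, I would appeal to Keys, whose results imply that $\eta$ is trivial for the classical $p$-adic groups considered here. This yields $\End_G(\Pi) \cong \C[(\Z/2\Z)^L]$, a commutative semisimple algebra of dimension $2^L$. Commutativity forces $\Pi$ to be multiplicity-free, and the dimension $2^L$ of the intertwining algebra forces $\Pi$ to decompose into exactly $2^L$ mutually non-isomorphic irreducible summands, indexed by the characters of $\mathcal{R}$. These summands are tempered as they appear in an induction from a discrete series on a Levi.

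The main obstacle will be the precise bookkeeping in steps (a)--(c) when several $\delta_i$ coincide: one must carefully separate the "swap" reflections (which lie in the kernel of $W(\sigma) \twoheadrightarrow \mathcal{R}$) from the "sign-change" reflections (which survive), and verify that the $L$ surviving generators are genuinely independent. The key simplifying observation is that the reducibility of $\delta_i \rtimes \pi_{00}$ depends only on the isomorphism class of $\delta_i$, so each inequivalent self-dual class with reducible induction contributes at most one generator of $\mathcal{R}$, and distinct such classes act on disjoint blocks of $M$, making their contributions commute and remain independent. This matches exactly the combinatorial definition of $L$ in the statement.
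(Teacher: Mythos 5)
The paper does not supply a proof of this lemma: it quotes Goldberg's Theorems 6.4 and 6.5 directly, remarking afterward that the result extends from the connected case to $\Ort(V)$ via Mui\'{c}'s work. Your proposal is therefore, in effect, a reconstruction of Goldberg's argument --- the route via the Knapp--Stein $R$-group, Harish-Chandra's commuting algebra theorem, and the triviality of the $2$-cocycle is precisely the one Goldberg takes, so the overall strategy is sound and matches the intended source.

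A few inaccuracies in the sketch are worth flagging, though they do not change the outcome. The criterion for a reflection $s_\alpha$ to lie in $W'(\sigma)$ (hence die in $\mathcal{R}$) is that the Plancherel measure $\mu_\alpha(\sigma)$ \emph{vanishes} at the unitary point; it is always regular there for discrete series inducing data, so ``regular'' is not a discriminating condition, and it does not ``have a pole'' in the reducible case either (that is a property of the unnormalized intertwining operator, not of the Plancherel density). Your items (b) and (c) describe the same reflections: a sign change $c_i$ lies in $W(\sigma)$ iff $\delta_i$ is self-dual, and it survives into $\mathcal{R}$ iff $\delta_i\rtimes\pi_{00}$ reduces --- two conditions on one family of roots, not two families. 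For the swap roots one must in addition treat the swap-with-dualization elements (requiring $\delta_i\cong\delta_j^\vee$), which are likewise absorbed into $W'(\sigma)$ because $\delta_i\times\delta_j^\vee$ is irreducible.

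The one genuine gap is the non-connected case $H(V_m)=\Ort(V_m)$, to which the lemma is applied throughout the paper. The Harish-Chandra--Silberger formalism you invoke, and Goldberg's theorems themselves, are set up for connected reductive groups, so the argument as written only yields the decomposition over $\Sp(W_n)$ and $\SO(V_m)$. To cover $\Ort(V_m)$ one must propagate the result by restriction and induction between $\SO(V_m)$ and $\Ort(V_m)$, as the paper notes (citing Mui\'{c}'s Lemmas 2.1 and 2.3). Your proposal does not address this step, so it stops short of the full scope of the statement.
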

Both results are originally stated for connected groups, but can easily be extended to the non-connected case of $\Ort(V)$ (see e.g.~\cite{muic2008theta}, Lemma 2.1 and 2.3).

\subsection{Computing Jacquet modules}
\label{subs:computingJM}
We need to compute the Jacquet modules of various representations on a number of occasions. We let $R_n, n\geq 0$ denote the Grothendieck group of admissible representations of $\GL_n(\F)$ of finite length; we also set $
R = \oplus_{n\geq 0} R_n$.

For $\pi_1 \in \Irr(\GL_{n_1}(\F))$ and $\pi_2 \in \Irr(\GL_{n_2}(\F))$ the pairing
\[
(\pi_1, \pi_2) \mapsto \pi_1 \times \pi_2
\]
defines an additive mapping $\times \colon R_{n_1} \times R_{n_2} \to R_{n_1+n_2}$. We extend the mapping $\times$ to a multiplication on $R$ in a natural way.

On the other hand, for any $\pi \in \Irr(\GL_n(\F))$ we may identify $R_{P_k}(\pi)$ with its semi-simplification in $R_k \otimes R_{n-k} \hookrightarrow R\otimes R$ (here $P_k$ temporarily denotes the $k$-th maximal standard parabolic subgroup of $\GL_n(\F)$). We define
\[
m^*(\pi) = (\mathbb{1}\otimes \pi) \oplus (\pi \otimes \mathbb{1})  \oplus  \sum_{k=1}^{n-1}R_{P_{k}}(\pi) \in R\otimes R
\]
and extend $m^*$ to an additive map $R \to R\otimes R$. The basic fact due to Zelevinsky (see Section 1.7 of \cite{zelevinsky1980induced} for additional details) is that
\begin{equation}
\label{eq_Hopf}
m^*(\pi_1 \times \pi_2) = m^*(\pi_1) \times m^*(\pi_2).
\end{equation}
In most cases, we will consider $m^*(\pi)$ when $\pi = \langle \nu^b\rho, \nu^a\rho\rangle$, i.e.~the discrete series representation attached to the segment $[\nu^a\rho, \nu^b\rho]$, or $\pi = \langle \nu^a\rho, \nu^b\rho\rangle$, i.e.~the Langlands quotient of $\nu^b\rho \times \dotsm \times \nu^a\rho$, where $b-a \in \mathbb{Z}_{\geq 0}$ (see Section \ref{sec:interlude} for notation). In those cases, we have
\begin{align}
m^*(\langle \nu^b\rho, \nu^a\rho\rangle) &= \sum_{i=a-1}^b \langle \nu^b\rho, \nu^{i+1}\rho\rangle \otimes \langle \nu^i\rho, \nu^a\rho\rangle,\label{eq_JMdelta}\\
m^*(\langle \nu^a\rho, \nu^b\rho\rangle) &= \sum_{i=a-1}^b \langle \nu^a\rho, \nu^i\rho\rangle \otimes \langle \nu^{i+1}\rho, \nu^b\rho\rangle.\label{eq_JMzeta}
\end{align}
It is important to note that in \eqref{eq_JMdelta} (resp.~\eqref{eq_JMzeta}) we set $\langle \nu^{c-1}\rho, \nu^c\rho\rangle = \mathbb{1} \in \Irr(\GL_0(\F))$ (resp.$\langle \nu^{c}\rho, \nu^{c-1}\rho\rangle = \mathbb{1}$) for any $c\in \mathbb{R}$.

This theory was extended by Tadi\'{c} to the case of classical groups in \cite{tadic1995structure}. For any $\pi \in \Irr(G_n)$ we let $\mu^*(\pi)$ be the sum of the semi-simplifications of $R_P(\pi)$ when $P$ varies over the set of maximal standard parabolic subgroups of $G_n$ as well as $P=G_n$. The relevant formula is now
\begin{equation}
\label{eq_Tadic}
\mu^*(\delta \rtimes \pi) = M^*(\delta) \rtimes \mu^*(\pi).
\end{equation}
The definition of $M^*$ can be found in \cite[Theorem 5.4]{tadic1995structure}, but we shall need it here only in the special case when $\delta$ is essentially square integrable representation $\langle \nu^{k}\rho, \rho\rangle$ for $k\geq 0$; in this case, we have (\cite[\S 14]{tadic2012reducibility})
\begin{equation}
\label{eq_M}
M^*(\langle \nu^{k}\rho, \rho\rangle)= \sum_{i=-1}^k\sum_{j=i}^k \langle\rho^\vee, \nu^{-i}\rho^\vee\rangle \times \langle \nu^{k}\rho, \nu^{j+1}\rho\rangle  \otimes \langle \nu^{j}\rho, \nu^{i+1}\rho\rangle .
\end{equation}

\subsection{Local Langlands Correspondence}
\label{subs:LLC}
Another way of classifying the irreducible representations of $G(W_n)$ is by means of the Local Langlands Correspondence (LLC). We use it mainly to harvest the results on lifts of tempered representations established recently by Atobe and Gan in \cite{atobe2017local}. Without going into detail, we give a brief description of the basic features of LLC; a concise overview of the theory along with the key references can be found in appendices A and B of \cite{atobe2017local}.

The LLC parametrizes $\Irr(G(W_n))$ by representations of the Weil-Deligne group, $\mathrm{WD}_\F = \mathrm{W}_\F \times \SL_2(\C)$ (here $W_\F$ denotes the Weil group of $\F$). More precisely, we define $\Phi(G(W_n))$, for any even $n$, as a set of equivalence classes:
\[
\begin{cases}
\Phi(\Sp(W_n)) &= \{\phi \colon \mathrm{WD}_\F \to \SO(n+1,\C)\} / \cong,\\
\Phi(\Ort(W_n)) &= \{\phi \colon \mathrm{WD}_\F \to \Ort(n,\C) | \det(\phi) = \chi_W\} / \cong.
\end{cases}
\]
The irreducible representations of $G(W_n)$ are then parametrized by the so-called $L$-pa\-ra\-me\-ters, i.e. pairs of the form $(\phi,\eta)$, where $\phi \in \Phi(G(W_n))$, and $\eta$ is a character of the (finite) component group of the centralizer of $\text{Im}(\phi)$. The set of representations which correspond to the same $\phi$ is called the $L$-packet attached to $\phi$. 

Any $\phi \in \Phi(G(W_n))$ can be decomposed as 
\[
\phi = \bigoplus_{n \geqslant 1} \phi_n \otimes S_n, 
\]
where $\phi_n$ is a representation of $W_\F$, whereas $S_n$ denotes the unique algebraic representation of $\SL_2(\C)$ of dimension $n$. Tempered representations are parametrized by pairs $(\phi,\eta)$ in which $\phi(W_\F)$ is bounded; among those, the multiplicity free parameters correspond to discrete series representations.

Note that, unlike $\phi$, the choice of $\eta$ is non-canonical: it depends on the choice of a Whittaker datum of $G(W_n)$. This choice will be fixed like in Remark B.2 of \cite{atobe2017local}.

\subsection{Generic representations}
\label{subs:generic}
To define generic representations, we assume that $H(V_m)$ is quasi-split, i.e.~that it has a Borel subgroup defined over $\F$.
\begin{Rem}
\label{rem:quasi}
The isometry groups introduced in Section \ref{subs:groups} are all quasi-split, with the exception of $\Ort(V_m)$ when the anisotropic subspace $V_{m_0}$ is of dimension $4$.
\end{Rem}
We retain the notation of Section \ref{subs:parabolic}; in particular, we let $B = TU$ be the standard Borel subgroup of $H$ as fixed in that section. We now fix a non-trivial additive character $\psi$ of $\F$. If $H(V_m) = \Sp(V_m)$ we may now define a generic character $\chi\colon U \to \F$ by choosing an element $c\in \F^\times$ and setting
\[
\chi_c(u) = \psi(\langle uv_2,v_1' \rangle_V + \dotsb + \langle uv_m,v_{m-1}' \rangle_V + c\langle uv_m',v_m' \rangle_V)
\]
where $\langle \cdot,\cdot \rangle_V$ denotes the bilinear form on $V_m$. The map $c\mapsto \chi_c$ establishes a bijection
\[
\F^\times/{\F^\times}^2 \to \{T\text{-orbits of generic characters of }U\}.
\]
We point out that this bijection depends on our choice of $\psi$ in the symplectic case. We say that a representation $(\pi,V)$ of $\Sp(V_m)$ is $\chi$-generic if there is a non-trivial linear functional $l_\pi \colon V \to \C$ such that
\[
l_\pi(\pi(u)v) = \chi(u)l_\pi(v).
\]
for all $v \in V$ and $u\in U$. When $H(V_m)=\Ort(V_m)$, we define $\chi$-generic representations in a similar manner; see \cite{Atobe2018} for additional details. From now on, we fix an arbitrary generic character $\chi$. 

The following theorem contains the most important properties of $\chi$-generic representations which we often use. The first two (established by F.~Rodier \cite{rodier1973whittaker}) are known as the heredity and the uniqueness of the Whittaker model, respectively. The third one is the standard module conjecture, established by G.~Mui\'{c} in \cite{muic2001proof}.
\begin{Thm}
\label{thm:gen_properties}
\begin{enumerate}[(i)]
\item If $\tau_i \in \Irr(\GL_{t_i}(\F)), i=1,\dotsc,r$ are irreducible generic representations, and $\pi_0$ is an irreducible representation of $H(V_m)$, then 	$\tau_1 \times \dotsm \times \tau_k \rtimes \pi_0$ is $\chi$-generic if and only if $\tau_1 \otimes \dotsm \otimes \tau_k \otimes \pi_0$ is $\chi$-generic.
\item If $H(V_m) = \Sp(V_m)$, and $\pi_0, \tau_i \in \Irr(\GL_{t_i}(\F)), i=1,\dotsc,r$ are irreducible $\chi$-generic representations, then $\tau_1 \times \dotsm \times \tau_k \rtimes \pi_0$ contains a unique irreducible $\chi$-generic subquotient, which has multiplicity one.
\item The standard module of any irreducible generic representation of $H(V_m)$ is itself irreducible.
\end{enumerate}
\end{Thm}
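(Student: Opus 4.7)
The three statements are classical results whose proofs I would assemble from the literature rather than reprove from scratch, but I can outline the strategy behind each.

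For (i), the heredity statement, the plan is to follow Rodier's approach via the geometric lemma. Let $P = MN$ be the standard parabolic with Levi $M = \GL_{t_1}(\F)\times\dotsm\times\GL_{t_k}(\F)\times H(V_{m_0})$, and set $\sigma = \tau_1\otimes\dotsm\otimes\tau_k\otimes\pi_0$. One computes the twisted Jacquet module $\Hom_U(\Ind_P^H\sigma,\chi)$ by filtering $\Ind_P^H\sigma$ according to the double coset decomposition $P\backslash H/U$. The key combinatorial step is to verify that only the open (``big cell'') double coset supports a nonzero $(U,\chi)$-equivariant functional, while all smaller cells are killed because the restriction of $\chi$ to the relevant unipotent subgroups fails to match the character appearing in the filtration. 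The surviving term gives a canonical isomorphism $\Hom_U(\Ind_P^H\sigma,\chi)\cong \Hom_{U\cap M}(\sigma,\chi|_{U\cap M})$, from which (i) follows.

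For (ii), once (i) is in hand, the existence and multiplicity-one statement reduce to the uniqueness of the Whittaker model for the Levi factor. On the $\GL$-factors this is Shalika's theorem, and on $H(V_{m_0})$ it is Rodier's theorem for quasi-split classical groups. The multiplicity-one statement in the full induced representation then follows because the functor $\pi\mapsto\Hom_U(\pi,\chi)$ is exact, and because at most one irreducible subquotient can carry the one-dimensional Whittaker functional supplied by the heredity isomorphism above.

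Part (iii) is the hard part and is the content of Mui\'c's \cite{muic2001proof}. The strategy I would follow is the Langlands--Shahidi / Casselman--Shahidi approach: write the given generic $\pi$ as the Langlands quotient of its standard module $\sigma = \nu^{s_r}\delta_r\times\dotsm\times\nu^{s_1}\delta_1\rtimes\tau$ with $s_r\geq\dotsb\geq s_1>0$ and $\tau$ tempered. By part (i), $\sigma$ is $\chi$-generic, so its Whittaker functional is inherited from that of $\pi$. The goal is then to prove that the standard long intertwining operator $M(\sigma)$ from $\sigma$ to the opposite induced representation is an isomorphism, which forces $\sigma$ to be irreducible and hence equal to $\pi$. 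One decomposes $M(\sigma)$ into a product of rank-one intertwining operators and controls each factor via the Shahidi local coefficient, which expresses the normalizing factor in terms of local $L$-functions and $\gamma$-factors. The positivity of the exponents $s_i$ guarantees that the relevant $L$-functions have no poles, so none of the rank-one operators vanishes on the Whittaker functional; combined with the fact that the whole operator is regular for $\sigma$ in the Langlands situation, this yields the required bijectivity. The main obstacle—and the genuine content of Mui\'c's paper—is handling the non-generic discrete series factors $\tau$ on classical groups and keeping track of the reducibility points for the rank-one restrictions; I would cite \cite{muic2001proof} for this technical core rather than reproducing it.
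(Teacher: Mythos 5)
The paper does not prove this theorem: parts (i) and (ii) are attributed to Rodier \cite{rodier1973whittaker}, and part (iii) (the standard module conjecture) to Mui\'c \cite{muic2001proof}, with extensions in \cite{heiermann2007standard}, \cite{heiermann2013tempered}, and \cite{hanzer2009generalized}. Your sketch — the geometric lemma and Bruhat-cell filtration for heredity, exactness of the twisted Jacquet functor together with uniqueness of the Whittaker model for the multiplicity-one statement, and the Casselman--Shahidi analysis of the long intertwining operator and local coefficient for the standard module conjecture — is an accurate description of how the cited references establish these results, so there is no internal argument to compare against.

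One point worth flagging: the statement of (ii) is deliberately restricted to $H(V_m)=\Sp(V_m)$, and the discussion immediately following the theorem stresses that (ii) genuinely fails for the disconnected group $\Ort(V_m)$. Your outline of (ii) tacitly invokes uniqueness of the Whittaker model on the Levi factor, which is exactly what breaks down for $\Ort$; this is harmless given the stated hypothesis, but it is the reason the paper is careful to carry out its multiplicity-one arguments only on the symplectic side of the dual pair.
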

All of the above properties were originally proven in the setting of classical groups in \cite{muic2001proof}, and later extended to connected quasi-split groups in \cite{heiermann2007standard}, \cite{heiermann2013tempered}. Whereas (i) and (iii) can easily be extended to include the $\Ort(V)$ case (see \cite[Theorem 6.4]{hanzer2009generalized}), property (ii) fails in the non-connected case, so we need to be careful when dealing with representations of $\Ort(V)$. The third statement can be viewed as a consequence of the so-called generalized injectivity conjecture, established by M.~Hanzer in \cite{hanzer2009generalized}.

We often combine (iii) with the following result \cite[Introduction]{muic2005reducibility}:
\begin{Prop}
\label{prop:std_mod_reduc}
A standard representation of the form $\nu^{s_r}\delta_r \times \dotsb \times \nu^{s_1}\delta_1 \rtimes \tau$ reduces if and only if one of the following holds
\begin{enumerate}[(i)]
\item $\nu^{s_i}\delta_i \times \nu^{s_j}\delta_j$ reduces for some pair $i\neq j$;
\item $\nu^{s_i}\delta_i \times \nu^{-s_j}{\delta_j}^\vee$ reduces for some pair $i\neq j$;
\item $\nu^{s_i}\rtimes \tau$ reduces for some $i$.
\end{enumerate}
\end{Prop}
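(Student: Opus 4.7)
The plan is to study irreducibility of the standard module $M=\nu^{s_r}\delta_r\times\dotsb\times\nu^{s_1}\delta_1\rtimes\tau$ (with $s_r\geq\dotsb\geq s_1>0$) via the long intertwining operator $A\colon M\to\overline{M}$, where $\overline{M}=\nu^{-s_1}\delta_1^\vee\times\dotsb\times\nu^{-s_r}\delta_r^\vee\rtimes\tau$. The image of $A$ is the Langlands quotient $L$, which occurs with multiplicity one in both $M$ and $\overline{M}$; therefore $M$ is irreducible if and only if $A$ is an isomorphism.

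The key step is to factor $A$ as a composition of rank-one intertwining operators, obtained by moving the factors $\nu^{s_i}\delta_i$ one at a time from the left of $\rtimes$ to the right and reflecting them via $\nu^{s_i}\delta_i\mapsto\nu^{-s_i}\delta_i^\vee$. Each rank-one factor falls into one of three shapes, matching exactly the three conditions of the proposition:
\begin{enumerate}[(a)]
\item a swap $\nu^{s_i}\delta_i\times\nu^{s_j}\delta_j\to\nu^{s_j}\delta_j\times\nu^{s_i}\delta_i$ (while reordering factors on the same side of $\rtimes$), an isomorphism iff $\nu^{s_i}\delta_i\times\nu^{s_j}\delta_j$ is irreducible;
\item a mixed swap $\nu^{s_i}\delta_i\times\nu^{-s_j}\delta_j^\vee\to\nu^{-s_j}\delta_j^\vee\times\nu^{s_i}\delta_i$ (arising after one factor has been reflected), an isomorphism iff the source is irreducible;
\item a reflection $\nu^{s_i}\delta_i\rtimes\tau\to\nu^{-s_i}\delta_i^\vee\rtimes\tau$, an isomorphism iff $\nu^{s_i}\delta_i\rtimes\tau$ is irreducible.
\end{enumerate}
These three types correspond respectively to conditions (i), (ii), (iii). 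The ``only if'' direction is then immediate: if none of (i)--(iii) holds, every rank-one factor in the decomposition of $A$ is an isomorphism, hence $A$ itself is an isomorphism, so $M\cong L$ is irreducible.

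For the ``if'' direction, the plan is to promote a rank-one reducibility into a reducibility of $M$. For (iii), if $\nu^{s_i}\delta_i\rtimes\tau$ reduces, first bring $\nu^{s_i}\delta_i$ adjacent to $\tau$ by a sequence of positive-exponent swaps; if any of these swaps fails to be an isomorphism, condition (i) already holds and we are done by that case; otherwise, $M$ is isomorphic to a representation of the form $(\text{rest})\rtimes(\nu^{s_i}\delta_i\rtimes\tau)$ and, by exactness of parabolic induction, the reducibility of $\nu^{s_i}\delta_i\rtimes\tau$ transfers to $M$. Analogous arguments handle (i) and (ii) by isolating the reducible two-factor product. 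The main obstacle I expect is precisely this coordination of rank-one reducibilities---particularly when several of (i)--(iii) hold simultaneously or when the exponents $s_i$ are not pairwise distinct---together with the extension to the non-connected group $\Ort(V_m)$, where a little extra care is needed; the full details are carried out in \cite{muic2005reducibility}, whose strategy I would follow.
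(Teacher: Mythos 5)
The paper does not prove this proposition; it simply cites it from the introduction of \cite{muic2005reducibility}, so there is no internal proof here to compare against. Your sketch reproduces the intertwining-operator strategy that the cited reference actually carries out: factor the long operator $A\colon M\to\overline{M}$ into rank-one pieces, match the three rank-one shapes with conditions (i)--(iii), and use multiplicity one of the Langlands quotient to turn irreducibility of $M$ into $A$ being an isomorphism. This is sound in outline. Two details you glide past (both handled in the reference) are worth flagging: the equivalence ``$M$ irreducible $\iff$ $A$ an isomorphism'' also needs that $M$ and $\overline{M}$ have the same finite length, which holds because they coincide in the Grothendieck group; and the recursion in your ``if'' direction (``if a swap fails, (i) already holds and we are done by that case'') must be seen to terminate, which it does because each such appeal strictly decreases the number of swaps left to perform, bottoming out at an adjacent reducible block to which exactness of induction applies directly. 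With the explicit deferral to \cite{muic2005reducibility} for the holomorphy of the rank-one factors and the treatment of the non-connected $\Ort(V_m)$, the proposal is adequate.
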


\section{Theta correspondence}
\label{sec:theta}
In this section, we review the basic facts concerning the local theta correspondence established in \cite{kudla1986local}, \cite{howe1979beyond} and \cite{waldspurger1990}. We also fix the notation, roughly following \cite{kudla1996notes}.
\subsection{Howe duality}
\label{subs:Howe}
Let $\omega_{m,n}$ be the Weil representation of $G(W_n) \times H(V_m)$. The Weil representation depends on the choice of a non-trivial additive character $\psi\colon\F\to\C$. As this character is fixed throughout, we omit it from the notation. Similarly, if the dimensions $m$ and $n$ are known, we will often simply write $\omega$ instead of $\omega_{m,n}$.

For any $\pi \in \Irr(G(W_n))$, a basic structural fact about the Weil representation (\cite[Chapter II, III.4]{MVW}) guarantees that the maximal $\pi$-isotypic quotient of $\omega_{m,n}$ is of the form
\[
\pi \otimes \Theta(\pi,V_m)
\]
for a certain smooth representation $\Theta(\pi,V_m)$ of $H(V_m)$, called the full theta lift of $\pi$. When the target Witt tower is fixed, we will often denote it by $\Theta(\pi,m)$ or, more often, by $\Theta_l(\pi)$, where $l = n + \epsilon - m$.

The key result which establishes the theta correspondence is the following:
\begin{Thm}[Howe duality]
\label{thm:Howeduality}
If $\Theta(\pi,V_m)$ is non-zero, it possesses a unique irreducible quotient, denoted by $\theta(\pi,V_m)$.
\end{Thm}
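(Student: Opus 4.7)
The plan is to exploit the standard two-step reduction that underlies all known proofs of Howe duality. Admissibility and finite generation of $\Theta(\pi,V_m)$ (established by Howe and Kudla) guarantee that any non-zero quotient admits at least one irreducible quotient, so the entire content of the theorem is the \emph{uniqueness} assertion: equivalently, that $\Theta(\pi,V_m)$ possesses a unique maximal proper subrepresentation. Reformulating via Frobenius reciprocity and the fact that $\omega_{m,n}$ has a maximal $\pi$-isotypic quotient of the form $\pi\otimes\Theta(\pi,V_m)$, uniqueness is equivalent to the statement that for any irreducible $\sigma \in \Irr(H(V_m))$ one has
\[
\dim \Hom_{G(W_n)\times H(V_m)}\bigl(\omega_{m,n},\pi\otimes\sigma\bigr)\leq 1,
\]
together with the assertion that two irreducible $\sigma_1,\sigma_2$ both appearing as quotients of $\Theta(\pi,V_m)$ must be isomorphic.

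First I would perform a double induction on $n+m$, reducing to the case of supercuspidal $\pi$. If $\pi$ is a quotient of $\tau\rtimes \pi'$ for a proper parabolic of $G(W_n)$, the second form of Frobenius reciprocity identifies a quotient of $\Theta(\pi,V_m)$ with a certain piece of $R_Q(\omega_{m,n})$ interacting with $R_{\overline{P}}(\pi)$; Kudla's filtration of $R_Q(\omega_{m,n})$ (Theorem \ref{tm:Kudla}) expresses these Jacquet modules in terms of Weil representations for smaller dual pairs. Composing with the inductive hypothesis, one can transfer the uniqueness problem for $\pi$ to an analogous problem for the smaller representation $\pi'$, modulo controlling the $\GL$-factors that appear. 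This $\GL$-bookkeeping uses the Zelevinsky-type Jacquet module identities recalled in \S\ref{subs:computingJM}, and an exchange-of-arguments via see-saw dual pairs lets one peel off the $\GL$-contributions cleanly.

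Once the problem is reduced to $\pi$ supercuspidal, I would invoke the doubling strategy of Gan--Takeda (or, alternatively, Waldspurger's Schr\"odinger-model analysis when the residue characteristic is odd). The point is that the see-saw pair arising from the doubling $V_m\oplus V_m^-$ relates $\Hom_{H(V_m)}(\Theta(\pi,V_m),\sigma)$ to the space of $(G(W_n)\times G(W_n))$-invariant functionals on $\omega_{m,n}\otimes\omega_{m,n}^\vee$ supported on the image of the diagonal, paired with a functional coming from $\sigma\otimes\sigma^\vee$. Multiplicity one for the doubling zeta integral, together with unramified supercuspidal support matching, forces this Hom space to be at most one-dimensional and uniquely determines the partner $\sigma$.

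The main obstacle is squarely the supercuspidal case. The Jacquet-module reduction is essentially formal once Kudla's filtration is in hand, but establishing the multiplicity-one property for the doubling (equivalently, classifying the $G\times G$-invariant distributions supported on the diagonal in the symplectic space) is the hard analytic input; this is precisely where Gan--Takeda's argument, and subsequently Gan--Sun's treatment of residual characteristic two, do the real work. A secondary technical nuisance is the non-connectedness of $\Ort(V)$, which requires verifying that the reductions above are compatible with the full orthogonal group rather than just $\SO$, usually handled via the MVW involution and a careful tracking of contragredients as in Lemma \ref{lemma:MVWinv}.
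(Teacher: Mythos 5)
The paper does not prove this theorem at all: it is Theorem~\ref{thm:Howeduality}, stated and then attributed entirely to the literature --- Waldspurger~\cite{waldspurger1990} for residual characteristic $\neq 2$ and Gan--Takeda~\cite{gan2016proof} in general --- and used as a black box throughout. So there is no proof in the paper for your proposal to be compared against, and the first thing to flag is that a proof sketch here is out of scope for the paper's own exposition.

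Taken on its own terms, your outline also misattributes the core mechanism of the Gan--Takeda argument. Their proof of Howe duality does not run through the doubling method or see-saw pairs $V_m\oplus V_m^-$ together with multiplicity one for doubling zeta integrals; that circle of ideas underlies the Rallis inner product formula and the conservation relations (Kudla--Rallis, Sun--Zhu), not the uniqueness of the irreducible quotient of $\Theta(\pi,V_m)$. Gan--Takeda instead adapt the Jacquet-module/derivative strategy that M\'inguez used for the type II pairs $\GL_n\times\GL_m$, working directly with Kudla's filtration of $R_{P_k}(\omega)$ to establish a multiplicity-bounding lemma, rather than reducing to supercuspidal $\pi$ and appealing to an independent ``hard analytic input''. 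Your first paragraph (the Jacquet-module reduction using Theorem~\ref{tm:Kudla}) is broadly in the right spirit, but the second paragraph describes a different argument than the one you are citing, and the ``exchange-of-arguments via see-saw'' and ``unramified supercuspidal support matching'' steps are too vague to be checked and are not how the $\GL$-factors are controlled in either Waldspurger's or Gan--Takeda's proof. Finally, your last sentence about non-connectedness of $\Ort(V)$ and the MVW involution is a genuine concern, but Lemma~\ref{lemma:MVWinv} in this paper is a Langlands-classification statement and not the MVW involution itself, so the citation is off.
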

Originally conjectured by Howe in \cite[p.~279]{howe1979beyond}, it was first proven by Waldspurger \cite{waldspurger1990} when the residual characteristic of $\F$ is different from $2$, and by Gan and Takeda \cite{gan2016proof} in general.
The representation $\theta(\pi,V_m)$ is called the (small) theta lift of $\pi$; like the full lift, it will also be denoted by $\theta(\pi,m)$ and $\theta_l(\pi)$.

For future reference, we state the following simple but useful fact (\cite[Lemma 1.1]{muic2008structure}):
\begin{Lem}
\label{lemma:ThetaisHom}
For $\pi \in \Irr(G(W_n))$ we have
\[
\Theta(\pi,m)^\vee = \Hom_{G_n}(\omega_{m,n},\pi)_\infty.
\]
\end{Lem}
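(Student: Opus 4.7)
The plan is to unwind the defining universal property of the maximal $\pi$-isotypic quotient and then apply a standard tensor-hom adjunction. By definition of $\Theta(\pi,m)$, we have a $G_n\times H_m$-equivariant surjection
\[
\omega_{m,n}\twoheadrightarrow \pi\otimes\Theta(\pi,m)
\]
whose kernel $K$ is the intersection of the kernels of all $G_n$-equivariant maps $\omega_{m,n}\to\pi$. Consequently, for every $G_n$-map $f\colon\omega_{m,n}\to\pi$ there is a unique $H_m$-equivariant factorization through $\pi\otimes\Theta(\pi,m)$. This universal property translates into a natural isomorphism of $H_m$-modules
\[
\Hom_{G_n}(\omega_{m,n},\pi)\;\cong\;\Hom_{G_n}\bigl(\pi\otimes\Theta(\pi,m),\pi\bigr),
\]
where the $H_m$ action on both sides comes from its commuting action with $G_n$ inside $\omega_{m,n}$ (resp.\ from $H_m$ acting on the $\Theta(\pi,m)$ tensor factor).

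Next I would use the fact that $G_n$ acts trivially on $\Theta(\pi,m)$ (all the $G_n$-structure is in the $\pi$-factor), so by the usual tensor-hom adjunction
\[
\Hom_{G_n}\bigl(\pi\otimes\Theta(\pi,m),\pi\bigr)\;\cong\;\Hom_{\C}\bigl(\Theta(\pi,m),\Hom_{G_n}(\pi,\pi)\bigr).
\]
Since $\pi$ is an irreducible smooth representation of the $\ell$-adic group $G_n$ it is admissible, and Schur's lemma gives $\Hom_{G_n}(\pi,\pi)=\C$. Combining these steps yields an $H_m$-equivariant isomorphism
\[
\Hom_{G_n}(\omega_{m,n},\pi)\;\cong\;\Theta(\pi,m)^{*},
\]
where $\Theta(\pi,m)^{*}$ denotes the full algebraic dual. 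Passing to smooth vectors on the right gives $\Theta(\pi,m)^{\vee}$, so it remains to check that the functor $(-)_\infty$ on the left produces the same object, which is immediate from the definition of smoothness.

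Finally I would verify that the $H_m$-actions match. On $\Hom_{G_n}(\omega_{m,n},\pi)$ the group $H_m$ acts by $(h\cdot f)(v)=f(\omega(h^{-1})v)$, and one checks directly that under the chain of isomorphisms above this corresponds to the contragredient action on $\Theta(\pi,m)$: starting from $\omega_{m,n}\twoheadrightarrow\pi\otimes\Theta(\pi,m)$, an $H_m$-translation of a functional $f$ becomes the dual of the $H_m$-translation on $\Theta(\pi,m)$. I do not anticipate any real obstacle here — the only minor subtlety is bookkeeping the sign/inverse in the dual action and confirming that ``smooth'' means the same thing on both sides, which reduces to the observation that $f\in\Hom_{G_n}(\omega_{m,n},\pi)$ is $H_m$-smooth precisely when the corresponding linear functional on $\Theta(\pi,m)$ is fixed by an open subgroup of $H_m$.
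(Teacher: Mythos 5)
The paper does not prove this lemma; it simply cites \cite[Lemma 1.1]{muic2008structure}. Your argument — unwinding the universal property of the maximal $\pi$-isotypic quotient, applying tensor--hom adjunction, invoking Schur's lemma (which applies because smooth irreducible representations of $p$-adic groups are admissible), and passing to smooth vectors on both sides of the resulting $H_m$-equivariant isomorphism — is the standard and correct proof of exactly this fact; the only slip is the phrase ``unique $H_m$-equivariant factorization,'' since the factored map $\pi\otimes\Theta(\pi,m)\to\pi$ is only $G_n$-equivariant (it is the induced map on $\Hom$-spaces that is $H_m$-equivariant).
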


\subsection{First occurrence in towers}
\label{subs:first_occurrence}
The study of theta correspondence in towers is motivated by the following facts (\cite[Propositions 4.1 and 4.3]{kudla1996notes}):
\begin{Prop}
\label{prop:towers}
Let $\pi$ be an irreducible representation of $G(W_n)$. Fix a Witt tower $\mathcal{V} = (V_m)$.
\begin{enumerate}[(i)]
\item If $\Theta(\pi,V_m) \neq 0$, then $\Theta(\pi,V_{m+2r}) \neq 0$ for all $r\geqslant 0$.
\item For $m$ large enough, we have $\Theta(\pi,V_m) \neq 0$.
\end{enumerate}
\end{Prop}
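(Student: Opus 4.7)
The main tool for part (i) is Kudla's filtration (Theorem~\ref{tm:Kudla}). The plan is to compute the normalized Jacquet module $R_{Q_1}(\omega_{m+2,n})$ with respect to the maximal parabolic subgroup $Q_1 \subset H(V_{m+2})$ that stabilizes an isotropic line. Kudla's filtration produces a finite filtration of $R_{Q_1}(\omega_{m+2,n})$, as a representation of $\GL_1(\F) \times H(V_m) \times G(W_n)$, whose top subquotient is (up to an explicit character twist of $\GL_1(\F)$) isomorphic to $\omega_{m,n}$ tensored with the trivial action of $\GL_1(\F)$. Granting the surjection $\omega_{m,n} \twoheadrightarrow \pi \otimes \Theta(\pi,V_m)$, I would pull this back along the top piece of the filtration to obtain a nonzero $G(W_n)$-equivariant map $R_{Q_1}(\omega_{m+2,n}) \to \pi \otimes (\chi \otimes \Theta(\pi,V_m))$ for some character $\chi$ of $\GL_1(\F)$. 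By second adjointness (Frobenius reciprocity in the Bernstein form stated in \S\ref{subs:reps}) this yields a nonzero $G(W_n)\times H(V_{m+2})$-equivariant map
\[
\omega_{m+2,n} \longrightarrow \pi \otimes \Ind_{Q_1}^{H(V_{m+2})}\bigl(\chi \otimes \Theta(\pi,V_m)\bigr),
\]
which forces $\Theta(\pi,V_{m+2}) \neq 0$, and iterating gives the claim for all $r \geqslant 0$.

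For part (ii) the natural approach is to reach the stable range, i.e., to show directly that $\Theta(\pi,V_m) \neq 0$ once $\dim V_m$ is sufficiently large relative to $\dim W_n$. Using Lemma~\ref{lemma:ThetaisHom}, non-vanishing of $\Theta(\pi,V_m)$ amounts to producing a nonzero $G(W_n)$-equivariant homomorphism $\omega_{m,n} \to \pi$. In the Schr\"odinger model, $\omega_{m,n}$ is realized on a space of Schwartz functions on $V_m^{n}$ (or a Lagrangian thereof), and the $G(W_n)$-action is by the standard formulas. When $m$ is large enough that $W_n$ embeds isometrically into a Lagrangian of $V_m$ (the stable range), one can construct explicit matrix coefficients of $\pi$ against the Weil representation by pairing $\pi$ with a sufficiently large family of Schwartz functions, and standard asymptotics show that this pairing is nontrivial for some choice.

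The main obstacle is (ii): the argument for (i) is essentially a formal manipulation with Kudla's filtration once the top quotient has been identified, whereas (ii) requires a concrete non-vanishing input that cannot be obtained from formal adjunctions alone. The cleanest route, which I would follow, is to reduce to the stable range and then invoke the explicit Schr\"odinger model calculation (compare \cite[Prop.~4.3]{kudla1996notes}), so that the only genuine analytic content is the stable range non-vanishing; part (i) then propagates this up the tower.
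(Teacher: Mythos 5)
The paper does not prove Proposition~\ref{prop:towers}; it cites it directly from \cite[Propositions 4.1 and 4.3]{kudla1996notes}, and your sketch reproduces Kudla's argument rather than offering a different one. Part (i) as you outline it is correct: the filtration of $R_{Q_1}(\omega_{m+2,n})$ with respect to a maximal parabolic $Q_1$ of $H(V_{m+2})$ has top quotient $J^0 \cong \chi_W|\cdot|^{\mu}\otimes\omega_{m,n}$, one composes the surjection $R_{Q_1}(\omega_{m+2,n}) \twoheadrightarrow J^0$ with $\omega_{m,n}\twoheadrightarrow\pi\otimes\Theta(\pi,V_m)$, and then a nonzero element of $\Hom_{G(W_n)}(\omega_{m+2,n},\pi)$ is extracted by pairing against a linear functional on the $H(V_{m+2})$-factor. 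One small slip: the adjunction you want here is \emph{ordinary} Frobenius reciprocity
\[
\Hom_{H(V_{m+2})}\bigl(\omega_{m+2,n},\Ind_{Q_1}^{H(V_{m+2})}(\tau)\bigr)\cong\Hom_{M_{Q_1}}\bigl(R_{Q_1}(\omega_{m+2,n}),\tau\bigr),
\]
not Bernstein's second adjointness (which would involve the opposite parabolic and the contravariant direction); likewise you \emph{push forward} through the top quotient $J^0=R^0/R^1$, you do not pull back. Neither affects the logic. For (ii) your outline correctly identifies the stable range as the source of the non-vanishing, but as written it is really a pointer to \cite[Prop.~4.3]{kudla1996notes} rather than a self-contained argument: the phrase about ``pairing $\pi$ with a sufficiently large family of Schwartz functions'' elides the actual mechanism, which is that in the appropriate mixed model the restriction of $\omega_{m,n}$ to $G(W_n)$ has $C_c^\infty(G(W_n))$ as a quotient once $m$ is large enough, so every irreducible $\pi$ occurs. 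Since the paper itself relies on the citation, this level of detail matches what the paper does.
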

The above proposition implies that we can define, for any Witt tower $\mathcal{V} = (V_m)$,
\[
m_\mathcal{V}(\pi) = \min\{m \geqslant 0: \Theta(\pi,V_m) \neq 0\}.
\]
This number (also denoted by $m(\pi)$ when the choice of $\mathcal{V}$ is implicit) is called the first occurrence index of $\pi$. Note that we are using the term ``index"	 here to signify the dimension, although it would be more appropriate to use it for the Witt index of the corresponding space.

An important result which helps us compute the first occurrence indices is the so-called conservation relation. The Witt towers of quadratic spaces can be appropriately organized into pairs, with the towers comprising a pair denoted by $\mathcal{V}^+$ and $\mathcal{V}^-$ (a complete list of pairs of dual towers can be found in \cite[Chapter V]{kudla1996notes}). Thus, instead of observing just one target tower, we can simultaneously look at two of them. This way, for each $\pi \in \Irr(G(W_n))$ we get two corresponding first occurrence indices, $m^+(\pi)$ and $m^-(\pi)$.

If $\epsilon = -1$ so that $W_n$ is a quadratic space, we proceed as follows: since $G(W_n)$ is now equal to $\Ort(W_n)$, any $\pi \in \Irr(G(W_n))$ is naturally paired with its twist, $\det \otimes \pi$. This allows us to define
\begin{align*}
m^+(\pi) &= \min\{m(\pi), m(\det\otimes\pi)\},\\
m^-(\pi) &= \max\{m(\pi), m(\det\otimes\pi)\}.
\end{align*}
We are now able to set
\[
m^{\text{down}}(\pi) =  \min\{m^+(\pi), m^-(\pi)\}, \quad m^{\text{up}}(\pi) =  \max\{m^+(\pi), m^-(\pi)\}
\]
regardless of whether $\epsilon = 1$ or $\epsilon = -1$. Note that when $W_n$ is a quadratic space, we have $m^{\text{down}}(\pi) = m^+(\pi) $ and $ m^{\text{up}}(\pi) = m^-(\pi)$. The conservation relation (first conjectured by Kudla and Rallis in \cite{kudla2005first}, completely proven by Sun and Zhu in \cite{sun2015conservation}) states that
\[
m^{\text{up}}(\pi) + m^{\text{down}}(\pi) = 2n + 2\epsilon + 2.
\]
The tower in which $m(\pi) = m^{\text{down}}(\pi)$ (resp.~$m^{\text{up}}(\pi)$) will often be called the going-down (resp.~going-up) tower.
\subsection{Kudla's filtration}
\label{subs:Kudla}
One of our main tools is Kudla's filtration of $R_P(\omega)$, the Jacquet module of the Weil representation (\cite[Theorem 2.8]{kudla1986local}). We state it here (formulated as in \cite[Theorem 5.1]{atobe2017local}) along with a few useful corollaries.
\begin{Thm}
\label{tm:Kudla}
The Jacquet module $R_{P_k}(\omega_{m,n})$ possesses a $\GL_k(\F)\times G(W_{n-2k}) \times H(V_m)$-equivariant filtration
\[
R_{P_k}(\omega_{m, n}) = R^0 \supset R^1 \supset \dotsb \supset R^k \supset R^{k+1} = 0
\]
in which the successive quotients $J^a = R^a/R^{a+1}$ are given by
\[
J^a = \Ind_{P_{k-a,a}\times G_{n-2k}\times Q_a}^{\GL_k(\F)\times G_{n-2k}\times H_m}\left({\chi_V|\emph{\text{det}}_{\GL_{k-a}(\F)}|^{\lambda_{k-a}}\otimes \Sigma_a \otimes \omega_{m-2a, n-2k}}\right),
\]
where
\begin{itemize}
\item $\lambda_{k-a} = (m-n+k-a-\epsilon)/2$;
\item $P_{k-a,a}$ is the standard parabolic subgroup of $\GL_k(\F)$ with Levi component $\GL_{k-a}(\F) \times \GL_a(\F)$;
\item $\Sigma_a = C_c^\infty(\GL_a(\F))$, the space of locally constant compactly supported functions on $\GL_a(\F)$. The action of $\GL_a(\F) \times \GL_a(\F)$ on $\Sigma_a$ is given by
\[
[(g,h)\cdot f](x) = \chi_V(\det(g))\chi_W(\det(h))f(g^{-1}\cdot x\cdot h).
\]
\end{itemize}
Here the first $\GL_a(\F)$ factor comes from the Levi factor of $P_{k-a,a}$, whereas the second comes from the Levi factor $Q_a$. If $m-2a$ is less than the dimension of the first (anisotropic) space in $\mathcal{V}$, we put $R^a=J^a=0$.
\end{Thm}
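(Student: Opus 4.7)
The plan is to realize $\omega_{m,n}$ in a mixed Schr\"odinger model adapted to the parabolic $P_k$ stabilizing $U_k \subset W_n$, and then carry out a standard orbit-by-orbit analysis of the Jacquet functor $R_{P_k}$. Using the Witt decomposition $W_n = U_k \oplus W_{n-2k} \oplus U_k'$ one obtains an identification
\[
\omega_{m,n} \;\cong\; S(\Hom(U_k^*, V_m)) \otimes \omega_{m,n-2k}
\]
of $P_k \times H(V_m)$-modules, where $S$ denotes the Schwartz--Bruhat space. On this model the actions of $\GL(U_k) = \GL_k(\F)$ (on the first factor of $\Hom$), of $G(W_{n-2k})$ (on $\omega_{m,n-2k}$), of $H(V_m)$ (on $V_m$ and on $\omega_{m,n-2k}$), and of the unipotent radical $N_k$ can all be written down explicitly. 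In particular, the abelianization of $N_k$ acts on $S(\Hom(U_k^*, V_m))$ via the additive character that pairs $n \in N_k$ with the moment map $\mu\colon T \mapsto \langle T\spacedcdot, T\spacedcdot\rangle_V$ valued in the space of $\epsilon$-Hermitian forms on $U_k^*$.

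Taking $N_k$-coinvariants, i.e.\ applying the Jacquet functor, kills the Fourier dual of the image of $\mu$, so $R_{P_k}(\omega_{m,n})$ is naturally identified with $S(Z) \otimes \omega_{m, n-2k}$, where $Z \subset \Hom(U_k^*, V_m)$ is the closed subvariety of maps $T$ with totally isotropic image. Next, I stratify $Z$ by rank, setting $Z_a = \{T \in Z \colon \mathrm{rk}(T) = a\}$ for $a = 0,\dotsc,k$, and let $R^a \subset S(Z) \otimes \omega_{m, n-2k}$ be the subspace of sections supported on the closed stratum $\bigsqcup_{b \geqslant a} Z_b$. The standard short exact sequence for sheaves on a stratification then produces the desired filtration $R^0 \supset R^1 \supset \dotsb \supset R^k \supset 0$ with successive quotients $J^a = R^a/R^{a+1}$ canonically isomorphic to $S(Z_a) \otimes (\text{Weil fiber})$. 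The Weil fiber turns out to be $\omega_{m-2a, n-2k}$, because the orthogonal complement in $V_m$ of a fixed isotropic $a$-dimensional subspace is a non-degenerate space of dimension $m-2a$ lying in the same Witt tower.

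It remains to identify $J^a$ as a parabolic induction. The group $\GL(U_k) \times H(V_m)$ acts transitively on $Z_a$, and fixing a basepoint $T_0 \in Z_a$ one sees that its stabilizer is exactly $P_{k-a,a} \times Q_a$: the parabolic $P_{k-a,a}$ of $\GL_k(\F)$ preserves $\ker T_0$, while the parabolic $Q_a$ of $H(V_m)$ preserves the isotropic image of $T_0$. The fiber of $Z_a$ over the flag data is a $\GL_a(\F)$-torsor, recording the choice of an isomorphism between the two $a$-dimensional pieces, which is the source of the $\Sigma_a = C_c^\infty(\GL_a(\F))$ factor with the stated bi-action of $\GL_a(\F) \times \GL_a(\F)$. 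Normalized induction from $P_{k-a,a} \times G_{n-2k} \times Q_a$ then reassembles everything. The vanishing $R^a = J^a = 0$ when $m-2a < \dim V_{m_0}$ reflects the geometric fact that $V_m$ admits no isotropic subspace of dimension exceeding $(m-m_0)/2$, so $Z_a = \emptyset$.

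The main technical obstacle will be the careful bookkeeping of characters: one must correctly combine the modular characters of $P_k$, $P_{k-a,a}$, and $Q_a$ (arising from the Jacquet functor and from normalized induction), the splitting characters $\chi_V$ and $\chi_W$ coming from the Schr\"odinger model (cf.\ the explicit formulas in \cite[\S I.2]{kudla1996notes}), and the shift absorbed when passing from $\omega_{m,n}$ to $\omega_{m-2a, n-2k}$. Tracking these through yields precisely the exponent $\lambda_{k-a} = (m-n+k-a-\epsilon)/2$ and the twist $\chi_V(\det g)\chi_W(\det h)$ in the bi-action on $\Sigma_a$. A secondary subtlety, when $\epsilon = -1$, is that $H(V_m)$ may be disconnected; but since the orbit/stabilizer calculation above is geometric, it goes through without modification, and only the final character identification needs to be verified component by component.
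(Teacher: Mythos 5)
The paper does not prove this statement; it is quoted directly from Kudla \cite{kudla1986local}, in the formulation of \cite[Theorem 5.1]{atobe2017local}, and the text proceeds immediately to its corollaries. There is thus no in-paper proof to compare against. Your outline---mixed Schr\"odinger model adapted to $P_k$, stratification by rank of the variety of isotropic maps, orbit analysis identifying each stratum as an induced representation---is essentially Kudla's own method, and the character bookkeeping you defer to the end is indeed where $\lambda_{k-a}$ and the $\chi_V$, $\chi_W$ twists come from.

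There is, however, a genuine gap in the Jacquet module step, and your account is internally inconsistent there. You claim that taking $N_k$-coinvariants identifies $R_{P_k}(\omega_{m,n})$ with $S(Z)\otimes\omega_{m,n-2k}$, where $Z$ is the isotropic cone. That identification only accounts for the coinvariants by the \emph{center} $Z(N_k)$ of the unipotent radical, which acts on $S(\Hom(U_k^*,V_m))$ by the moment-map characters and whose coinvariants cut the support down to $Z=\mu^{-1}(0)$. The quotient $N_k/Z(N_k)$ still acts nontrivially, and its action on the second factor $\omega_{m,n-2k}$ depends on the basepoint $T\in Z$ through the isotropic subspace $T(U_k^*)\subset V_m$; taking these remaining coinvariants fiberwise over the rank-$a$ stratum is precisely what replaces $\omega_{m,n-2k}$ by $\omega_{m-2a,n-2k}$ in the graded pieces. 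As written, your intermediate identification would force every stratum to carry the constant fiber $\omega_{m,n-2k}$, contradicting your (correct) subsequent assertion that the fiber over $Z_a$ is $\omega_{m-2a,n-2k}$: you have stated the answer but omitted the coinvariant computation that produces it. The repair is to take central coinvariants first, then stratify by rank, then compute the $(N_k/Z(N_k))$-coinvariants of each stratum, using the exactness of the Jacquet functor to reassemble the result into a filtration of the full Jacquet module. A secondary slip: the locus $\bigsqcup_{b\geqslant a}Z_b$ of maps of rank at least $a$ is open, not closed (rank is lower semicontinuous), so $R^a$ is the space of sections supported on this open subvariety, and $J^a$ is its restriction to the locally closed stratum $Z_a$.
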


We will often use the following proposition (see Corollary 3.2 of \cite{muic2008structure} and also Proposition 5.2 of \cite{atobe2017local}) derived from the previous theorem:
\begin{Prop}
\label{prop:Muic_isotype}
Let $\delta$ be an irreducible essentially square integrable representation of $\GL_k(\F)$ and $\pi_0 \in \Irr (G_{n-2k})$, for some $k>0$. Then the space $\Hom_{\GL_k(\F)\times G_{n-2k}}(J^a, \chi_V\delta^\vee\otimes \pi_0)_\infty$, viewed as a representation of $H_m$, is isomorphic to
\[
\begin{cases}
\chi_W^{-1}\delta^\vee \rtimes \Hom_{G_{n-2k}}(\omega_{m-2k,n-2k}, \pi_0)_\infty, &\text{if } a = k, \\
\chi_W^{-1}\textnormal{St}_{k-1}\nu^{\frac{k-l+1}{2}} \rtimes \Hom_{G_{n-2k}}(\omega_{m-2k+2,n-2k}, \pi_0)_\infty,&\text{if } a = k-1 \text{ and } \delta = \textnormal{St}_k\nu^{\frac{l-k}{2}}, \\
0, &\text{otherwise.}
\end{cases}
\]
\end{Prop}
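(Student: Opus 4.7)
The plan is to compute the Hom space by applying Frobenius reciprocity in stages, peeling off the two inductions defining $J^a$---one in $H_m$ from $Q_a$, one in $\GL_k(\F)$ from $P_{k-a,a}$---and then using the rigid structure of Jacquet modules of essentially square integrable representations to single out the non-vanishing cases.

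First, since the $H_m$-induction from $Q_a$ commutes with the action of $\GL_k(\F) \times G_{n-2k}$, an induction-Hom adjunction allows us to pull it outside the Hom, yielding
\[
\Hom_{\GL_k(\F) \times G_{n-2k}}(J^a, \chi_V \delta^\vee \otimes \pi_0)_\infty \cong \Ind_{Q_a}^{H_m}(X),
\]
where $X$ is a representation of $M_{Q_a} = \GL_a(\F) \times H_{m-2a}$. The factors $\GL_k(\F)$ and $G_{n-2k}$ act independently on the two tensor factors inside, so $X$ splits as $X_1 \otimes X_2$ with
\[
X_2 = \Hom_{G_{n-2k}}(\omega_{m-2a, n-2k}, \pi_0)_\infty,
\]
which by Lemma \ref{lemma:ThetaisHom} is $\Theta(\pi_0, m-2a)^\vee$ and provides the theta-lift piece of the final formula. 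The factor $X_1$ is a representation of the $\GL_a(\F)$-factor of $M_{Q_a}$, given by
\[
X_1 = \Hom_{\GL_k(\F)}\bigl(\Ind_{P_{k-a,a}}^{\GL_k(\F)}(\chi_V |\textnormal{det}|^{\lambda_{k-a}} \otimes \Sigma_a), \chi_V \delta^\vee\bigr)_\infty.
\]

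Applying Frobenius reciprocity, this becomes
\[
X_1 = \Hom_{M_{k-a,a}}\bigl(\chi_V |\textnormal{det}|^{\lambda_{k-a}} \otimes \Sigma_a,\ R_{\bar{P}_{k-a,a}}(\chi_V \delta^\vee)\bigr)_\infty.
\]
Because $\delta$ is essentially square integrable, $R_{\bar{P}_{k-a,a}}(\delta^\vee)$ is either zero or irreducible, with $\GL_{k-a}(\F)$-component itself essentially square integrable; admitting a non-zero homomorphism from the one-dimensional character $\chi_V |\textnormal{det}|^{\lambda_{k-a}}$ forces this component to be one-dimensional, hence $k-a \leq 1$. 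For $a < k-1$ the Hom therefore vanishes, explaining the ``$0$ otherwise'' case. When $a = k$, there is no $\GL_k$-induction and $X_1 = \Hom_{\GL_k(\F)}(\Sigma_k, \chi_V \delta^\vee)_\infty$; by the explicit bi-action on $\Sigma_k = C_c^\infty(\GL_k(\F))$, this is $\chi_W^{-1} \delta^\vee$ as a $\GL_k(\F)$-module. When $a = k-1$, matching $\chi_V \nu^{\lambda_1}$ against the $\GL_1$-slot of $R_{\bar{P}_{1,k-1}}(\chi_V \delta^\vee)$ forces $\delta = \textnormal{St}_k \nu^{(l-k)/2}$, and the surviving $\GL_{k-1}$-component, paired through $\Sigma_{k-1}$, becomes $\chi_W^{-1} \textnormal{St}_{k-1} \nu^{(k-l+1)/2}$.

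The main obstacle I expect is the careful bookkeeping of characters and exponents in the $a = k-1$ case: using $\lambda_{k-a} = (m-n+k-a-\epsilon)/2$ together with $l = n + \epsilon - m$ to verify that the character appearing in the $\GL_1$-slot of the Jacquet module is exactly $\chi_V \nu^{(1-l)/2}$ (thereby pinpointing $\delta$), and that the surviving $\GL_{k-1}$-factor receives the precise exponent $(k-l+1)/2$. A related delicate point is that the twisted bi-action on $\Sigma_a$ introduces exactly one factor of $\chi_W$ (and a factor of $\chi_V$ which cancels the left-hand $\chi_V$-twist), producing the leading character $\chi_W^{-1}$ on the $M_{Q_a}$-side. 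Once these verifications are in place, re-inducing from $Q_a$ to $H_m$ assembles $X_1 \otimes X_2$ into the three-case formula of the proposition.
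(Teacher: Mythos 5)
The paper does not prove this proposition; it simply cites Corollary~3.2 of \cite{muic2008structure} and Proposition~5.2 of \cite{atobe2017local}, so there is no in-paper proof to compare against. Your argument reconstructs, essentially correctly, the standard proof given in those two references: one peels off the $H_m$-induction in the variable disjoint from the Hom, then computes the remaining $\GL_k(\F)$-side Hom via Bernstein's second Frobenius reciprocity and the fact that Jacquet modules of an essentially square integrable representation of $\GL_k(\F)$ along a maximal parabolic are either zero or irreducible with essentially square integrable factors. Forcing the $\GL_{k-a}$-slot to accept the one-dimensional character $\chi_V\nu^{\lambda_{k-a}}$ indeed kills all cases $a \leq k-2$, and I checked that your exponent bookkeeping in the $a = k-1$ case works out: with $\lambda_1 = (1-l)/2$, the $\GL_1$-slot of $R_{\bar P_{1,k-1}}(\chi_V\delta^\vee)$ equals $\chi_V\nu^{(1-l)/2}$ precisely when $\delta = \textnormal{St}_k\nu^{(l-k)/2}$, and the surviving $\GL_{k-1}$-factor, after pairing with $\Sigma_{k-1}$, is $\chi_W^{-1}\textnormal{St}_{k-1}\nu^{(k-l+1)/2}$ as claimed. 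The one point I would push you to state more carefully is the very first step: pulling $\Ind_{Q_a}^{H_m}$ out of $\Hom_{\GL_k(\F)\times G_{n-2k}}(\,\cdot\,,\cdot\,)_\infty$ is \emph{not} a Frobenius-type adjunction but a separate commutation lemma (the functor $\Hom_{G_1}(\,\cdot\,,W)_\infty$ from $G_1\times G_2$-modules to $G_2$-modules commutes with parabolic induction in the $G_2$-variable). It is a standard lemma, appearing in \cite{kudla1996notes} and \cite{muic2008structure}, but calling it an ``induction-Hom adjunction'' mislabels what is actually being invoked; a reader who tried to locate such an adjunction in the general theory would not find one. Once that step is identified as the correct lemma, the rest of your proof is sound and is the same argument as in the cited sources.
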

Recall that, in the above proposition, we have $\Hom_G(\omega,\pi)_\infty = \Theta(\pi)^\vee$. Furthermore, $\textnormal{St}_k$ denotes the so-called Steinberg representation of $\GL_k(\F)$, the square integrable representation attached to the segment $[|\cdot|^{\frac{1-k}{2}},|\cdot|^{\frac{k-1}{2}}]$. Thus $\textnormal{St}_k\nu^{\frac{l-k}{2}} = \langle|\cdot|^\frac{l-1}{2}, |\cdot|^{\frac{l+1}{2}-k}\rangle$ (see the beginning of Section \ref{sec:interlude} for the notation used here). We point out that the condition $l>0$ given in \cite[Proposition 5.2]{atobe2017local} is not necessary (cf.~\cite[Corollary 3.2]{muic2008structure}).

We now list a few useful corollaries of Proposition \ref{prop:Muic_isotype}. The first one is \cite[Corollary 5.3]{atobe2017local} (see also \cite[Corollary 3.2]{muic2008structure}).
\begin{Cor}
\label{cor:theta_epi}
Let $\pi\in \Irr(G_n), \pi_0 \in \Irr(G_{n-2k})$ and let $\delta$ be an irreducible essentially square integrable representation of $\GL_k(\F)$. Assume that $\delta \ncong \textnormal{St}_k\nu^{\frac{l-k}{2}}$, where $l = n - m + \epsilon$. Then
\[
\chi_V\delta \rtimes \pi_0 \twoheadrightarrow \pi
\]
implies
\[
\chi_W\delta \rtimes \Theta_l(\pi_0) \twoheadrightarrow \Theta_l(\pi).
\]
\end{Cor}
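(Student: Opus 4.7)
The plan is to translate the surjection hypothesis into a statement about Homs out of the Weil representation, and then exploit Kudla's filtration together with Proposition~\ref{prop:Muic_isotype} to pin down the only surviving piece.

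First, by Lemma~\ref{lemma:MVWinv}, the assumption $\chi_V\delta \rtimes \pi_0 \twoheadrightarrow \pi$ is equivalent to the embedding $\pi \hookrightarrow \chi_V\delta^\vee \rtimes \pi_0$. The target conclusion $\chi_W\delta \rtimes \Theta_l(\pi_0) \twoheadrightarrow \Theta_l(\pi)$ will follow from its dualized form, namely the injection
\[
\Theta_l(\pi)^\vee \hookrightarrow \chi_W^{-1}\delta^\vee \rtimes \Theta_l(\pi_0)^\vee,
\]
using that $\chi_W^{-1} = \chi_W$ and the standard duality of parabolic induction. So my goal reduces to producing this $H_m$-equivariant injection.

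Next, apply the left-exact functor $\Hom_{G_n}(\omega_{m,n},-)_\infty$ to the embedding $\pi \hookrightarrow \chi_V\delta^\vee \rtimes \pi_0$. By Lemma~\ref{lemma:ThetaisHom} the left side becomes $\Theta_l(\pi)^\vee$, while Bernstein's (second) form of Frobenius reciprocity rewrites the right side as $\Hom_{\GL_k(\F)\times G_{n-2k}}(R_{P_k}(\omega_{m,n}),\chi_V\delta^\vee\otimes\pi_0)_\infty$, regarded as an $H_m$-representation. Thus
\[
\Theta_l(\pi)^\vee \hookrightarrow \Hom_{\GL_k(\F)\times G_{n-2k}}\bigl(R_{P_k}(\omega_{m,n}),\chi_V\delta^\vee\otimes\pi_0\bigr)_\infty.
\]

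The final step is to estimate the right-hand side via Kudla's filtration $R_{P_k}(\omega_{m,n}) = R^0 \supset R^1 \supset \cdots \supset R^{k+1} = 0$. Taking Hom into $\chi_V\delta^\vee\otimes\pi_0$ is contravariant and left-exact, so each short exact sequence $0 \to R^{a+1} \to R^a \to J^a \to 0$ yields $\Hom(R^a,-) \hookrightarrow \Hom(R^{a+1},-)$ whenever $\Hom(J^a,-) = 0$. Proposition~\ref{prop:Muic_isotype} exactly identifies these Hom spaces: the $a=k-1$ slot is killed by the assumption $\delta \ncong \textnormal{St}_k\nu^{(l-k)/2}$, and all $a \le k-2$ contribute zero, while the top piece $a=k$ yields $\chi_W^{-1}\delta^\vee \rtimes \Theta_l(\pi_0)^\vee$. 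Threading these vanishings through the filtration gives an injection of the Hom space into $\Hom(J^k,\chi_V\delta^\vee\otimes\pi_0)_\infty = \chi_W^{-1}\delta^\vee \rtimes \Theta_l(\pi_0)^\vee$, which is precisely the desired embedding.

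The main technical obstacle is bookkeeping the $H_m$-action throughout: one must check that the Bernstein reciprocity identification and the successive-quotient arguments are $H_m$-equivariant (so that the final injection really is an $H_m$-map), and that Lemma~\ref{lemma:MVWinv} can be invoked to pass from the injection to the surjection in the non-connected orthogonal setting. Both points are essentially present in the references cited for Proposition~\ref{prop:Muic_isotype} (in particular \cite{muic2008structure}), so the argument is mostly a careful assembly rather than new input.
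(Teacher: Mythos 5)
Your proof is correct and follows essentially the same route as the paper's argument (the paper cites \cite{atobe2017local}, Corollary 5.3, for this statement, but spells out the identical machinery in its proof of Corollary \ref{cor:shaving}): convert the surjection to an embedding via Lemma \ref{lemma:MVWinv}, apply $\Hom_{G_n}(\omega,-)_\infty$, pass to $\Hom$ of $R_{P_k}(\omega)$ by Frobenius reciprocity, run Kudla's filtration with Proposition \ref{prop:Muic_isotype} killing all layers except $J^k$, and dualize back. One small slip: the identification $\Hom_{G_n}(\omega,\chi_V\delta^\vee\rtimes\pi_0)_\infty \cong \Hom_{\GL_k\times G_{n-2k}}(R_{P_k}(\omega),\chi_V\delta^\vee\otimes\pi_0)_\infty$ is the standard (first) form of Frobenius reciprocity, not the Bernstein second form -- the latter would bring in the opposite parabolic $\overline{P_k}$ and does not apply here; your displayed formula is nevertheless the correct one, so this is only a mislabel.
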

The second corollary we state is a slight modification of the first: this time, we are unable to obtain information about the full lift $\Theta_l(\pi)$, but we allow the special case $\delta \cong \textnormal{St}_k\nu^{\frac{l-k}{2}}$:
\begin{Cor}
\label{cor:shaving}
Let $\delta$ be an irreducible essentially square integrable representation of $\GL_k(\F)$ and let $\pi\in \Irr(G_n), \pi_0 \in \Irr(G_{n-2k})$ be such that
\[
\chi_V\delta \rtimes \pi_0 \twoheadrightarrow \pi.
\]
Furthermore, let $A$ be an admissible representation of a general linear group. Assume that an irreducible representation $\sigma$ satisfies
\[
A \rtimes \Theta_l(\pi) \twoheadrightarrow \sigma,
\]
where $l = n - m + \epsilon$. If $\delta \ncong \textnormal{St}_k\nu^{\frac{l-k}{2}}$, then
\[
\label{eq_shaving1}\tag{i}
A \times \chi_W\delta \rtimes \Theta_l(\pi_0) \twoheadrightarrow \sigma.
\]
If $\delta \cong \textnormal{St}_k\nu^{\frac{l-k}{2}} \cong \langle|\cdot|^{\frac{l-1}{2}}, |\cdot|^{\frac{l+1}{2}-k}\rangle$, then either (i) is true, or the following holds:
\[
\label{eq_shaving2}\tag{ii}
A \times \chi_W\langle|\cdot|^{\frac{l-3}{2}}, |\cdot|^{\frac{l+1}{2}-k}\rangle \rtimes \Theta_{l-2}(\pi_0) \twoheadrightarrow \sigma.
\]
\end{Cor}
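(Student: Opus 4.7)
The first assertion is immediate: when $\delta \ncong \textnormal{St}_k\nu^{(l-k)/2}$, Corollary \ref{cor:theta_epi} already supplies a surjection $\chi_W\delta \rtimes \Theta_l(\pi_0) \twoheadrightarrow \Theta_l(\pi)$, and composing $A \rtimes \spacedcdot$ with the given map $A \rtimes \Theta_l(\pi) \twoheadrightarrow \sigma$ yields (i). The substantive case is therefore $\delta \cong \textnormal{St}_k\nu^{(l-k)/2}$; the plan is to reprise the filtration argument behind Corollary \ref{cor:theta_epi}, but this time to retain both of the (now possibly nonzero) graded pieces and then exploit the irreducibility of $\sigma$ to distribute the outcome between (i) and (ii).

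Invoking Lemma \ref{lemma:MVWinv} converts the hypothesis $\chi_V\delta \rtimes \pi_0 \twoheadrightarrow \pi$ into an embedding $\pi \hookrightarrow \chi_V\delta^\vee \rtimes \pi_0$. Combined with Lemma \ref{lemma:ThetaisHom} and Frobenius reciprocity, this yields
\[
\Theta_l(\pi)^\vee \hookrightarrow \Hom_{G_n}(\omega_{m,n},\chi_V\delta^\vee\rtimes\pi_0)_\infty \cong \Hom_{\GL_k(\F)\times G_{n-2k}}(R_{P_k}(\omega_{m,n}),\chi_V\delta^\vee\otimes\pi_0)_\infty =: T.
\]
Applying $\Hom(\spacedcdot,\chi_V\delta^\vee\otimes\pi_0)_\infty$ to Kudla's filtration (Theorem \ref{tm:Kudla}), Proposition \ref{prop:Muic_isotype} shows that only the pieces at $a=k$ and $a=k-1$ contribute, namely
\[
T'' := \chi_W^{-1}\delta^\vee \rtimes \Theta_l(\pi_0)^\vee, \qquad T' := \chi_W^{-1}\textnormal{St}_{k-1}\nu^{(k-l+1)/2} \rtimes \Theta_{l-2}(\pi_0)^\vee.
\]
Since $\Hom$ is contravariant and $R^k$ is the smallest nonzero step of the filtration on $R_{P_k}(\omega_{m,n})$, the usual diagram chase (using that $\Hom(J^a,\chi_V\delta^\vee\otimes\pi_0)_\infty = 0$ for $a < k-1$) identifies a subrepresentation $T_1 \subseteq T$ with $T_1 \hookrightarrow T'$ and $T/T_1 \hookrightarrow T''$.

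Finally, dualize the hypothesis (via MVW) to obtain the embedding $\sigma^\vee \hookrightarrow A^\vee \rtimes \Theta_l(\pi)^\vee \hookrightarrow A^\vee \rtimes T$; exactness of parabolic induction produces the short exact sequence $0\to A^\vee\rtimes T_1 \to A^\vee\rtimes T \to A^\vee\rtimes (T/T_1) \to 0$ together with an embedding $A^\vee\rtimes(T/T_1) \hookrightarrow A^\vee\rtimes T''$. Because $\sigma^\vee$ is irreducible, it either lies entirely in $A^\vee\rtimes T_1 \hookrightarrow A^\vee\rtimes T'$ or embeds into $A^\vee\rtimes T''$. Dualizing once more, and using the identity $(\textnormal{St}_{k-1}\nu^{(k-l+1)/2})^\vee = \langle|\cdot|^{(l-3)/2},|\cdot|^{(l+1)/2-k}\rangle$, the former alternative yields (ii) and the latter yields (i). The main technical care lies in determining which graded piece of Kudla's filtration becomes the subrepresentation upon taking $\Hom$ (a simple consequence of the ordering $R^0 \supset \dotsb \supset R^k$) and in verifying that the MVW involution commutes with parabolic induction on both $\Sp$ and $\Ort$ sides; once these are in place, the rest is essentially bookkeeping, and the fact that $\Theta_l(\pi)$ itself need not be irreducible is harmlessly absorbed by the irreducibility hypothesis on $\sigma$.
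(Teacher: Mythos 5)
Your proof is correct and follows essentially the same route as the paper's: Kudla's filtration combined with Proposition \ref{prop:Muic_isotype} isolates the two relevant graded pieces $J^{k-1}$ and $J^k$, and irreducibility of $\sigma$ distributes the embedding $\sigma^\vee \hookrightarrow A^\vee \rtimes T$ between the two alternatives upon dualizing. The one minor deviation is that you dispose of the case $\delta\ncong\textnormal{St}_k\nu^{(l-k)/2}$ by quoting Corollary \ref{cor:theta_epi} directly, whereas the paper keeps it inside the filtration argument by noting that $J^{k-1}$ then contributes zero; both are valid, and your phrasing of the diagram chase in terms of $T_1 = T\cap T'$ inside $\Hom(R^{k-1},\cdot)$ is an equivalent reformulation of the paper's use of $\mathrm{Im}(f)\cap\mathrm{Ker}(h)$.
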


\begin{proof}
By assumption, we have $\pi \hookrightarrow \chi_V\delta^\vee \rtimes \pi_0$, so
\begin{align*}
\Theta_l(\pi)^\vee &\cong \Hom_{G_n}(\omega_{m,n},\pi)_{\infty}\\
&\hookrightarrow  \Hom_{G_n}(\omega_{m,n},\chi_V\delta^\vee \rtimes \pi_0)_{\infty}\\
&\cong \Hom_{\GL_k(\F) \times G_{n-2k}}(R_{P_k}(\omega_{m,n}),\chi_V\delta^\vee \otimes \pi_0)_{\infty}.
\end{align*}
We now use Kudla's filtration to analyze $R_{P_k}(\omega_{m,n})$. For each index $a = 0, \dotsc, k$ we have an exact sequence
\[
0 \to R^{a+1 }\to R^{a} \to J^{a} \to 0.
\]
Applying the $\Hom_{\GL_k(\F) \times G_{n-2k}}(\ \cdot\ , \delta^\vee \otimes \pi_0)_\infty$ functor we get
\begin{equation}
\begin{aligned}
\label{eq_exact}
0 \to \Hom_{\GL_k(\F) \times G_{n-2k}}(J^{a},\chi_V\delta^\vee \otimes \pi_0)_{\infty} &\to \Hom_{\GL_k(\F) \times G_{n-2k}}(R^{a}, \chi_V\delta^\vee \otimes \pi_0)_{\infty}\\
&\to \Hom_{\GL_k(\F) \times G_{n-2k}}(R^{a+1}, \chi_V\delta^\vee \otimes \pi_0)_{\infty}.
\end{aligned}
\end{equation}
Since we know, by Proposition \ref{prop:Muic_isotype}, that the space $\Hom_{\GL_k(\F) \times G_{n-2k}}(J^{a}, \chi_V\delta^\vee \otimes \pi_0)_{\infty}$ is trivial for $a = 0, \dotsc, k-2$, this leads to an inclusion
\[
\Hom_{\GL_k(\F) \times G_{n-2k}}(R_{P_k}(\omega_{m,n}),\chi_V\delta^\vee \otimes \pi_0)_{\infty} \hookrightarrow \Hom_{\GL_k(\F) \times G_{n-2k}}(R^{k-1},\chi_V\delta^\vee \otimes \pi_0)_{\infty}.
\]
In particular, we have $\Theta_l(\pi)^\vee \hookrightarrow \Hom_{\GL_k(\F) \times G_{n-2k}}(R^{k-1}, \chi_V\delta^\vee \otimes \pi_0)_{\infty}$. Inducing with $A^\vee$, we get 
\[
A^\vee \rtimes \Theta_l(\pi)^\vee \hookrightarrow A^\vee \rtimes \Hom_{\GL_k(\F) \times G_{n-2k}}(R^{k-1}, \chi_V\delta^\vee \otimes \pi_0)_{\infty}.
\]
By assumption, we have $\sigma^\vee {\hookrightarrow} A^\vee \rtimes \Theta_l(\pi)^\vee$, so there is an injective equivariant map
\[
f\colon \sigma^\vee {\hookrightarrow} A^\vee\rtimes\Hom_{\GL_k(\F) \times G_{n-2k}}(R^{k-1}, \chi_V\delta^\vee \otimes \pi_0)_{\infty}.
\]
On the other hand, we may set $a=k-1$ in \eqref{eq_exact} and induce to get
\begin{align*}
0 &\to A^\vee\rtimes\Hom_{\GL_k(\F) \times G_{n-2k}}(J^{k-1}, \chi_V\delta^\vee \otimes \pi_0)_{\infty}\\
&\stackrel{g}{\to} A^\vee\rtimes\Hom_{\GL_k(\F) \times G_{n-2k}}(R^{k-1}, \chi_V\delta^\vee \otimes \pi_0)_{\infty} \stackrel{h}{\to} A^\vee\rtimes\Hom_{\GL_k(\F) \times G_{n-2k}}(J^{k}, \chi_V\delta^\vee \otimes \pi_0)_{\infty}.
\end{align*}
We now consider two options:
\begin{enumerate}[(i)]
\item If $\text{Im}(f) \cap \text{Ker}(h) = 0$, then we have an injective map
\[
h\circ f\colon \sigma^\vee \hookrightarrow A^\vee\rtimes\Hom_{\GL_k(\F) \times G_{n-2k}}(J^{k}, \chi_V\delta^\vee \otimes \pi_0)_{\infty}.
\]
Proposition \ref{prop:Muic_isotype} describes $\Hom_{\GL_k(\F) \times G_{n-2k}}(J^{k}, \chi_V\delta^\vee \otimes \pi_0)_{\infty}$; by taking the contragredient we get
\[
A \rtimes \chi_W\delta \rtimes \Theta_l(\pi_0) \twoheadrightarrow \sigma.
\]
Note that $\delta \ncong  \textnormal{St}_k\nu^{\frac{l-k}{2}}$ implies $\Hom_{\GL_k(\F) \times G_{n-2k}}(J^{k-1}, \chi_V\delta^\vee \otimes \pi_0)_{\infty}=0$. In that case, $ \text{Ker}(h) = \text{Im}(g) = 0$, so we always have the above result. If $\delta \cong  \textnormal{St}_k\nu^{\frac{l-k}{2}}$, we may have $\text{Im}(f) \cap \text{Ker}(h) \neq 0$.

\item If $\text{Im}(f) \cap \text{Ker}(h) \neq 0$, then the irreducibility of $\sigma$ implies $\sigma^\vee \hookrightarrow  \text{Ker}(h)$. By the exactness of the above sequence we have $\text{Ker}(h) = \text{Im}(g)$, and since $g$ is injective, we also have $ \text{Im}(g) \cong A^\vee \rtimes \Hom_{\GL_k(\F) \times G_{n-2k}}(J^{k-1}, \chi_V\delta^\vee \otimes \pi_0)_{\infty}$. Thus, we can write
\[
\sigma^\vee \hookrightarrow A^\vee \rtimes \Hom_{\GL_k(\F) \times G_{n-2k}}(J^{k-1}, \chi_V\delta^\vee \otimes \pi_0)_{\infty}
\]
from which, by looking at the contragradient (and using Proposition \ref{prop:Muic_isotype}), we arrive at
\[
A \rtimes \chi_W\langle|\cdot|^{\frac{l-3}{2}}, |\cdot|^{\frac{l+1}{2}-k}\rangle \rtimes \Theta_{l-2}(\pi_0) \twoheadrightarrow \sigma.
\]
\end{enumerate}
\end{proof}
We point out a special case of the above corollary, when $A$ is the trivial representation of the trivial group and $\sigma = \theta_l(\pi)$. Corollary \ref{cor:shaving} then shows we have
\[
\label{eq_shaving01}\tag{i}
\chi_W\delta \rtimes \Theta_{l}(\pi_0) \twoheadrightarrow \theta_l(\pi)
\]
unless $\delta \cong \textnormal{St}_k\nu^{\frac{l-k}{2}} \cong \langle|\cdot|^{\frac{l-1}{2}}, |\cdot|^{\frac{l+1}{2}-k}\rangle$. In that case, either (i) holds, or
we have
\[
\label{eq_shaving02}\tag{ii}
\chi_W\langle|\cdot|^{\frac{l-3}{2}}, |\cdot|^{\frac{l+1}{2}-k}\rangle \rtimes \Theta_{l-2}(\pi_0) \twoheadrightarrow \theta_l(\pi).
\]

\begin{Rem}
\label{rem:parentnotation}
At some point it will be useful to use the same notation for the outcomes of both options (i) and (ii) in Corollary \ref{cor:shaving}. With this in mind, we set
\[
(\delta) = 
\begin{cases}
\delta, &\text{if we used option (i)}\\
\langle|\cdot|^{\frac{l-3}{2}}, |\cdot|^{\frac{l+1}{2}-k}\rangle, &\text{if we used option (ii).}
\end{cases}
\]
\end{Rem}

\subsection{Discrete series and tempered representations}
\label{subs:discandtemp}
In this section we go over some of the important results concerning the theta lifts of discrete series and tempered representations. First, we recall the main results of Mui\'{c} \cite{muic2008structure} (Theorems 6.1 and 6.2), which give a complete description of theta lifts for discrete series representations, along with an insight into the structure of the full theta lift.
\begin{Thm}[6.1 and 6.2 in \cite{muic2008structure}]
\label{thm:Muic_disc}
Let $\sigma \in \Irr(G_n)$ be a discrete series representation. Set
\[
m_{\text{temp}}(\sigma)=
\begin{cases}
m(\sigma), &m(\sigma) > n + 1 + \epsilon\\
n + 1 + \epsilon, &m(\sigma) \leqslant n + 1 + \epsilon.
\end{cases}
\]
Then
\begin{enumerate}[(i)]
\item $\Theta(\sigma,m)$ is an irreducible tempered representation for $m(\sigma) \leqslant m \leqslant m_{\text{temp}}(\sigma)$.
\item If $m > m_{\text{temp}}(\sigma)$, then $\theta(\sigma, m)$ is the unique  irreducible (Langlands) quotient of
\[
\chi_W|\cdot|^{\frac{m-n-\epsilon - 1}{2}} \times \dotsc \times \chi_W|\cdot|^{\frac{m_{\text{temp}}(\sigma) - n - \epsilon + 1}{2}} \rtimes \theta(\sigma, m_{\text{temp}}(\sigma)).
\]
The remaining subquotients of $\Theta(\sigma,m)$ are either tempered, or equal to the Langlands quotient of
\[
\chi_W|\cdot|^{\frac{m-n-\epsilon - 1}{2}} \times \dotsc \times \chi_W|\cdot|^{\frac{m_1 - n -\epsilon + 1}{2}} \rtimes \sigma(m_1),
\]
where $\sigma(m_1)$ is a tempered irreducible subquotient of $\Theta(\sigma,m_1)$ for some $m > m_1 \geqslant m_{\text{temp}}(\sigma)$.
\end{enumerate}
\end{Thm}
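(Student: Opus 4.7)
The plan is to use Kudla's filtration together with Howe duality and the conservation relation: for part (i), show that only non-positive exponents can appear among the $\GL$-characters in the Jacquet support of $\Theta(\sigma, m)$ when $m \leq m_{\text{temp}}(\sigma)$, forcing temperedness; for part (ii), iteratively extract the positive-exponent characters that appear once $m > m_{\text{temp}}(\sigma)$ and assemble them into the Langlands data.

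For part (i), analyze the Jacquet modules of $\Theta(\sigma, m)$ via Kudla's filtration of $R_{Q_t}(\omega_{m, n})$, using Proposition \ref{prop:Muic_isotype} to identify the $\GL$-characters that can contribute. At $m = m(\sigma)$, the minimality of the first occurrence kills every layer that would attach a strictly positive-exponent character, because the corresponding term would, via $\Hom(\omega_{m-2a, n-2k}, -)$, produce a non-zero theta lift of a Jacquet piece of $\sigma$ at a strictly lower level, contradicting the definition of $m(\sigma)$. The conservation relation together with the discrete-series hypothesis excludes exponent zero as well, so $\Theta(\sigma, m(\sigma))$ is tempered, and irreducibility follows from Howe duality (Theorem \ref{thm:Howeduality}). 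For $m(\sigma) < m \leq m_{\text{temp}}(\sigma) = n+1+\epsilon$, induct on $m$: the new exponent that Kudla's filtration can introduce in passing from level $m - 2$ to level $m$ is $(m - n - \epsilon - 1)/2 \leq 0$ throughout this range, so temperedness and irreducibility persist.

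For part (ii), induct on $m - m_{\text{temp}}(\sigma) \geq 2$. In the base case $m = m_{\text{temp}}(\sigma) + 2$, the top layer of Kudla's filtration at $Q_1$ now carries a strictly positive exponent $(m_{\text{temp}} - n - \epsilon + 1)/2$, and Corollary \ref{cor:shaving} (with its option (i)) yields an epimorphism
$\chi_W |\cdot|^{(m_{\text{temp}} - n - \epsilon + 1)/2} \rtimes \theta(\sigma, m_{\text{temp}}(\sigma)) \twoheadrightarrow \theta(\sigma, m)$.
Since $\theta(\sigma, m_{\text{temp}}(\sigma))$ is tempered by (i), the source is a standard module, and its unique irreducible quotient must be $\theta(\sigma, m)$. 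The inductive step repeats the same argument: each increment peels off the next character in the chain, and because the exponents strictly decrease by $1$ at each step, the standard-module form is preserved. The classification of the remaining irreducible subquotients of $\Theta(\sigma, m)$ is then obtained by tracking, across all intermediate levels $m_1$, which tempered pieces $\sigma(m_1) \subseteq \Theta(\sigma, m_1)$ can contribute through the same filtration.

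The delicate point is the transition at $m = m_{\text{temp}}(\sigma)$, where option (ii) of Corollary \ref{cor:shaving} could in principle interfere and produce a shifted Langlands datum. Ruling it out -- and verifying that option (i) yields precisely the exponent predicted by the theorem -- relies on the discrete-series hypothesis on $\sigma$ and on Howe duality, which together force the relevant standard modules to be irreducible and their Langlands quotients to match. A related subtlety is showing that $\Theta(\sigma, m_{\text{temp}}(\sigma))$ is genuinely irreducible (not merely of finite length with a tempered Langlands quotient); this is where the conservation relation enters to pin down $m_{\text{temp}}(\sigma)$ as the precise transition point.
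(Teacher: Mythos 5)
The paper imports this result verbatim from Mui\'c's work \cite{muic2008structure} (Theorems 6.1 and 6.2) and gives no proof of its own, so there is no internal argument to compare against; in effect you are being asked to reproduce a substantial external theorem. Your sketch captures the right broad shape of Mui\'c's argument (Kudla's filtration, Casselman's criterion for temperedness, induction on $m$), but it has genuine gaps at exactly the points where the real work lies. In part (i), irreducibility of $\Theta(\sigma,m)$ does \emph{not} follow from Howe duality (Theorem~\ref{thm:Howeduality}) alone: that theorem only guarantees a unique irreducible quotient, which is compatible with $\Theta$ being of arbitrary length and non-semisimple. The missing step --- used explicitly elsewhere in the paper, e.g.~Case 1 of Section~\ref{sec:lifts} --- is to first establish that $\Theta$ is \emph{completely reducible}; only then does uniqueness of the quotient force irreducibility. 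Your sketch asserts irreducibility without establishing semi-simplicity, and your claim that ``minimality of $m(\sigma)$ kills every positive-exponent layer'' is the right heuristic but not an argument; the actual proof requires a detailed case-by-case Jacquet-module analysis at the first occurrence level.

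In part (ii) you treat option (ii) of Corollary~\ref{cor:shaving} as interference to be ``ruled out,'' but the theorem explicitly states that $\Theta(\sigma,m)$ \emph{does} contain further subquotients of the form $L(\chi_W|\cdot|^{\frac{m-n-\epsilon-1}{2}},\dotsc;\sigma(m_1))$ built from tempered subquotients $\sigma(m_1)$ of $\Theta(\sigma,m_1)$ at intermediate levels --- these are precisely the contributions of the lower layers of Kudla's filtration, i.e.~the option-(ii) route. The content of the theorem is that these extra subquotients are \emph{classified}, not that they are absent, so no argument can rule them out, and your sketch does not carry out the classification. Your assertion that Howe duality ``forces the relevant standard modules to be irreducible'' is also false: these standard modules are typically reducible (they have a proper Langlands quotient by construction), which is precisely why Langlands data are needed to pin down $\theta(\sigma,m)$. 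In short, the strategy is sensible in outline, but as written it closes neither the irreducibility claim in (i) nor the classification in (ii).
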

Note that the recent results of Atobe and Gan \cite{atobe2017local} on theta lifts of tempered representations subsume most of the aforeknown results on the lifts of discrete series. For the sake of brevity, we do not state the relevant theorems here; we shall however use them on more than one occasion in the following sections. For now, we state a useful auxiliary result concerning tempered representations \cite[Proposition 5.5, Lemma 6.4]{atobe2017local}:
\begin{Prop}
\label{prop:Lpackets}
Let $\pi \in \Irr(G(W_n))$ be such that $\Theta(\pi,V_m) \neq 0$.
\begin{enumerate}[(1)]
\item If one of the following is satisfied
\begin{enumerate}[(i)]
\item $\pi$ is tempered and $m \leqslant n+1+\epsilon$;
\item $\pi$ is in discrete series and $\Theta(\pi,V_m)$ is the first lift to the going-up tower,
\end{enumerate}
then all the irreducible subquotients of $\Theta(\pi,V_m)$ are tempered.
\item If all the irreducible subquotients of $\Theta(\pi,V_m)$ are tempered, then they all belong to the same $L$-packet.
\end{enumerate}
\end{Prop}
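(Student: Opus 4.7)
I would deduce both assertions by induction on the length of the tempered support of $\pi$, using Theorem \ref{thm:Muic_disc} as the base case and Corollary \ref{cor:shaving} as the inductive engine. Claim (1)(ii) drops out immediately from Theorem \ref{thm:Muic_disc}(i): the conservation relation $m^{\mathrm{up}}(\pi) + m^{\mathrm{down}}(\pi) = 2n + 2 + 2\epsilon$ together with $m^{\mathrm{down}}(\pi) \leq m^{\mathrm{up}}(\pi)$ forces $m^{\mathrm{up}}(\pi) \geq n + 1 + \epsilon$, so on the going-up tower $m(\pi) = m_{\mathrm{temp}}(\pi)$, whence Theorem \ref{thm:Muic_disc}(i) asserts that $\Theta(\pi, V_{m(\pi)})$ is itself irreducible tempered.

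For Claim (1)(i), the base case (when $\pi$ is in discrete series) is again Theorem \ref{thm:Muic_disc}(i), since $m \leq n + 1 + \epsilon \leq m_{\mathrm{temp}}(\pi)$. For the inductive step, Lemma \ref{lemma:temp_supp} combined with Lemma \ref{lemma:MVWinv} produces a discrete series $\delta$ of $\GL_k(\F)$ and a tempered $\pi_1 \in \Irr(G(W_{n-2k}))$ of strictly smaller tempered support with $\delta \rtimes \pi_1 \twoheadrightarrow \pi$. For any irreducible quotient $\sigma$ of $\Theta_l(\pi)$, Corollary \ref{cor:shaving} yields
\[
\chi_W (\delta) \rtimes \Theta_{l^\ast}(\pi_1) \twoheadrightarrow \sigma,
\]
with $(\delta)$ and $l^\ast$ as in Remark \ref{rem:parentnotation}. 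The inductive hypothesis applies to $\pi_1$ whenever the corresponding Witt-space bound holds; in option (i) this translates exactly to our hypothesis $m \leq n + 1 + \epsilon$. Since $\chi_W (\delta)$ is either a discrete series or sits on the boundary of temperedness, the induced representation---and hence its quotient $\sigma$---is tempered by Lemma \ref{lemma:goldberg}. To pass from irreducible quotients to arbitrary subquotients, one iterates the shaving along the cosocle filtration of $\Theta_l(\pi)$.

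For Claim (2), the iterated shaving argument shows that every irreducible subquotient of $\Theta_l(\pi)$ is a quotient of a common induced representation
\[
\chi_W(\delta_1) \times \dotsm \times \chi_W(\delta_k) \rtimes \tau,
\]
in which the pieces $\chi_W(\delta_i)$ and the tempered $\tau$ are determined by the tempered support of $\pi$ together with the discrete-series base case. Lemma \ref{lemma:temp_supp} then forces every such subquotient to share the same tempered support, and since under the LLC a tempered $L$-packet is characterised by its underlying parameter $\phi$ (equivalently, by the tempered support), all subquotients lie in a single $L$-packet.

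\textbf{Main obstacle.} The principal difficulty lies in option (ii) of Corollary \ref{cor:shaving}: when $\delta \cong \mathrm{St}_k\nu^{(l-k)/2}$, the lift descends one rung in the Witt tower, and the resulting bound $m^\ast = m - 2k + 2$ is not automatically controlled by $m \leq n + 1 + \epsilon$. One must show, in this special case, that the shortened segment $\chi_W \langle |\cdot|^{(l-3)/2}, |\cdot|^{(l+1)/2 - k} \rangle$ still has tempered-boundary exponents and that its composition with $\Theta_{l-2}(\pi_1)$ remains tempered by direct inspection of the Steinberg structure. Managing this Steinberg-versus-Witt-descent interplay, and keeping track of what it does to cosocle filtrations, is the most delicate part of the argument.
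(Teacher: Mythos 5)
The paper does not prove this proposition itself: it cites it from Atobe--Gan, namely \cite[Proposition 5.5, Lemma 6.4]{atobe2017local}. So there is no internal proof to compare your argument against, and I will judge it on its own merits.

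There are two genuine gaps, and the obstacle you flag is not the real one. First, Corollary \ref{cor:shaving} (and likewise its special case below it) produces a surjection onto an \emph{irreducible quotient} $\sigma$ of $A\rtimes\Theta_l(\pi)$; taking $A$ trivial this only controls $\theta_l(\pi)$, which by Howe duality is the unique irreducible quotient of $\Theta_l(\pi)$. Arbitrary irreducible subquotients of $\Theta_l(\pi)$ are not quotients of $\Theta_l(\pi)$, and your proposed fix, iterating ``along the cosocle filtration,'' does not repair this: the successive filtration pieces $F_i/F_{i+1}$ are not quotients of $\Theta_l(\pi)$, so Corollary \ref{cor:shaving} gives no handle on them. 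What the paper does in analogous situations (e.g.\ Case 1 of Section \ref{sec:lifts}) is to use Corollary \ref{cor:theta_epi}, which gives a surjection onto the \emph{full} lift $\Theta_l(\pi)$ from a completely reducible representation with tempered composition factors, from which temperedness of every subquotient follows. That route, however, is blocked exactly when some discrete series in the tempered support of $\pi$ equals $\textnormal{St}_l$.

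Second, in option (ii) of Corollary \ref{cor:shaving} (with $\delta=\textnormal{St}_l$, so $k=l$) the factor that appears is $\chi_W\langle|\cdot|^{(l-3)/2},|\cdot|^{(1-l)/2}\rangle = \chi_W\textnormal{St}_{l-1}\nu^{-1/2}$, whose central exponent is $-1/2\neq 0$. This is \emph{not} ``on the boundary of temperedness''; it is strictly non-tempered, Lemma \ref{lemma:goldberg} does not apply, and inducing it against a tempered representation is not tempered (its Langlands quotient $L(\chi_W\textnormal{St}_{l-1}\nu^{1/2};\tau)$ is non-tempered). So temperedness of $\sigma$ does not follow, and some additional argument is needed to rule out this branch entirely when $\pi$ is tempered with $m\leq n+1+\epsilon$; this is the genuine difficulty, and it is what a direct Jacquet-module/Casselman-criterion analysis (as in Atobe--Gan's proof) handles. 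By contrast, the obstacle you name---that $m^* = m-2k+2$ may fall outside the tempered range---is not actually a problem: option (ii) forces $\delta_1=\textnormal{St}_l$ with $l>0$, hence $m<n+\epsilon$, and then $m^*\leq (n-2l)+1+\epsilon$ holds automatically. Part (2) inherits the same gap, since your common-inducing-datum argument again rests on the shaving corollary controlling arbitrary subquotients.
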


\section{First occurrence}
\label{sec:first}
In this section we describe the first occurrence index of a $\chi$-generic representation $\pi \in \Irr(G(W_n))$. We fix $\epsilon = \pm 1$; if $\epsilon = 1$ we assume that a pair of target Witt towers $\mathcal{V}^+, \mathcal{V}^-$ is fixed. Recall that $m^{\text{down}}(\pi)$ denotes the lower of the two possible first occurrence indices. We set\footnote{This notation is motivated by the one used by Atobe and Gan, but does not have quite the same meaning as in the original paper \cite{atobe2017local}.}
\[
l(\pi) = n + \epsilon - m^{\text{down}}(\pi).
\]
By the standard module conjecture, $\pi$ is isomorphic to its standard module:
\[
\pi \cong \chi_V\delta_r\nu^{s_r} \times \dotsm \times \chi_V\delta_1\nu^{s_1} \rtimes \pi_0,
\]
where $\delta_i \in \Irr(\GL_{n_i}(\F))$ ($i = 1,\dots,r$) are discrete series representations, $s_r\geqslant\dotsb\geqslant s_1 > 0$, and $\pi_0 \in \Irr (G(W_{n_0}))$ is tempered. Note that $\pi_0$ is also $\chi$-generic by the hereditary property.

The following theorem determines the first occurrence of $\pi$. Note that the proof does not use the generic property; therefore, the theorem describes the first occurrence index for a much wider class of representations---those isomorphic to their standard modules.
\begin{Thm}
\label{thm:first_occ}
Let $\pi \in \Irr(G(W_n))$ be a representation isomorphic to its standard module,
\[
\pi \cong \chi_V\delta_r\nu^{s_r} \times \dotsm \times \chi_V\delta_1\nu^{s_1} \rtimes \pi_0.
\]
Then $l(\pi) = l(\pi_0)$.
\end{Thm}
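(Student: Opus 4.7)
The plan is to proceed by induction on the number of factors $r$ in the standard module, reducing to the single-step case $\pi \cong \chi_V\delta\nu^s \rtimes \pi'$, where $\pi' \in \Irr(G(W_{n-2k}))$ is itself isomorphic to its own standard module. This reduction is legitimate because Proposition \ref{prop:std_mod_reduc}, applied contrapositively to the irreducibility of $\pi$, forces every sub-collection of factors to induce irreducibly; hence $\pi'$ is irreducible and the inductive hypothesis gives $l(\pi') = l(\pi_0)$. It therefore suffices to establish $l(\pi) = l(\pi')$ in the one-step setting, which I split into a vanishing inequality and an existence inequality.

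For the vanishing direction $l(\pi) \leq l(\pi')$, equivalent to $m^{\text{down}}(\pi) \geq m^{\text{down}}(\pi') + 2k$, I invoke the subrepresentation form $\pi \hookrightarrow \chi_V\delta^\vee\nu^{-s} \rtimes \pi'$ from Lemma \ref{lemma:MVWinv}. By Lemma \ref{lemma:ThetaisHom} and Frobenius reciprocity, $\Theta(\pi, V_m)^\vee$ embeds into $\Hom_{\GL_k(\F) \times G_{n-2k}}(R_{P_k}(\omega_{m,n}), \chi_V\delta^\vee\nu^{-s} \otimes \pi')_\infty$. Kudla's filtration (Theorem \ref{tm:Kudla}) together with Proposition \ref{prop:Muic_isotype} then shows that all layers $J^a$ with $a \leq k-2$ annihilate this Hom, while the $a=k$ layer involves $\Theta(\pi', V_{m-2k})$ and vanishes once $m - 2k < m^{\text{down}}(\pi')$. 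The only possible remaining contribution comes from $a=k-1$, which is non-zero only in the Steinberg edge case $\delta\nu^s \cong \textnormal{St}_k\nu^{(l-k)/2}$ with $l = n - m + \epsilon$; this condition specifies $m$ uniquely, so outside the edge case the desired vanishing follows.

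For the existence direction $l(\pi) \geq l(\pi')$, equivalent to $\Theta(\pi, V_{m^{\text{down}}(\pi')+2k}) \neq 0$, I exploit the fact that $\pi$ coincides with its standard module $\chi_V\delta\nu^s \rtimes \pi'$ to identify $\Hom_{G_n}(\omega_{m,n}, \pi)$ with $\Hom_{\GL_k(\F) \times G_{n-2k}}(R_{P_k}(\omega_{m,n}), \chi_V\delta\nu^s \otimes \pi')$ via Frobenius. The deepest layer $J^k = R^k$ of Kudla's filtration contributes, by Proposition \ref{prop:Muic_isotype}, the non-zero space $\chi_W^{-1}\delta\nu^s \rtimes \Theta(\pi', V_{m^{\text{down}}(\pi')})^\vee$, which is non-zero by the definition of $m^{\text{down}}(\pi')$. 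Combining this with the vanishing of $\Hom(J^a, \chi_V\delta\nu^s \otimes \pi')$ for $a = 0, \dots, k-2$ and the long exact sequences attached to $0 \to R^{a+1} \to R^a \to J^a \to 0$, I propagate the non-vanishing from $R^k$ up to $R^0 = R_{P_k}(\omega_{m,n})$, producing the required non-zero map $\omega_{m,n} \to \pi$.

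The principal obstacle I expect is twofold. First, the Steinberg edge case $\delta \cong \textnormal{St}_k$ with $s$ of a very particular value: the $a=k-1$ layer of Kudla's filtration contributes non-trivially in both directions, and one must verify that its contribution is consistent with, and does not violate, the claimed equality $l(\pi) = l(\pi')$---typically by combining the irreducibility constraints from Proposition \ref{prop:std_mod_reduc} with the explicit description of tempered lifts from Theorem \ref{thm:Muic_disc} and Proposition \ref{prop:Lpackets}. Second, in the existence argument the climb from $\Hom(R^k, \cdot)$ to $\Hom(R^0, \cdot)$ requires controlling potential $\mathrm{Ext}^1$ obstructions at each stage of the filtration; these must be shown to vanish using the same structural input about $\Theta(\pi', V_{m^{\text{down}}(\pi')})$ coming from Theorem \ref{thm:Muic_disc} and Proposition \ref{prop:Lpackets}.
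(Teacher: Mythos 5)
Your vanishing direction recovers one half of the paper's argument, but your existence direction has a real gap, and you are missing the paper's key structural idea: the conservation relation.

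\textbf{On the vanishing direction.} Your argument that $\Hom(J^a, \chi_V\delta^\vee\nu^{-s}\otimes\pi') = 0$ for $a\leq k-2$ and that the $a=k$ layer vanishes once $m-2k < m^{\text{down}}(\pi')$ is in substance the paper's application of Corollary \ref{cor:theta_epi}. You flag the Steinberg edge case $\delta\nu^s\cong\textnormal{St}_k\nu^{(l-k)/2}$ as an obstacle but do not resolve it; the paper disposes of it cleanly by observing that since $\pi$ equals its irreducible standard module, one may replace $\chi_V\delta_i\nu^{s_i}$ by $\chi_V\delta_i^\vee\nu^{-s_i}$ in the surjection (\textasteriskcentered), sidestepping Corollary \ref{cor:theta_epi}'s restriction. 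This is a fix you should incorporate rather than defer.

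\textbf{On the existence direction.} This is where the proposal breaks. You want to propagate $\Hom(J^k,\chi_V\delta\nu^s\otimes\pi')\neq 0$ upward to $\Hom(R^0,\chi_V\delta\nu^s\otimes\pi')\neq 0$ via the exact sequences for $0\to R^{a+1}\to R^a\to J^a\to 0$. But the vanishing $\Hom(J^a,X)=0$ for $a\leq k-2$ yields \emph{injections} $\Hom(R^a,X)\hookrightarrow\Hom(R^{a+1},X)$, which run in the wrong direction: they allow you to go from non-vanishing at the top down to the bottom, not the other way. Producing a nonzero functional on $R^0$ from one on the deepest layer $R^k = J^k$ requires extending a map along the inclusion $R^k\hookrightarrow R^0$, and the obstruction to this lives in $\mathrm{Ext}^1(R^0/R^k,X)$. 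You acknowledge this but have no mechanism to kill it, and none is readily available; in fact the paper never directly proves that $\Theta_l(\pi_0)\neq 0$ implies $\Theta_l(\pi)\neq 0$.

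\textbf{What the paper does instead.} The paper proves \emph{both} inequalities as vanishing statements, one on each tower, and converts them to the required equality via the conservation relation. Specifically: on the going-up tower, $\Theta_{-l(\pi_0)}(\pi_0)=0$ by definition, so the surjection from Corollary \ref{cor:theta_epi} gives $\Theta_{-l(\pi_0)}(\pi)=0$; unwinding through the conservation relation this yields $l(\pi)\geq l(\pi_0)$. On the going-down tower, $\Theta_{l(\pi_0)+2}(\pi_0)=0$, giving $\Theta_{l(\pi_0)+2}(\pi)=0$ and hence $l(\pi)\leq l(\pi_0)$. This avoids any need to establish non-vanishing of a theta lift or control $\mathrm{Ext}^1$. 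Your proposal makes no mention of the conservation relation, which is precisely the tool that turns your hard ``existence'' inequality into another tractable vanishing statement.

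Your reduction to the one-step case via Proposition \ref{prop:std_mod_reduc} is sound but unnecessary; the paper applies Corollary \ref{cor:theta_epi} repeatedly to the full product in one stroke.
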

Since the first occurrence of tempered representations is described by \cite[Theorem 4.1]{atobe2017local}, so that $l(\pi_0)$ is known, this is enough to infer the first occurrence index of $\pi$. For $\chi$-generic representations, we can say a bit more: the alternating property of Theorem 4.1 in \cite{atobe2017local} is never satisfied. To verify this, it suffices to temporarily switch the choice of the Whittaker datum needed for the Local Langlands Correspondence from the one fixed in \S \ref{subs:LLC} to the one corresponding to $\chi$; naturally this does not affect the first occurence index. Desideratum 1 of \cite{atobe2016uniqueness} then shows that the character $\eta$ in the Langlands parameter of $\pi_0$ is trivial, so there is no alternation. Alternatively, to describe $\eta$ and verify that there is no alternation, but without switching the Whittaker datum, one may use the results of \cite{kaletha2014genericity}. In short, since the alternating condition fails, there are only two possibilities:
\[
l(\pi_0) =
\begin{cases}
1, &\text{if }\phi\text{ contains } \chi_V;\\
-1, &\text{otherwise}
\end{cases}
\]
Of course, this only gives us $m^{\text{down}}(\pi)$, but we can get $m^{\text{up}}(\pi)$ using the conservation relation.

\begin{Rem}
\label{rem:notation}
Before proving the theorem, we remind the reader of the notation: recall that $\Theta_{l}(\pi) = \Theta(\pi,n+\epsilon-l)$. Combined with our definition of $l(\pi)$ and the conservation relation, this means that $\Theta_{l}(\pi)$ denotes the first non-zero lift of $\pi$ precisely when
\[
l =
\begin{cases}
l(\pi), \quad &\text{in the going-down tower};\\
-l(\pi)-2, \quad &\text{in the going-up tower}.
\end{cases}
\]
\end{Rem}

\begin{proof}[Proof of Theorem \ref{thm:first_occ}]
We first consider the going-up tower with respect to $\pi_0$. We compute $\Theta_{l}(\pi)$ with $l = -l(\pi_0)$. Since $s_i > 0$, we know that $\delta_i\nu^{s_i} \neq \textnormal{St}_k\nu^\frac{l-k}{2}$ for all $i = 1, \dotsc, r$. This allows us to use Corollary \ref{cor:theta_epi}; repeatedly applying it to
\[
\label{loc31:epi}\tag{\textasteriskcentered}
\chi_V\delta_r\nu^{s_r} \times \dotsm \times \chi_V\delta_1\nu^{s_1} \rtimes \pi_0 \twoheadrightarrow \pi
\]
we get
\[
\chi_W\delta_r\nu^{s_r} \times \dotsm \times \chi_W\delta_1\nu^{s_1} \rtimes \Theta_l(\pi_0) \twoheadrightarrow \Theta_l(\pi).
\]
This being the going-up tower, we have $\Theta_l(\pi_0) = 0$ (see Remark \ref{rem:notation}). Since the above map is surjective, this implies $\Theta_l(\pi) = 0$. We deduce that
\begin{enumerate}[(i)]
\item the going-up tower for $\pi$ is the same as for $\pi_0$ (since $\Theta_l(\pi) = 0$ with $l<0$, this must be the going-up tower for $\pi$);
\item we have $-l(\pi) \leqslant l$, i.e.~$l(\pi) \geqslant l(\pi_0)$.
\end{enumerate}
Now set $l = l(\pi_0)+2$; this time we consider the going-down tower with respect to $\pi_0$. We repeat the above argument to show that
\[
\chi_W\delta_r\nu^{s_r} \times \dotsm \times \chi_W\delta_1\nu^{s_1} \rtimes \Theta_l(\pi_0) \twoheadrightarrow \Theta_l(\pi).
\]
In this case it can happen that for some $i$ we have $\delta_i\nu^{s_i} = \textnormal{St}_k\nu^\frac{l-k}{2}$. To justify the use of Corollary \ref{cor:theta_epi}, we make the following observation: since $\chi_V\delta_r\nu^{s_r} \times \dotsm \times \chi_V\delta_1\nu^{s_1} \rtimes \pi_0$ is irreducible, we have
\[
\chi_V\delta_r\nu^{s_r} \times \dotsm \times \chi_V\delta_i\nu^{s_i}  \times\dotsm \times \chi_V\delta_1\nu^{s_1} \rtimes \pi_0 \cong \chi_V\delta_r\nu^{s_r} \times \dotsm \times \chi_V\delta_i^\vee\nu^{-s_i}  \times\dotsm \times \chi_V\delta_1\nu^{s_1} \rtimes \pi_0.
\]
Indeed, the irreducibility guarantees that $\chi_V\delta_i\nu^{s_i}$ and $\chi_V\delta_i^\vee\nu^{-s_i}$ may freely switch places with the rest of the factors, and that $\chi_V\delta_i\nu^{s_i} \rtimes \pi_0 \cong \chi_V\delta_i^\vee\nu^{-s_i} \rtimes \pi_0$.
This means that we can replace $\delta_i\nu^{s_i}$ with $\delta_i^\vee\nu^{-s_i} $ in \eqref{loc31:epi} and thus bypass the restriction of Corollary \ref{cor:theta_epi}.
Since $l > l(\pi_0)$, we have $\Theta_l(\pi_0) = 0$, so the above map implies $\Theta_l(\pi) = 0$ (see Remark \ref{rem:notation}). This means that $l(\pi) < l$, i.e.~$l(\pi) \leqslant l(\pi_0)$.

Combining the two inequalities we get the desired result, $l(\pi) = l(\pi_0)$.
It is worth mentioning the following fact obtained in the proof (see (i) above): the going-up (resp.~going-down) tower for $\pi$ coincides with the going-up (resp.~going-down) tower for $\pi_0$.
\end{proof}
One useful application of Theorem \ref{thm:first_occ} is given by the following lemma, which we use in subsequent sections. Although the result in question is only a special case of the results of \cite{muic2005reducibility}, we sketch the proof to demonstrate how theta correspondence can be used to approach general representation-theoretic problems.
\begin{Lem}
\label{lem:no_ex}
Let $\pi_0 \in \Irr(G(W_{n_0}))$ be a tempered representation with parameter $\phi$ and $l(\pi_0)=l > 0$. Set 
$\delta = \langle |\cdot|^{\frac{l+1}{2}}, |\cdot|^{a} \rangle$ where $a\in \langle\frac{1-l}{2},\frac{l+1}{2}]$ such that $\frac{l+1}{2}-a \in \mathbb{Z}$. Then
\[
\chi_V\delta \rtimes \pi_0
\]
reduces. The above representation is also reducible if $a = \frac{1-l}{2}$ and the multiplicity of $\chi_VS_l$ in $\phi$ is odd.
\end{Lem}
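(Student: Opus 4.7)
The plan is to argue by contradiction. Suppose the standard module $\pi := \chi_V\delta \rtimes \pi_0$ is irreducible. First I note that, under the hypothesis $a > (1-l)/2$, the exponent of the unitary part of $\chi_V\delta$ is $s = (2a+l+1)/4 > 0$, so $\chi_V\delta \rtimes \pi_0$ is genuinely in the standard form required by the Langlands classification. Since it is assumed irreducible, it coincides with its Langlands quotient $L(\chi_V\delta;\pi_0)$, and Theorem~\ref{thm:first_occ} applies to give $l(\pi)=l(\pi_0)=l$. In particular $\Theta_l(\pi)\ne 0$ and $\Theta_{l'}(\pi)=0$ for all $l'>l$.

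Next, I want to transport the induction $\chi_V\delta\rtimes\pi_0\twoheadrightarrow \pi$ across the theta lift. The top exponent of $\delta=\langle|\cdot|^{(l+1)/2},|\cdot|^a\rangle$ is $(l+1)/2$, whereas the top exponent of any $\textnormal{St}_k\nu^{(l-k)/2}$ is $(l-1)/2$; hence $\delta \not\cong \textnormal{St}_k\nu^{(l-k)/2}$ and Corollary~\ref{cor:theta_epi} produces a surjection
\[
\chi_W\delta \rtimes \Theta_l(\pi_0)\ \twoheadrightarrow\ \Theta_l(\pi). \quad(\star)
\]
Because $m(\pi_0)=n_0+\epsilon-l < n_0+1+\epsilon$, Proposition~\ref{prop:Lpackets}(1)(i) tells us that $\Theta_l(\pi_0)$ is a semisimple tempered representation whose irreducible constituents all lie in a single $L$-packet; its $L$-parameter $\phi'$ is the one dictated by Atobe--Gan, namely $\phi$ with one copy of $\chi_V S_l$ removed. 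This explicit description lets us analyze the right-hand inducing data of $(\star)$ directly.

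The decisive step, and the place I expect the real work, is to show that $\chi_W\delta\rtimes\theta_l(\pi_0)$ admits two non-isomorphic irreducible quotients; this will contradict the uniqueness of the small theta lift $\theta_l(\pi)$ forced by Howe duality, since both quotients would need to map to irreducible quotients of $\Theta_l(\pi)$ via $(\star)$. To detect this splitting I would compare the two presentations of $\pi$ coming from Lemma~\ref{lemma:MVWinv}, namely $\chi_V\delta\rtimes\pi_0\twoheadrightarrow\pi$ and $\chi_V\delta^\vee\rtimes\pi_0\twoheadrightarrow\pi$ (the latter arising because the assumed irreducibility gives $\chi_V\delta\rtimes\pi_0\cong\chi_V\delta^\vee\rtimes\pi_0$), and track the resulting two surjections onto $\Theta_l(\pi)$ through Kudla's filtration. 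Because $\delta$'s segment contains the edge exponent $(l+1)/2$, which is precisely where $\chi_W\,|\cdot|^{(l+1)/2}\rtimes\theta_l(\pi_0)$ reduces (this being exactly the content of the lemma one level down, available by induction on the length of the segment), these two data produce distinct Langlands quotients --- the desired contradiction.

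For the supplementary statement ($a=(1-l)/2$, so $\delta=\textnormal{St}_{l+1}\nu^{1/2}\cong \textnormal{St}_k\nu^{(l-k)/2}$ with $k=l+1$), Corollary~\ref{cor:theta_epi} is unavailable and one must use Corollary~\ref{cor:shaving} in its place. Running the contradiction argument through that corollary produces two branches, (i) and (ii), and the parity hypothesis on the multiplicity of $\chi_V S_l$ in $\phi$ is used to rule out the exceptional branch (ii) by a dimension/parameter count on $\theta_{l-2}(\pi_0)$ versus $\theta_l(\pi_0)$ (compare Proposition~\ref{prop:Lpackets}(2) and the Atobe--Gan recipe). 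The chief obstacle throughout is keeping the parameter bookkeeping straight so that the splitting of $\chi_W\delta\rtimes\theta_l(\pi_0)$ is visible from the structure of $\phi'$.
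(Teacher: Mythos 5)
Your proposal correctly sets up the contradiction framework (assume $\chi_V\delta\rtimes\pi_0$ is irreducible, so $\pi=L(\chi_V\delta;\pi_0)$ and Theorem~\ref{thm:first_occ} gives $l(\pi)=l$), but the ``decisive step'' contains a genuine gap that the rest of the argument cannot repair. Showing that $\chi_W\delta\rtimes\theta_l(\pi_0)$ has two non-isomorphic irreducible quotients does \emph{not} contradict Howe duality via the surjection $(\star)$: $(\star)$ is a single epimorphism $\chi_W\delta\rtimes\Theta_l(\pi_0)\twoheadrightarrow\Theta_l(\pi)$, and there is no reason both irreducible quotients of the left-hand side must descend to (distinct) irreducible quotients of $\Theta_l(\pi)$ — one of them can simply land in the kernel. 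So even if you had established the putative splitting, no contradiction would follow. Moreover, the splitting itself is not established: you appeal to ``the lemma one level down'' asserting that $\chi_W|\cdot|^{(l+1)/2}\rtimes\theta_l(\pi_0)$ reduces, but this base case of your proposed induction is never proved (note also that $\theta_l(\pi_0)$ has a parameter with $\chi_WS_l$-multiplicity shifted by one, so its first occurrence data differ from $\pi_0$'s, and the lemma does not literally apply to it). Comparing $\chi_V\delta\rtimes\pi_0\cong\chi_V\delta^\vee\rtimes\pi_0$ on the lift side is compatible with $\theta_l(\pi)$ being the Langlands quotient of $\chi_W\delta\rtimes\theta$ and simultaneously a submodule of $\chi_W\delta\rtimes\theta^\vee$ — no contradiction.

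The paper's actual proof goes through the going-\emph{up} tower, not the going-down one, and uses a much more direct mechanism. It factors the standard module as
\[
\chi_V\langle|\cdot|^{\frac{l-1}{2}},|\cdot|^a\rangle\times\chi_V|\cdot|^{\frac{l+1}{2}}\rtimes\pi_0\twoheadrightarrow\pi,
\]
identifies (using the cuspidal support of the standard module) the participating subquotient as $\pi'=L(\chi_V|\cdot|^{(l+1)/2};\pi_0)$, and then invokes Theorem~4.5 of Atobe--Gan to recognize $\pi'$ as $\theta_{-l-2}(\theta_l(\pi_0))$ on the going-up tower. From $\theta_{l+2}(\pi')=\theta_l(\pi_0)\neq 0$ one reads off $l(\pi')\geq l+2$, so by conservation $\Theta_{-l-2}(\pi')=0$ on the going-up tower; a single application of Corollary~\ref{cor:theta_epi} then forces $\Theta_{-l-2}(\pi)=0$ there. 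This is an \emph{unconditional} vanishing statement about $\pi=L(\chi_V\delta;\pi_0)$, and it contradicts conservation precisely when $l(\pi)=l$, i.e.\ when the standard module is assumed irreducible. If you want to salvage your approach you would need to replace the handwave about multiple quotients with something like the paper's explicit identification of a theta lift on the going-up tower; without that, the argument does not close.
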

\begin{proof}
We omit the proof of the second statement, which concerns $a = \frac{1-l}{2}$. It may be verified by the results of \cite{muic2005reducibility}. For the main part of the statement, consider the representation $\pi = L(\chi_V\delta;\pi_0)$. Starting from $\chi_V\delta \rtimes \pi_0 \twoheadrightarrow \pi$ and using $\langle |\cdot|^{\frac{l-1}{2}}, |\cdot|^{a} \rangle \times |\cdot|^{\frac{l+1}{2}}\twoheadrightarrow \delta$, we get
\[
\chi_V\langle |\cdot|^{\frac{l-1}{2}}, |\cdot|^{a} \rangle \times \chi_V|\cdot|^{\frac{l+1}{2}} \rtimes \pi_0 \twoheadrightarrow \pi
\]
(if $a=\frac{l+1}{2}$, we omit $\langle |\cdot|^{\frac{l-1}{2}}, |\cdot|^{a} \rangle$).
We now observe that the irreducible subquotient $\pi'$ of $\chi_V|\cdot|^{\frac{l+1}{2}} \rtimes \pi_0$ which participates in the above epimorphism is equal to $L(\chi_V|\cdot|^{\frac{l+1}{2}};\pi_0)$. Indeed, comparing the left-hand side of the above epimorphism with the standard module of $\pi$ shows that $\pi'$ is non-tempered and that the $\GL$-part of the standard module of $\pi'$ has cuspidal support $\chi_V|\cdot|^\frac{l+1}{2}$. The only irreducible subquotient of $\chi_V|\cdot|^{\frac{l+1}{2}}\rtimes \pi_0$ with this property is $L(\chi_V|\cdot|^{\frac{l+1}{2}};\pi_0)$. Thus $
\chi_V\langle |\cdot|^{\frac{l-1}{2}}, |\cdot|^{a} \rangle \rtimes \pi' \twoheadrightarrow \pi$
becomes
\[
\chi_V\langle |\cdot|^{\frac{l-1}{2}}, |\cdot|^{a} \rangle \rtimes L(\chi_V|\cdot|^{\frac{l+1}{2}};\pi_0) \twoheadrightarrow \pi
\]
Theorem 4.5 of \cite{atobe2017local} now shows that $\pi' = L(\chi_V|\cdot|^{\frac{l+1}{2}};\pi_0)$ is equal to $\theta_{-l-2}(\theta_l(\pi_0))$ (lifting to level $-l-2$ on the going-up tower). In particular, $\theta_{l+2}(\pi') = \theta_l(\pi_0) \neq 0$, so that $l(\pi') \geq l+2$. This implies, by the conservation relation, that $\Theta_{-l-2}(\pi') = 0$ on the going-up tower. By Corollary \ref{cor:theta_epi} we have
\[
\chi_W\langle |\cdot|^{\frac{l-1}{2}}, |\cdot|^{a} \rangle \rtimes \Theta_{-l-2}(\pi') \twoheadrightarrow \Theta_{-l-2}(\pi)
\]
on the going up tower. Since $\Theta_{-l-2}(\pi')=0$, this shows $\Theta_{-l-2}(\pi)=0$.
 
Now assume that $\chi_V\delta \rtimes \pi_0$ is irreducible. Theorem \ref{thm:first_occ} then shows that $l(\pi) = l$. By the conservation relation, this implies $\theta_{-l-2}(\pi)$ is the first non-zero lift on the going-up tower. This contradicts $\Theta_{-l-2}(\pi)=0$; therefore, $\chi_V\delta \rtimes \pi_0$ cannot be irreducible.
\end{proof}

\section{The lifts}
\label{sec:lifts}
We are now ready to state the main result of this paper. The following theorem fully describes the theta lifts of a $\chi$-generic irreducible representation of $\Irr(G(W_n))$.

\begin{Thm}
\label{thm:lifts}
Let $\pi$ be an irreducible $\chi$-generic representation of $G(W_n)$ isomorphic to its standard module,
\[
\chi_V\delta_r\nu^{s_r} \times \dotsm \times \chi_V\delta_1\nu^{s_1} \rtimes \pi_0.
\]
Let $l$ be an odd integer such that $\theta_{l}(\pi) \neq 0$. Then
\[
\chi_W\delta_r\nu^{s_r} \times \dotsm \times \chi_W\delta_1\nu^{s_1} \rtimes \theta_{l}(\pi_0) \twoheadrightarrow \theta_{l}(\pi).
\]
Furthermore, if $\theta_{l}(\pi_0) = L(\chi_W\delta_k'\nu^{s_k'} \times \dotsm \times \chi_W\delta_1'\nu^{s_1'} \rtimes \tau)$, then $\theta_{l}(\pi)$ is uniquely determined by
\[
\theta_{l}(\pi) = L(\chi_W\delta_r\nu^{s_r}, \dotsc, \chi_W\delta_1\nu^{s_1}, \chi_W\delta_k'\nu^{s_k'}, \dotsc, \chi_W\delta_1'\nu^{s_1'}; \tau).
\]
\end{Thm}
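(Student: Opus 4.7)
I would proceed by induction on $r$, the number of essentially square-integrable factors in the standard module of $\pi$. The base case $r = 0$, in which $\pi = \pi_0$ is itself tempered, is covered by the Atobe--Gan description of tempered theta lifts recalled in \S\ref{subs:discandtemp}. For the inductive step, set $\pi' := \chi_V\delta_{r-1}\nu^{s_{r-1}} \times \dotsm \times \chi_V\delta_1\nu^{s_1} \rtimes \pi_0$; Theorem \ref{thm:gen_properties}(iii) together with the irreducibility of the standard module of $\pi$ give $\pi' \in \Irr(G(W_{n'}))$ with $\pi \cong \chi_V\delta_r\nu^{s_r} \rtimes \pi'$, heredity (Theorem \ref{thm:gen_properties}(i)) keeps $\pi'$ $\chi$-generic, and Theorem \ref{thm:first_occ} gives $l(\pi') = l(\pi_0) \geq l$, so the inductive hypothesis applies to $\pi'$.

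For the surjection, I apply Corollary \ref{cor:shaving} (with $A$ trivial, $\sigma = \theta_l(\pi)$, $\delta = \delta_r\nu^{s_r}$) to $\chi_V\delta_r\nu^{s_r} \rtimes \pi' \twoheadrightarrow \pi$. A crucial simplification in the $\chi$-generic setting is that the Steinberg exclusion $\delta_r\nu^{s_r} \cong \textnormal{St}_k\nu^{(l-k)/2}$ cannot occur: it would force $l \geq 3$ (via $s_r = (l-k)/2 > 0$ and $k \geq 1$), whereas the remark after Theorem \ref{thm:first_occ} gives $l(\pi_0) \in \{\pm 1\}$ for $\chi$-generic $\pi_0$, so $\theta_l(\pi)\neq 0$ forces $l \leq l(\pi) = l(\pi_0) \leq 1$. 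Hence only option (i) of Corollary \ref{cor:shaving} is available, producing a surjection $\chi_W\delta_r\nu^{s_r} \rtimes \Theta_l(\pi') \twoheadrightarrow \theta_l(\pi)$, which by a cuspidal-support / Bernstein-component argument factors through $\chi_W\delta_r\nu^{s_r} \rtimes \theta_l(\pi') \twoheadrightarrow \theta_l(\pi)$. Inducing the inductive-hypothesis surjection $\chi_W\delta_{r-1}\nu^{s_{r-1}} \times \dotsm \times \chi_W\delta_1\nu^{s_1} \rtimes \theta_l(\pi_0) \twoheadrightarrow \theta_l(\pi')$ with $\chi_W\delta_r\nu^{s_r}$ and composing delivers the first claim of the theorem.

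For the Langlands identification, I further compose with the natural surjection from the sorted standard module of $\theta_l(\pi_0)$ onto $\theta_l(\pi_0)$ to obtain
\[
\chi_W\delta_r\nu^{s_r} \times \dotsm \times \chi_W\delta_1\nu^{s_1} \times \chi_W\delta_k'\nu^{s_k'} \times \dotsm \times \chi_W\delta_1'\nu^{s_1'} \rtimes \tau \twoheadrightarrow \theta_l(\pi).
\]
After reordering the $\GL$ factors into decreasing exponents, the left-hand side is a standard module in Langlands form whose unique irreducible quotient is the prescribed $L(\chi_W\delta_r\nu^{s_r}, \dotsc, \chi_W\delta_1\nu^{s_1}, \chi_W\delta_k'\nu^{s_k'}, \dotsc, \chi_W\delta_1'\nu^{s_1'}; \tau)$. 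To conclude that $\theta_l(\pi)$ coincides with this Langlands quotient, rather than some other irreducible subquotient of the unsorted induction, I invoke preservation of $\chi$-genericity under the theta lift for this dual pair, so that $\theta_l(\pi)$ is itself $\chi$-generic; Theorem \ref{thm:gen_properties}(iii) then forces its standard module to be irreducible and to coincide with the displayed one.

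The step I expect to be most delicate is this last identification: being merely an irreducible quotient of an unsorted induction does not in itself pin $\theta_l(\pi)$ down as the Langlands quotient of the sorted standard module, and the cleanest conceptual route through is preservation of genericity under the theta correspondence. Should that fact not be available in the form required here, a fall-back is an explicit computation of the relevant piece of $\mu^*(\theta_l(\pi))$ via Kudla's filtration (Theorem \ref{tm:Kudla}) and Tadi\'c's formula \eqref{eq_Tadic}, matched term by term with the Jacquet module of the candidate Langlands quotient.
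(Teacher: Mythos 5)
Your strategy is broadly in the spirit of the paper's -- strip off essentially square-integrable factors with Kudla's filtration, identify the participating subquotient, and match standard modules -- and the observation that the Steinberg exclusion in Corollary \ref{cor:shaving} never bites (since $\theta_l(\pi)\neq0$ forces $l\le 1$, while $s_r>0$ would force $l\ge 3$) is correct and is used in the paper's proof as well. The organization is different, however: you induct on $r$, peeling one factor at a time, whereas the paper peels all GL factors off at once down to $\pi_0$, and then further down to a discrete series $\pi_{00}$ in the tempered support of $\pi_0$, for which the structure of the full lift $\Theta_{-l}(\pi_{00})$ is known (Theorem \ref{thm:Muic_disc}). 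That difference is precisely where your argument has its first serious gap.

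The step ``by a cuspidal-support / Bernstein-component argument factors through $\chi_W\delta_r\nu^{s_r}\rtimes\theta_l(\pi')\twoheadrightarrow\theta_l(\pi)$'' is not justified. The full lift $\Theta_l(\pi')$ is not irreducible in general; deciding which irreducible subquotient participates in the surjection onto $\theta_l(\pi)$ is the technical core of the whole problem, and cuspidal support alone does not resolve it (the candidate constituents of the full lift listed in Remark \ref{rem:possible_subq} need not be distinguishable by cuspidal support). The paper spends Proposition \ref{prop:main}, Lemmas \ref{lemma:unique_quotient}, \ref{lemma:really_subq}, \ref{lemma:really_small_theta}, \ref{lem_descend}, and the long Jacquet-module computation in Lemma \ref{lem_resolve} on exactly this identification, and crucially it works with $\Theta_{-l}(\pi_{00})$ for $\pi_{00}$ in discrete series, where a structure theorem is available. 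Your induction on $r$ instead produces surjections from $\chi_W\delta_r\nu^{s_r}\rtimes\Theta_l(\pi')$ with $\pi'$ non-tempered for $r\ge 2$; there is no analogue of Theorem \ref{thm:Muic_disc} for non-tempered $\pi'$, so the needed control of $\Theta_l(\pi')$ is simply not available.

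The second gap is fatal as stated: $\theta_l(\pi)$ is \emph{not} $\chi$-generic once $l$ passes below the first occurrence. The abstract and Section \ref{sec:unitary} are explicitly about obtaining \emph{non-generic} unitarizable representations as theta lifts of generic ones. Concretely, for $l=-l'$ with $l'\ge 5$, Proposition \ref{prop:lifts_temp} shows the standard module of $\theta_{-l'}(\pi_0)$ contains $\chi_W|\cdot|^e\times\dotsm\times\chi_W|\cdot|^1$ with $e=(l'-1)/2\ge 2$; by Proposition \ref{prop:std_mod_reduc}, the adjacent pair $|\cdot|^{i+1}\times|\cdot|^i$ makes this standard module reducible, so $\theta_{-l'}(\pi_0)$ -- and likewise $\theta_{-l'}(\pi)$ -- is a proper Langlands quotient of a reducible standard module and therefore not generic by Theorem \ref{thm:gen_properties}(iii). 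Your Langlands identification collapses at this point. The fall-back you sketch (explicit Jacquet-module computations with $\mu^*$) is not a shortcut but is, in substance, what Sections \ref{sec:interlude} and \ref{sec:higher_lifts} of the paper actually carry out; until one does that work, the proof is incomplete.
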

To sketch our general approach, we now prove this theorem in case when $\theta_{l}(\pi_0)$ is tempered. The rest of the proof is more involved and will be given in several steps in Section \ref{sec:higher_lifts}.
\begin{proof}[Proof (when $\theta_l(\pi_0)$ is temepered)]
Theorem \ref{thm:first_occ} shows that we need only consider $\theta_{l}(\pi)$ for $l \leqslant 1$. With this in mind, Theorems 4.3 and 4.5 of \cite{atobe2017local} imply that the only cases in which $\theta_{l}(\pi_0)$ is tempered are the following: $l=1$ (in the going-down tower), $l = -1$, and $l=-3$ in the going-up tower when the multiplicity of $\chi_V$ in $\phi$ is odd (recall that $\phi$ denotes the parameter of $\pi_0$). We treat each of them separately.

\medskip

\noindent \underline{Case $1$: $l(\pi) = l(\pi_0) = -1$}\\
\noindent We recall the proof of Proposition C.4 in \cite{Gan2014}. In this case the first lift on both towers appears on the level $l = -1$. Since the left-hand side of
\[
\chi_V\delta_r\nu^{s_r} \times \dotsm \times \chi_V\delta_1\nu^{s_1} \rtimes \pi_0 \twoheadrightarrow \pi
\]
has a unique irreducible quotient, we can repeatedly apply Corollary \ref{cor:theta_epi} to arrive at
\[
\label{eq:first_lifts_epi1}\tag{1}
\chi_W\delta_r\nu^{s_r} \times \dotsm \times \chi_W\delta_1\nu^{s_1} \rtimes \Theta_{-1}(\pi_0) \twoheadrightarrow \Theta_{-1}(\pi).
\]
The use of Corollary \ref{cor:theta_epi} is justified: since $l < 0$ and $s_i > 0$, none of $\delta_i\nu^{s_i}$ are defined by a segment ending in $|\cdot|^\frac{l-1}{2}$.

Notice that $\Theta_{-1}(\pi_0)$ is irreducible and tempered: since $\pi_0$ is tempered, by Lemma \ref{lemma:temp_supp} there are discrete series representations $\delta_1', \dotsc, \delta_k',\pi_{00}$ such that
\[
\chi_V\delta_1' \times \dotsm \times \chi_V\delta_k'\rtimes \pi_{00} \twoheadrightarrow \pi_0
\]
(moreover, the left-hand side is completely reducible, by Lemma \ref{lemma:goldberg}). In this situation we can also use Corollary \ref{cor:theta_epi}: the segment defining $\delta_i'$ cannot end in $|\cdot|^\frac{l-1}{2}$ because $l < 0$. We get
\[
\chi_W\delta_1' \times \dotsm \times \chi_W\delta_k'\rtimes \Theta_{-1}(\pi_{00}) \twoheadrightarrow \Theta_{-1}(\pi_0).
\]
We can now use Theorem \ref{thm:Muic_disc}: $\Theta_{-1}(\pi_{00})$ is irreducible and tempered. This shows, by Lemma \ref{lemma:goldberg}, that the left-hand side in the above map is completely reducible, and that all of its irreducible subquotients are tempered. Thus, the same must hold for $\Theta_{-1}(\pi_0)$. Since $\Theta_{-1}(\pi_0)$ has a unique irreducible quotient, complete reducibility implies that $\Theta_{-1}(\pi_0)$ is itself irreducible (and tempered).

This shows that the left-hand side of \eqref{eq:first_lifts_epi1} is a standard module. Furthermore, since $\Theta_{-1}(\pi)\twoheadrightarrow \theta_{-1}(\pi)$, we can write
\[
\chi_W\delta_r\nu^{s_r} \times \dotsm \times \chi_W\delta_1\nu^{s_1} \rtimes \Theta_{-1}(\pi_0) \twoheadrightarrow \theta_{-1}(\pi)
\]
instead of \eqref{eq:first_lifts_epi1} and in this way arrive at the standard module for $\theta_{-1}(\pi)$.

\medskip

\noindent \underline{Case $2$: $l(\pi) = l(\pi_0) = 1$, $m_{\phi}(\chi_V)$ is odd, going-up tower}\\
This case is treated just like the previous one. Using Corollary \ref{cor:theta_epi} we get
\[
\chi_W\delta_r\nu^{s_r} \times \dotsm \times \chi_W\delta_1\nu^{s_1} \rtimes \Theta_{-3}(\pi_0) \twoheadrightarrow \Theta_{-3}(\pi)
\]
and it only remains to show that $\Theta_{-3}(\pi_0)$ is irreducible and tempered. The key point here is that, since $m_{\phi}(\chi_V)$ is odd, the parameter of $\pi_{00}$ (the representation appearing in the tempered support of $\pi_0$) also contains $\chi_V$. This implies $l(\pi_{00}) = 1$, which means that $\Theta_{-3}(\pi_{00})$ is the first lift of $\pi_{00}$ to the going-up tower. By Theorem \ref{thm:Muic_disc}, this means that $\Theta_{-3}(\pi_{00})$ is irreducible and tempered, so we can deduce the same properties for $\Theta_{-3}(\pi_{0})$ just as in case 1.

\medskip

\noindent \underline{Case $3$: $l(\pi) = l(\pi_0) = 1$, going-down tower}\\
As in the previous cases, Corollary \ref{cor:theta_epi} yields
\[
\chi_W\delta_r\nu^{s_r} \times \dotsm \times \chi_W\delta_1\nu^{s_1} \rtimes \Theta_{1}(\pi_0) \twoheadrightarrow \Theta_{1}(\pi)
\]
Note that with $l=1$ we still have $\delta_i\nu^{s_i} \neq \textnormal{St}_k\nu^\frac{l-k}{2}$ since $s_i > 0$ implies that the segment defining $\delta_i\nu^{s_i}$ cannot end in the trivial character $\mathbb{1} = |\cdot|^\frac{l-1}{2}$.

The difference is that in this case we do not know if $\Theta_{1}(\pi_0)$ is irreducible. However, we do know that all of its subquotients are tempered (and that they belong to the same L-packet, by Proposition \ref{prop:Lpackets}).
Therefore, there is a tempered irreducible subquotient $\sigma$ of 
$\Theta_{1}(\pi_0)$ such that
\[
\chi_W\delta_r\nu^{s_r} \times \dotsm \times \chi_W\delta_1\nu^{s_1} \rtimes \sigma \twoheadrightarrow \theta_{1}(\pi),
\]
and we need to show that $\sigma \cong \theta_{1}(\pi_0)$.

To this end, we use Corollary \ref{cor:theta_epi} again, this time in the opposite direction, and with $l=-1$. Having in mind that $\theta_{-1}(\theta_{1}(\pi)) = \pi$, we get
\[
\chi_V\delta_r\nu^{s_r} \times \dotsm \times \chi_V\delta_1\nu^{s_1} \rtimes \Theta_{-1}(\sigma) \twoheadrightarrow \pi
\]
Just like in case 1, $\Theta_{-1}(\sigma)$ is irreducible and tempered, so the left-hand side of the above epimorphism is in fact the standard module of $\pi$. The uniqueness of the standard module now implies $\Theta_{-1}(\sigma) = \theta_{-1}(\sigma) = \pi_0$, that is, $\sigma \cong \theta_{1}(\pi_0)$.
\end{proof}
The use of Corollary \ref{cor:theta_epi} in both directions (just like in case 3) is the starting point of our approach to determining the higher theta lifts.

\section{Interlude: irreducibility in \texorpdfstring{$\GL_n(\F)$}{GLn(F)}}
\label{sec:interlude}
Before advancing to the main part of the proof of Theorem \ref{thm:lifts}, we review some auxiliary results concerning certain induced representations of $\GL_n(\F)$ which appear in our calculations. The reader is advised to skim through this section at first reading, since only the statements (and not their proofs) are crucial for the next section.

We recall the work of Zelevinsky \cite{zelevinsky1980induced}. Any (unitary) cuspidal representation $\rho$ and $a,b \in \mathbb{R}$ such that $b-a \in \mathbb{Z}_{\geq 0}$ define a segment $[\nu^a\rho, \nu^b\rho]$ of irreducible cuspidal representations. To this segment, we attach the induced representation
\[
\nu^b\rho \times \nu^{b-1}\rho \times \dotsb \times \nu^a\rho.
\]
A representation of this form has a unique (Langlands) irreducible quotient, which we denote by $\langle \nu^a\rho, \nu^b\rho\rangle$. It also possesses a unique irreducible subrepresentation, denoted by $\langle \nu^b\rho, \nu^a\rho\rangle$. A representation of the form $\langle \nu^b\rho, \nu^a\rho\rangle$ (with $b\geq a$) is essentially square integrable; conversely, any essentially square integrable representation of the general linear group can be obtained in this way from a (uniquely determined) segment. At various points of this section, we freely use the terminology and results of \cite{zelevinsky1980induced} on linked segments.

We start by examining the relation between $\langle \nu^a\rho, \nu^b\rho\rangle$ with $a\leq b$ and $\langle \nu^d\rho, \nu^c\rho\rangle$ with $c\leq d$. We say that the segments $[\nu^a\rho,\nu^b\rho]$ and $[\nu^{c}\rho',\nu^{d}\rho']$ are adjacent if $\rho = \rho'$, and $a = d+1$ or $c = b+1$.
\begin{Lem}
\label{lemma:irr_zel}
Assume that the segments $[\nu^a\rho,\nu^b\rho]$ and $[\nu^{c}\rho',\nu^{d}\rho']$ are not adjacent. Then
\[
\langle \nu^a\rho, \nu^b\rho\rangle \times \langle \nu^d\rho', \nu^c\rho'\rangle \quad \text{and} \quad \langle \nu^d\rho', \nu^c\rho'\rangle\times \langle \nu^a\rho, \nu^b\rho\rangle
\]
are irreducible and isomorphic.
\end{Lem}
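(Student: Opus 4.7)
My plan is to proceed in two steps. The first step disposes of the case of different cuspidals. If $\rho\not\cong\rho'$, then the two factors belong to distinct Bernstein components of the appropriate general linear group, so any parabolic induction from their external tensor product is automatically irreducible, and the commutativity of the two orderings is standard. This case is immediate.

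Assume therefore $\rho\cong\rho'$, so both segments live on the same cuspidal line. I would use Zelevinsky's classification. In the Langlands (subrepresentation) parametrization, the representation $\langle\nu^a\rho,\nu^b\rho\rangle$ is the Langlands quotient associated with $[\nu^a\rho,\nu^b\rho]$ and corresponds to the multisegment of singleton segments $\{[\nu^k\rho]:a\le k\le b\}$, while $\langle\nu^d\rho,\nu^c\rho\rangle$ is the essentially square integrable representation attached to $[\nu^c\rho,\nu^d\rho]$ and corresponds to the single-segment multisegment $\{[\nu^c\rho,\nu^d\rho]\}$. Zelevinsky's irreducibility criterion \cite{zelevinsky1980induced} reduces the irreducibility of the product to the combinatorial statement that no segment from the first multisegment is linked (in Zelevinsky's sense) to any segment of the second. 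Since a singleton $[\nu^k\rho]$ is linked to the segment $[\nu^c\rho,\nu^d\rho]$ precisely when $k=c-1$ or $k=d+1$, the question reduces to verifying that the non-adjacency hypothesis forbids these two values of $k$ from landing in the interval $[a,b]$; this I would settle by a short case analysis on the relative positions of the four endpoints $a,b,c,d$ (corresponding to the four geometric configurations: nested, overlapping, disjoint with a gap, or merely touching).

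Once irreducibility has been established for one ordering, the isomorphism $\pi_1\times\pi_2\cong\pi_2\times\pi_1$ is automatic: both sides have the same cuspidal support and are irreducible, so the standard intertwining operator between them, which is known to be an isomorphism precisely in the absence of linked segment pairs, realizes the desired isomorphism explicitly.

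The step I expect to require the most care is the combinatorial matching: translating the geometric ``not adjacent'' hypothesis into the algebraic ``no linked singleton inside $[a,b]$'' condition and keeping track of how the Langlands data of the Langlands quotient (a multisegment of singletons) interacts with the single-segment Langlands data of the essentially square integrable factor. Once this matching is written out cleanly, the rest is a direct appeal to Zelevinsky's theorem and the standard yoga of intertwining operators.
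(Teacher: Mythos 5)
Your reduction ``irreducibility $\Leftrightarrow$ no cross-linkage between the two parametrizing multisegments'' is not a valid step: unlinkedness of every pair of segments drawn from the two multisegments is a \emph{sufficient} condition for irreducibility of the product (Zelevinsky, Prop.~8.4 of \cite{zelevinsky1980induced}), not a necessary one, and---this is the fatal gap---non-adjacency of $[a,b]$ and $[c,d]$ does \emph{not} imply that the two multisegments you assign are cross-unlinked. The nested and overlapping configurations in your intended case analysis do produce cross-linkage. For example, take $\rho=\rho'=\mathbb{1}$, $[a,b]=[0,2]$, $[c,d]=[1,1]$. The segments are not adjacent ($a\neq d+1$ and $c\neq b+1$), so the lemma asserts that $\langle\nu^0,\nu^2\rangle\times\nu^1$ is irreducible, and indeed it is (the two single segments are nested, hence unlinked). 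Yet under your parametrization $\langle\nu^0,\nu^2\rangle$ corresponds to $\{[0],[1],[2]\}$ and $\nu^1=\langle\nu^1,\nu^1\rangle$ to $\{[1]\}$, and both $[0]=[\nu^{c-1}]$ and $[2]=[\nu^{d+1}]$ are linked to $[1]=[c,d]$. Cross-linkage occurs, so your criterion gives you nothing, and your planned verification ``non-adjacency forbids $c-1$ and $d+1$ from landing in $[a,b]$'' is simply false; the same happens whenever the segments properly overlap. An if-and-only-if linkage criterion for a product of a Langlands quotient with an essentially square-integrable representation is exactly the assertion of the lemma, so it cannot be quoted as an external input without circularity.

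For the record, the paper does not give a combinatorial proof at all: it invokes Lemma~I.6.3 of \cite{Moeglin1989}, whose argument proceeds by an explicit Jacquet-module (derivative) analysis rather than by linkage bookkeeping. A self-contained argument in the spirit of Section~\ref{sec:interlude} would compute $m^*$ of each factor via \eqref{eq_JMdelta} and \eqref{eq_JMzeta}, multiply using \eqref{eq_Hopf}, and show that under the non-adjacency hypothesis the product has a unique irreducible subrepresentation and a unique irreducible quotient that are forced to coincide; the multisegment comparison alone cannot replace this.
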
\begin{proof}
This is a special case of Lemma I.6.3 of \cite{Moeglin1989}.
\end{proof}
We now examine the case when the segments are adjacent. 
In what follows, we assume that $\rho$ is always equal to the trivial character $\mathbb{1}$ of $\GL_1(\F)$, although the same proofs work for any cuspidal $\rho$. 
\begin{Lem}
\label{rem:swap2}
Let $[a,b]$ and $[b+1,d]$ be adjacent segments. Then the representation
\[
\langle |\cdot|^{b+1}, |\cdot|^d\rangle \times \langle |\cdot|^b, |\cdot|^a\rangle
\]
is of length two. Furthermore,
\begin{itemize}
\item its unique irreducible quotient is the Langlands quotient of
\[
|\cdot|^d \times |\cdot|^{d-1} \times \dotsm \times  |\cdot|^{b+1} \times \langle |\cdot|^b, |\cdot|^a\rangle;
\]
it is also the unique irreducible quotient of $\langle |\cdot|^{b-1},|\cdot|^a\rangle\times\langle |\cdot|^b,|\cdot|^d\rangle$.
\item Its unique irreducible subrepresentation is the Langlands quotient of
\[
|\cdot|^d \times |\cdot|^{d-1} \times \dotsm \times  |\cdot|^{b+2} \times \langle |\cdot|^{b+1}, |\cdot|^a\rangle
\]
at the same time, it is the unique irreducible quotient of $\langle |\cdot|^b, |\cdot|^a\rangle\times\langle |\cdot|^{b+1}, |\cdot|^d\rangle$.
\end{itemize}
\end{Lem}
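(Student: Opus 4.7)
The argument naturally divides into establishing length $2$ and then identifying the composition factors together with the four alternative descriptions. For the length-$2$ claim I would compute $m^*(L_1 \times \delta_2)$ via the multiplicativity formula (\ref{eq_Hopf}) and the explicit Jacquet-module expressions (\ref{eq_JMdelta}), (\ref{eq_JMzeta}), with $L_1 = \langle |\cdot|^{b+1}, |\cdot|^d\rangle$ and $\delta_2 = \langle |\cdot|^b, |\cdot|^a\rangle$; inspecting the resulting tensor sum forces the subquotients to be exactly the two irreducibles $Q := L(|\cdot|^d, \dotsc, |\cdot|^{b+1}; \delta([a,b]))$ and $S := L(|\cdot|^d, \dotsc, |\cdot|^{b+2}; \delta([a, b+1]))$. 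Equivalently one may appeal to the classical linked-segment result of \cite{zelevinsky1980induced} applied to the adjacent pair $[a,b]$, $[b+1, d]$.

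To identify $Q$ as the unique irreducible quotient, I would use the standard-module epimorphism
\[
|\cdot|^d \times \dotsb \times |\cdot|^{b+1} \times \delta([a, b]) \twoheadrightarrow L_1 \times \delta_2
\]
produced from $|\cdot|^d \times \dotsb \times |\cdot|^{b+1} \twoheadrightarrow L_1$. Since the exponents $d > d-1 > \dotsb > b+1 > (a+b)/2$ are strictly decreasing, the left-hand side is a Langlands standard module and its Langlands quotient is $Q$; this forces $Q$ to be the unique irreducible quotient of $L_1 \times \delta_2$. The alternative description as the unique quotient of $\delta([a, b-1]) \times L([b, d])$ is obtained by the following rearrangement: the adjacent linked pair $[a, b-1], [b, b]$ gives the epimorphism $\delta([a, b-1]) \times |\cdot|^b \twoheadrightarrow \delta([a, b])$; inducing this and using Lemma \ref{lemma:irr_zel} to commute $\delta([a, b-1])$ past each of $|\cdot|^{b+1}, \dotsc, |\cdot|^d$ (the pairs $[a, b-1], [k,k]$ not being adjacent for $k \geq b+1$), and finally collapsing $|\cdot|^d \times \dotsb \times |\cdot|^b \twoheadrightarrow L([b, d])$, yields a surjection $\delta([a, b-1]) \times L([b, d]) \twoheadrightarrow Q$. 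The length-$2$ analogue for this second induced representation (from the same linked-segment phenomenon) then forces $Q$ to be its unique irreducible quotient.

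The identification of the unique irreducible subrepresentation $S$ proceeds by the dual procedure: the Langlands sub embedding $L_1 \hookrightarrow |\cdot|^{b+1} \times \dotsb \times |\cdot|^d$ together with the sub embedding $\delta([a, b+1]) \hookrightarrow |\cdot|^{b+1} \times \delta([a, b])$ from the same linked pair (now in sub form), combined with Lemma \ref{lemma:irr_zel} commutations past $|\cdot|^{b+2}, \dotsc, |\cdot|^d$, exhibits $S$ as an irreducible subrepresentation of $L_1 \times \delta_2$ equal to the Langlands quotient of $|\cdot|^d \times \dotsb \times |\cdot|^{b+2} \times \delta([a, b+1])$. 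The remaining description of $S$ as the unique irreducible quotient of $\delta([a, b]) \times L([b+1, d])$ mirrors the argument for $Q$ with the roles of the two segments swapped. The main obstacle I foresee is the bookkeeping: one must consistently track which maps are inclusions and which are surjections through every commutation and every small length-$2$ reduction, and the length-$2$ statement at the top of the argument is essential for converting ``is a quotient/sub of'' into ``is \emph{the} unique irreducible quotient/sub of,'' as it forces $Q$ and $S$ to exhaust the composition factors of the relevant induced representation.
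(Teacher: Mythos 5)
Your treatment of the length-two claim coincides with the paper's own argument: compute the corank-one Jacquet module $R_{P_1}(\pi)$ via the multiplicativity formula \eqref{eq_Hopf} and the explicit expressions \eqref{eq_JMdelta}, \eqref{eq_JMzeta}, observe it has exactly two irreducible constituents, and conclude (using reducibility, which is the linked-segment criterion) that $\pi$ has length two. Your identification of $Q$ as the Langlands quotient via the standard-module surjection $|\cdot|^d \times \dotsm \times |\cdot|^{b+1} \times \langle|\cdot|^b,|\cdot|^a\rangle \twoheadrightarrow L_1 \times \delta_2$ is the natural way to fill in the part the paper leaves to the reader, and the commutation bookkeeping using Lemma \ref{lemma:irr_zel} is correct.

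Two steps deserve more care. First, your argument for the subrepresentation $S$ is not tight as stated: the composite of $L_1 \hookrightarrow |\cdot|^{b+1} \times \dotsm \times |\cdot|^d$ with the embedding of $\delta([a,b+1])$ only places $S$ inside the larger representation $|\cdot|^{b+1} \times \dotsm \times |\cdot|^d \times \delta_2$, into which $L_1 \times \delta_2$ also maps, and one would still have to know that this larger representation has simple socle to conclude $S \hookrightarrow L_1 \times \delta_2$. (The $|\cdot|^{b+1}$ at the left cannot be slid past $|\cdot|^{b+2}, \dotsc, |\cdot|^d$, as those singleton segments are adjacent to it, so a naive commutation does not help.) The cleaner route, which avoids the issue entirely, is elimination: once $L_1 \times \delta_2$ is known to have length two with composition factors $Q \neq S$ and unique irreducible quotient $Q$, its socle cannot be $Q$ (Schur's lemma would force either a splitting $Q \oplus S$, contradicting uniqueness of the quotient, or an inclusion $Q \subset S$, which is absurd), so the unique irreducible subrepresentation is $S$. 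Second, for the alternative descriptions such as ``$Q$ is the unique irreducible quotient of $\langle|\cdot|^{b-1},|\cdot|^a\rangle \times \langle|\cdot|^b,|\cdot|^d\rangle$'', your construction does produce a surjection onto $Q$, but asserting that ``the length-two analogue then forces $Q$ to be its unique irreducible quotient'' is too quick: length two plus having $Q$ as one quotient leaves open the possibility that the representation is semisimple, in which case the quotient would not be unique. One must separately rule out semisimplicity (e.g.\ by again exhibiting a simple socle via a standard-module embedding, or by a Jacquet-module argument), after which the same elimination argument applies. These are precisely the kinds of details covered by the paper's ``we leave the proof of the rest of the lemma to the reader,'' so the proposal is in the right spirit, but it would not compile into a complete proof as written.
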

\begin{proof}
In this case, the representation $\pi =\langle |\cdot|^{b+1}, |\cdot|^d\rangle \times \langle |\cdot|^b, |\cdot|^a\rangle$ reduces. Moreover, formula \eqref{eq_Hopf} shows that $R_{P_1}(\pi)$ is of length two, so $\pi$ is also of length two. We leave the proof of the rest of the lemma to the reader.
\end{proof}
We also make use of the following lemma. If $[b,d]$ and $[d+1,e]$ are adjacent segments, the above lemma shows that $\langle |\cdot|^{d+1}, |\cdot|^e\rangle \times \langle |\cdot|^{d}, |\cdot|^b\rangle$ has a unique irreducible quotient, which we now denote by $L$.
\begin{Lem}
\label{lemma_specific_reducibility}
\begin{enumerate}[(i)]
\item For any $a \leq b$, the representation $L \times \langle |\cdot|^{d}, |\cdot|^a\rangle$ is irreducible. In particular, $L \times \langle |\cdot|^{d}, |\cdot|^a\rangle = \langle |\cdot|^{d}, |\cdot|^a\rangle \times L$.
\item For any $c$ such that $d-1 \geq c \geq b$, the representation $L \times \langle |\cdot|^{d-1}, |\cdot|^c\rangle$ is irreducible. In particular, $L \times \langle |\cdot|^{d-1}, |\cdot|^c\rangle =\langle |\cdot|^{d-1}, |\cdot|^c\rangle \times L$.
\end{enumerate}
\end{Lem}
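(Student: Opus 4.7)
My plan is to prove both (i) and (ii) by exploiting the two descriptions of $L$ from Lemma~\ref{rem:swap2}, each paired with one part of the statement via a commutativity argument for induced representations.

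For part (i), observe that for $a\leq b$ the segments $[b,d]$ and $[a,d]$ are not linked (the former is contained in the latter), so by Zelevinsky's criterion $\langle|\cdot|^d,|\cdot|^b\rangle$ and $\langle|\cdot|^d,|\cdot|^a\rangle$ commute irreducibly. Starting from the surjection $\langle|\cdot|^{d+1},|\cdot|^e\rangle\times\langle|\cdot|^d,|\cdot|^b\rangle\twoheadrightarrow L$ and inducing with $\langle|\cdot|^d,|\cdot|^a\rangle$, one may swap the last two factors and then invoke Lemma~\ref{rem:swap2} to identify the unique irreducible quotient of $\langle|\cdot|^{d+1},|\cdot|^e\rangle\times\langle|\cdot|^d,|\cdot|^a\rangle$ as $L^\ast$, the analog of $L$ with $a$ replacing $b$. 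This yields two parallel surjections from the common module $\langle|\cdot|^{d+1},|\cdot|^e\rangle\times\langle|\cdot|^d,|\cdot|^b\rangle\times\langle|\cdot|^d,|\cdot|^a\rangle$ onto $L\times\langle|\cdot|^d,|\cdot|^a\rangle$ and $L^\ast\times\langle|\cdot|^d,|\cdot|^b\rangle$ respectively; both map further onto the Langlands quotient $L''$ of the combined multisegment $\{[e],\dotsc,[d+1],[b,d],[a,d]\}$.

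For part (ii), the analogous containment $[c,d-1]\subset[b,d]$ for $b\leq c\leq d-1$ gives the commutativity of $\langle|\cdot|^{d-1},|\cdot|^c\rangle$ and $\langle|\cdot|^d,|\cdot|^b\rangle$ (Zelevinsky), and Lemma~\ref{lemma:irr_zel} applies to the non-adjacent segments $[d+1,e]$ and $[c,d-1]$ to give that $\langle|\cdot|^{d+1},|\cdot|^e\rangle$ and $\langle|\cdot|^{d-1},|\cdot|^c\rangle$ also commute. Hence all three factors in $\langle|\cdot|^{d+1},|\cdot|^e\rangle\times\langle|\cdot|^d,|\cdot|^b\rangle\times\langle|\cdot|^{d-1},|\cdot|^c\rangle$ pairwise commute, so that $L\times\langle|\cdot|^{d-1},|\cdot|^c\rangle$ and $\langle|\cdot|^{d-1},|\cdot|^c\rangle\times L$ both arise as quotients of isomorphic modules and share the same Langlands quotient.

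\textbf{Main obstacle.}
The hard step is upgrading the statement that $L\times\tau$ has a unique irreducible quotient equal to $L''$ to the assertion of irreducibility, i.e.\ that its length is one. I expect to settle this by a direct Jacquet module computation using the Hopf formula~\eqref{eq_Hopf} together with the explicit decompositions~\eqref{eq_JMdelta} and~\eqref{eq_JMzeta}, verifying that every irreducible subquotient of $m^*(L\times\tau)$ is already accounted for in $m^*(L'')$. The delicate bookkeeping consists in showing that any hypothetical extra constituent would contradict the containment hypotheses $a\leq b$ in (i) (resp.\ $b\leq c\leq d-1$ in (ii)); in particular, the combinatorics of Lemma~\ref{rem:swap2} should preclude the appearance of the alternative Langlands quotient that would otherwise arise if the hypotheses were relaxed.
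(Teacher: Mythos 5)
Your setup is sound and you have correctly identified the crux: the obstruction is upgrading "$L\times\tau$ has a unique irreducible quotient $\xi$" to "$L\times\tau$ is irreducible." But that obstruction is left unresolved; the proposal only announces a plan to settle it "by a direct Jacquet module computation" via the Hopf formula, without actually performing the computation or explaining why the "delicate bookkeeping" succeeds. As written, this is a sketch of a future argument, not a proof, and it is far from clear that the bookkeeping closes up cleanly: $L$ is not essentially square integrable, so $m^*(L)$ produces a genuinely complicated sum, and you would have to rule out every extraneous constituent in $m^*(L\times\tau)$ — a nontrivial project that you have not undertaken.

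The paper takes a completely different route precisely to avoid this: having shown that $\xi$ is the unique irreducible quotient of both $L\times\tau$ and $\tau\times L$ (using the commutativity you also observe and Lemma~\ref{rem:swap2}), it then exploits that $\xi$, being the Langlands quotient of the ambient standard module, occurs with multiplicity one in the common Jordan--H\"older series, and (in case (ii)) is by the subrepresentation version of the Langlands classification also the unique irreducible \emph{subrepresentation} of $\tau\times L$. Multiplicity one then pins down the whole module to be $\xi$. This replaces your prospective Jacquet-module bookkeeping with a soft multiplicity-one argument and requires no computation of $m^*$. If you wish to salvage the Jacquet-module approach you should at minimum indicate which formula among~\eqref{eq_JMdelta},~\eqref{eq_JMzeta} you would apply to the non-square-integrable factor $L$ (none of them applies directly) and how you would control the resulting sum; as it stands, the claimed main step is a gap.
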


\begin{proof}
We first prove (ii). Denote by $\xi$ the unique (Langlands) quotient of
\[
\langle |\cdot|^{d+1}, |\cdot|^e\rangle \times \langle |\cdot|^{d}, |\cdot|^b\rangle \times \langle |\cdot|^{d-1}, |\cdot|^c\rangle.
\]
Since $L \times \langle |\cdot|^{d-1}, |\cdot|^c\rangle$ is a quotient of the above, and $\xi$ is the unique irreducible quotient, we also have $L \times \langle |\cdot|^{d-1}, |\cdot|^c\rangle \twoheadrightarrow \xi$. We now have
\begin{align*}
\langle |\cdot|^{d+1}, |\cdot|^e\rangle \times \langle |\cdot|^{d}, |\cdot|^b\rangle \times \langle |\cdot|^{d-1}, |\cdot|^c\rangle &\cong \langle |\cdot|^{d+1}, |\cdot|^e\rangle \times \langle |\cdot|^{d-1}, |\cdot|^c\rangle \times \langle |\cdot|^{d}, |\cdot|^b\rangle\\
&\cong  \langle |\cdot|^{d-1}, |\cdot|^c\rangle \times \langle |\cdot|^{d+1}, |\cdot|^e\rangle \times \langle |\cdot|^{d}, |\cdot|^b\rangle,
\end{align*}
where the first isomorphism follows from the fact that $[b,d]$ contains $[c,d-1]$ (so they are not linked), and the second from Lemma \ref{lemma:irr_zel}.
Since $\xi$ is the unique quotient of the above representation, it must also be a quotient of $ \langle |\cdot|^{d-1}, |\cdot|^c\rangle \times L$. On the other hand, $\xi$ is (by definition) a subrepresentation of $ \langle |\cdot|^{d-1}, |\cdot|^c\rangle  \times L$. Since it appears with multiplicity one ($\xi$ being the Langlands quotient), it follows that $ \langle |\cdot|^{d-1}, |\cdot|^c\rangle \times L$ and $L \times \langle |\cdot|^{d-1}, |\cdot|^c\rangle$ are irreducible and isomorphic.

We now prove (i) in a similar manner. Let $\xi$ be the unique (Langlands) quotient of
\[
 \langle |\cdot|^{d+1}, |\cdot|^e \rangle \times  \langle |\cdot|^{d}, |\cdot|^b\rangle \times \langle |\cdot|^{d}, |\cdot|^a\rangle.
\]
As in (ii), we also have $L \times \langle |\cdot|^{d}, |\cdot|^a\rangle \twoheadrightarrow \xi$. By Lemma \ref{rem:swap2}, this implies
\[
 \langle |\cdot|^{d-1}, |\cdot|^b \rangle \times  \langle |\cdot|^{d}, |\cdot|^e \rangle \times  \langle |\cdot|^{d}, |\cdot|^a \rangle \twoheadrightarrow \xi.
\]
We now have
\begin{align*}
\langle |\cdot|^{d-1}, |\cdot|^b \rangle \times  \langle |\cdot|^{d}, |\cdot|^e \rangle \times  \langle |\cdot|^{d}, |\cdot|^a \rangle &\cong \langle |\cdot|^{d-1}, |\cdot|^b \rangle \times  \langle |\cdot|^{d}, |\cdot|^a \rangle \times  \langle |\cdot|^{d}, |\cdot|^e \rangle  \\
&\cong \langle |\cdot|^{d}, |\cdot|^a \rangle  \times  \langle |\cdot|^{d-1}, |\cdot|^b \rangle \times  \langle |\cdot|^{d}, |\cdot|^e \rangle,
\end{align*}
where the first isomorphism follows from Lemma \ref{lemma:irr_zel}, and the second from the fact that $[a,d]$ contains $[b,d-1]$ (so they are not linked).
Therefore, $\xi$ is an irreducible quotient of $\langle |\cdot|^{d}, |\cdot|^a \rangle  \times  \langle |\cdot|^{d-1}, |\cdot|^b \rangle \times  \langle |\cdot|^{d}, |\cdot|^e \rangle$, only this time, we do not know if $\xi$ is unique. Applying Lemma \ref{rem:swap2} to $\langle |\cdot|^{d-1}, |\cdot|^b \rangle \times  \langle |\cdot|^{d}, |\cdot|^e \rangle$ now shows that $\xi$ is necessarily a quotient of $ \langle |\cdot|^{d}, |\cdot|^a \rangle \times L$ or of $ \langle |\cdot|^{d}, |\cdot|^a \rangle \times \langle |\cdot|^{d}, |\cdot|^e \rangle \times \langle |\cdot|^{d-1}, |\cdot|^b \rangle$. If the latter were true, we would have
\[
\langle |\cdot|^{d}, |\cdot|^e \rangle \times \langle |\cdot|^{d}, |\cdot|^a \rangle \times \langle |\cdot|^{d-1}, |\cdot|^b \rangle \cong \langle |\cdot|^{d}, |\cdot|^a \rangle \times \langle |\cdot|^{d}, |\cdot|^e \rangle \times \langle |\cdot|^{d-1}, |\cdot|^b \rangle \twoheadrightarrow \xi,
\]
contradicting the shape of the standard module for $\xi$. Thus, $\xi$ is a quotient of $\langle |\cdot|^{d}, |\cdot|^a \rangle  \times L$. As we already know $L \times \langle |\cdot|^{d}, |\cdot|^a \rangle  \twoheadrightarrow \xi$, and $\xi$ appears with multiplicity one (again, using the multiplicity one property of the Langlands quotient), the conclusion follows.
\end{proof}
\section{Higher lifts}
\label{sec:higher_lifts}
We are now ready to prove the rest of Theorem \ref{thm:lifts}. Recall that we have already settled the cases in which $l \geq -1$ on the going down tower, as well as the first lifts on the going-up tower when $\theta_l(\pi_0)$ is tempered.
In all the remaining cases $l = n+\epsilon-m$ is negative (and odd), so we adjust the notation: letting $l > 0$ be an arbitrary odd integer, we want to determine $\theta_{-l}(\pi)$.

\subsection{Subquotients of \texorpdfstring{$\Theta(\pi_0)$}{Theta(pi0)}}
\label{subs:subq}
We fix $l>0$ odd and set $\sigma = \theta_{-l}(\pi)$; our goal is to determine $\sigma$. Since $\pi \in \Irr(G(W_n))$ is $\chi$-generic, it is isomorphic to its standard module:
\[
\pi \cong \chi_V\delta_r\nu^{s_r} \times \dotsm \times \chi_V\delta_1\nu^{s_1} \rtimes \pi_0.
\]
Applying Corollary \ref{cor:theta_epi} just like in Section \ref{sec:lifts}, we get
\[
\label{eq:epi0}\tag{0}
\chi_W\delta_r\nu^{s_r} \times \dotsm \times \chi_W\delta_1\nu^{s_1} \rtimes \Theta_{-l}(\pi_0)\twoheadrightarrow \Theta_{-l}(\pi)\twoheadrightarrow \theta_{-l}(\pi) = \sigma.
\]
Our main task is to determine the irreducible subquotient of $\Theta_{-l}(\pi_0)$ which participates in the above epimorphism. To describe it, we need to further analyze $\pi_0$. By Lemma \ref{lemma:temp_supp} we can write
\[
\chi_V\delta_1' \times \dotsm \times \chi_V\delta_k'\rtimes \pi_{00} \twoheadrightarrow \pi_0
\]
where $\delta_1', \dotsc, \delta_k',\pi_{00}$ are irreducible discrete series representations. Setting $\Delta = \delta_1' \times \dotsm \times \delta_k'$ and applying Corollary \ref{cor:theta_epi} again, we get
\[
\chi_W\Delta \rtimes \Theta_{-l}(\pi_{00}) \twoheadrightarrow \Theta_{-l}(\pi_0).
\]
Thus, we can write
\[
\label{eq:epi1}\tag{1}
\chi_W\delta_r\nu^{s_r} \times \dotsm \times \chi_W\delta_1\nu^{s_1} \times \chi_W\Delta \rtimes \Theta_{-l}(\pi_{00}) \twoheadrightarrow \sigma.
\]
We would now like to identify the subquotient (call it $\sigma_0$) of $\Theta_{-l}(\pi_{00})$ which participates in the above epimorphism. 
We will show the following:
\begin{Prop}
\label{prop:main}
The subquotient of $\Theta_{-l}(\pi_{00})$ which participates in $\eqref{eq:epi1}$ is equal to $\theta_{-l}(\pi_{00})$.
\end{Prop}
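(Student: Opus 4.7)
My plan is to identify the subquotient $\sigma_0$ of $\Theta_{-l}(\pi_{00})$ appearing in \eqref{eq:epi1} by reversing the theta lift back to the symplectic tower and comparing the outcome with $\pi$ itself. First I classify the candidates: since $\pi_{00}$ is a discrete series representation and $m = n+\epsilon+l$ lies strictly above $m_{\text{temp}}(\pi_{00})$, Theorem \ref{thm:Muic_disc} says every irreducible subquotient of $\Theta_{-l}(\pi_{00})$ is either $\theta_{-l}(\pi_{00})$ itself (the unique Langlands quotient) or a ``hybrid'' Langlands quotient of the form
\[
L\bigl(\chi_W|\cdot|^{\frac{l-1}{2}},\dotsc,\chi_W|\cdot|^{\frac{l_1+1}{2}};\sigma(m_1)\bigr),
\]
where $m_1 = n+\epsilon+l_1$ lies strictly between $m_{\text{temp}}(\pi_{00})$ and $m$ and $\sigma(m_1)$ is a tempered subquotient of $\Theta(\pi_{00},V_{m_1})$. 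The proposition amounts to ruling out every hybrid candidate.

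The main strategy is to apply Corollary \ref{cor:shaving} in the symmetric $H\to G$ direction (justified by the symmetry of Kudla's filtration) iteratively to \eqref{eq:epi1}, stripping off the factors of $A = \chi_W\delta_r\nu^{s_r}\times\dotsm\times\chi_W\delta_1\nu^{s_1}\times\chi_W\Delta$ one at a time. Each stripping step triggers either the clean move (i) or the exceptional level-shift (ii) of Corollary \ref{cor:shaving}; by tracking the levels carefully I should obtain, once all $\GL$-factors are removed, an assertion that exhibits $\pi$ (or an irreducible quotient thereof) as a constituent of the theta lift, on the symplectic tower, of the ``tempered core'' sitting inside $\sigma_0$. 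Since the standard module of $\pi$ is rigid---the standard module conjecture combined with Theorem \ref{thm:first_occ} forces its shape---this pins the tempered core down uniquely, and the combinatorial count of exponents stripped along the way forces $\sigma_0$ into the unique-Langlands-quotient shape $\theta_{-l}(\pi_{00})$ rather than any hybrid form.

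The hard part will be the combinatorics of exponents. The segments defining the factors $\chi_W\delta_i\nu^{s_i}$ can be adjacent to or overlap with the segment $\bigl[|\cdot|^{(l_1+1)/2},|\cdot|^{(l-1)/2}\bigr]$ of extra exponents that a hypothetical hybrid $\sigma_0$ would contribute; when this happens, the induced representation $A\rtimes\sigma_0$ acquires additional Langlands quotients through the reducibilities analyzed in Section \ref{sec:interlude}, most notably the swap-and-split patterns of Lemmas \ref{lemma:irr_zel}, \ref{rem:swap2}, and \ref{lemma_specific_reducibility}. Using those lemmas to rearrange and untangle factors is the technical core of the argument; without them one cannot distinguish a hypothetical hybrid from $\theta_{-l}(\pi_{00})$ merely by inspecting standard modules. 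A secondary subtlety is the exceptional shift (ii) of Corollary \ref{cor:shaving}, which must be tracked so that the level at which $\pi$ is finally recovered matches the first-occurrence prediction of Theorem \ref{thm:first_occ} exactly, without spurious off-by-two discrepancies accumulating along the way.
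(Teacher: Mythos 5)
Your overall strategy --- apply Kudla's filtration in the reverse ($H\to G$) direction, strip $\GL$-factors while tracking the level shifts caused by option (ii) of Corollary \ref{cor:shaving}, and then use the rigidity of the standard module of $\pi$ --- is indeed the paper's approach, and you correctly identify that the swap/split lemmas from Section \ref{sec:interlude} are needed to untangle adjacent or overlapping segments. However, the crucial closing step is under-specified, and the gap is real.

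After stripping all $\GL$-factors and rearranging, the cuspidal-support comparison only yields the relation $l' = l - 2k$, where $k$ counts the number of times option (ii) of Corollary \ref{cor:shaving} fired. This constraint is satisfied by \emph{every} odd $l'$ with $l_0 \le l' < l$, not just $l' = l_0$ --- the combinatorial count does not single out the true level. What actually remains is a surviving condition: the tempered representation $\pi_0$ must appear as a subquotient of $\chi_V\Delta'' \rtimes \Theta_{l'}(\tau)$. Ruling out $l' > l_0$ requires showing this condition fails whenever $l' > l_0$, and that is a genuinely separate, substantive argument (the paper's Lemma \ref{lem:main_lemma}). Its proof analyzes the tempered support of $\tau$, splits into cases according to the parity of the multiplicity $m_{\phi_\tau}(\chi_W S_{l'})$, and exploits the Howe duality together with the discreteness/temperedness of first lifts of discrete series to derive a contradiction (namely, that $\theta_{-l'}(\pi_{00})$ would have to be tempered or discrete, which is impossible for $l' > l_0$ by Theorem \ref{thm:Muic_disc}). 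Your phrase ``the combinatorial count of exponents stripped along the way forces $\sigma_0$ into the unique-Langlands-quotient shape'' glosses over precisely this step, and without it the proof does not close: the rigidity of the standard module of $\pi$ only tells you that $\pi_0$ is a subquotient of $\chi_V\Delta''\rtimes\Theta_{l-2k}(\tau)$; it does not, by itself, exclude the hybrid candidates.
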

\begin{Rem}
\label{rem:possible_subq}
We recall the results of \cite{muic2008structure} and \cite{atobe2017local} (see Theorem \ref{thm:Muic_disc}): $\theta_{-l}(\pi_{00})$ is the Langlands quotient of
\[
\label{eq:firstlift}\tag{\textasteriskcentered}
\chi_W|\cdot|^\frac{l-1}{2} \times \chi_W|\cdot|^\frac{l-3}{2} \times \dotsm \times \chi_W|\cdot|^\frac{1+l_0}{2} \rtimes \theta_{-l_0}(\pi_{00})
\]
where $l_0 = \min\{l > 0 : \theta_{-l}(\pi_{00})\neq 0\}$. Any other irreducible subquotient of $\Theta_{-l}(\pi_{00})$ is either:
\begin{itemize}
    \item tempered; or
    \item the Langlands quotient of
\[
\chi_W|\cdot|^\frac{l-1}{2} \times \chi_W|\cdot|^\frac{l-3}{2} \times \dotsm \times \chi_W|\cdot|^\frac{1+l'}{2} \rtimes \sigma_0',
\]
where $\sigma_0'$ is a tempered subquotient of $\Theta_{-l'}(\pi_{00})$ for some $l'$, with $l > l' \geqslant l_0$.
\end{itemize}
Note that the above Langlands quotient is also the unique quotient of $\chi_W\langle|\cdot|^{\frac{1+l'}{2}},|\cdot|^{\frac{l-1}{2}}\rangle \rtimes \sigma_0'$.
\end{Rem}
We are now ready to prove the proposition.
\begin{proof}[Proof of Proposition \ref{prop:main}]
Assume, with the above remark in mind, that the subquotient of $\Theta_{-l}(\pi_{00})$ we want to find (and which we denote by $\sigma_{0}$) is isomorphic to the unique irreducible quotient of
\[
\chi_W\langle|\cdot|^\frac{1+l'}{2},|\cdot|^\frac{l-1}{2}\rangle \rtimes \sigma_0'.
\]
Here we allow the segment $[\frac{1+l'}{2},\frac{l-1}{2}]$ to be empty, i.e.~that $\sigma_{0} = \sigma_{0}'$ is tempered. We want to prove that $l' = l_0$, so that $\sigma_0$ is given by the quotient of \eqref{eq:firstlift} in the above remark.

As $\sigma_0$ participates in \eqref{eq:epi1}, we have
\[
\chi_W\Pi \times \chi_W\Delta \times\chi_W\langle|\cdot|^\frac{1+l'}{2},|\cdot|^\frac{l-1}{2}\rangle \rtimes \sigma_0' \twoheadrightarrow \sigma,
\]
where we used $\Pi$ to denote $\delta_r\nu^{s_r} \times \dotsm \times \delta_1\nu^{s_1}$. We are now in a situation which matches the requirements of Lemma \ref{lemma:irr_zel}: $\langle|\cdot|^\frac{1+l'}{2},|\cdot|^\frac{l-1}{2}\rangle$ can switch places with (almost) all of $\delta'_i$ which define $\Delta$. If all of $\delta_i'$ could be replaced, we would have
\[
\label{eq:caseI}
\tag{I}
\chi_W\Pi \times \chi_W\langle|\cdot|^\frac{1+l'}{2},|\cdot|^\frac{l-1}{2}\rangle \times \chi_W\Delta \times \sigma_0' \twoheadrightarrow \sigma.
\]
The only case in which we cannot proceed as above is the one in which $[\frac{1+l'}{2},\frac{l-1}{2}]$ is adjacent to the segment defining $\delta'_i$ for some $i \in \{1,\dotsc, k\}$, that is, when
\[
\delta'_i = \langle|\cdot|^\frac{l'-1}{2},|\cdot|^\frac{1-l'}{2}\rangle = \textnormal{St}_{l'}.
\]
This does not cause severe complications: without loss of generality we may assume that $\delta'_1,\delta'_2,\dotsc,\delta'_i,\dotsc,\delta'_k$ are ordered increasingly with respect to the length of the defining segments. We can apply Lemma \ref{lemma:irr_zel} to swap $\langle|\cdot|^\frac{1+l'}{2},|\cdot|^\frac{l-1}{2}\rangle$ with $\delta'_{i+1},\dotsc,\delta'_k$. After this, we arrive at the following situation:
\[
\dotsb \times \chi_W\delta'_i \times \chi_W\langle|\cdot|^\frac{1+l'}{2},|\cdot|^\frac{l-1}{2}\rangle \times \dotsb \twoheadrightarrow \sigma.
\]
Now, Lemma \ref{rem:swap2} implies that we have
\[
\dotsb\times \chi_W\langle|\cdot|^\frac{1+l'}{2},|\cdot|^\frac{l-1}{2}\rangle \times \chi_W\delta'_i \times \dotsb \twoheadrightarrow \sigma
\]
or
\[
\dotsb \times \chi_W\langle|\cdot|^\frac{3+l'}{2},|\cdot|^\frac{l-1}{2}\rangle \times \chi_W\langle|\cdot|^\frac{l'+1}{2},|\cdot|^\frac{1-l'}{2}\rangle \times \dotsb \twoheadrightarrow \sigma.
\]
The first case leads us to the same conclusion as in \eqref{eq:caseI}, whereas the second---having in mind that we can now swap $\langle|\cdot|^\frac{3+l'}{2},|\cdot|^\frac{l-1}{2}\rangle$ with all the $\delta'_1,\dotsc, \delta'_{i-1}$---leads to
\[
\label{eq:caseII}
\tag{II}
\chi_W\Pi \times \chi_W\langle|\cdot|^\frac{3+l'}{2},|\cdot|^\frac{l-1}{2}\rangle \times \chi_W\langle|\cdot|^\frac{l'+1}{2},|\cdot|^\frac{1-l'}{2}\rangle \times \chi_W\Delta' \rtimes \sigma_0' \twoheadrightarrow \sigma,
\]
where $\Delta' \cong \delta'_1 \times \dotsm \times \widehat{\delta'_i} \times \dotsm \times \delta'_k$ (here $\widehat{\delta'_i}$ signifies that we omit $\delta_i'$ from the product).

In both cases \ref{eq:caseI} and \ref{eq:caseII} we can do the same: using the results of \S \ref{sec:interlude}, $\Pi \times \langle|\cdot|^\frac{1+l'}{2},|\cdot|^\frac{l-1}{2}\rangle$ (resp.~$\Pi \times \langle|\cdot|^\frac{3+l'}{2},|\cdot|^\frac{l-1}{2}\rangle \times \langle|\cdot|^\frac{l'+1}{2},|\cdot|^\frac{1-l'}{2}\rangle$) in the epimorphism can be rearranged into
\[
\Pi' = \overline{\delta}_{t}\nu^{e_t}\times \dotsm \times \overline{\delta}_{1}\nu^{e_1},
\]
where $\overline{\delta}_i$ are irreducible discrete series representations and $e_t \geqslant \dotsb \geqslant e_1 > 0$). To explain why this can be achieved, we use an inductive argument. Let $\Pi =  \delta_{r}\nu^{s_r}\times \dotsm \times \delta_{1}\nu^{s_1}$ be a representation analogous to $\Pi'$ above. We prove that, if $\chi_W\Pi \times \chi_W\langle|\cdot|^c,|\cdot|^d\rangle \rtimes A \twoheadrightarrow \sigma$ (where $A$ is an irreducible representation and $c \leq d$), then this epimorphism can be rearranged into $\chi_W\Pi' \rtimes A \twoheadrightarrow \sigma$ in a suitable manner; here the cuspidal supports of $\Pi \times \langle|\cdot|^c,|\cdot|^d\rangle$ and $\Pi'$ coincide. We induce on $d-c+r$ (the length of the segment plus the number of $\GL$-representations to the left of the segment). To explain the induction step, assume that we have
\[
\dotsm \times \chi_W\delta\nu^s \times \chi_W\langle|\cdot|^c,|\cdot|^d\rangle \times \dotsm \to \sigma.
\]
If $d \leq s$, no further rearrangement is needed, and we may simply write $\chi_W|\cdot|^d \times \dotsm \times \chi_W|\cdot|^c$ instead of $\chi_W\langle|\cdot|^c,|\cdot|^d\rangle$ (recalling that the latter is a quotient of the former), and we are done. Otherwise, setting $s' = \min\{s'\in\{c,c+1,\dotsc,d\}: s \leq s' \leq d\}$, we may write (omitting $\chi_W\langle|\cdot|^c,|\cdot|^{s'-1}\rangle$ if $s' = c$)
\[
\dotsm \times \chi_W\delta\nu^s \times \chi_W\langle|\cdot|^{s'},|\cdot|^d\rangle \times \chi_W\langle|\cdot|^c,|\cdot|^{s'-1}\rangle \times \dotsm \to \sigma.
\]
We now use Lemma \ref{lemma:irr_zel} (or \ref{rem:swap2} if $\delta\nu^s = \langle|\cdot|^{c-1},|\cdot|^a\rangle$ for some $a < c$) to rewrite this as 
\[
\dotsm \times\chi_W\langle|\cdot|^{s'},|\cdot|^d\rangle  \times \chi_W\delta\nu^s \times  \chi_W\langle|\cdot|^c,|\cdot|^{s'-1}\rangle \times \dotsm \to \sigma
\]
or (in case $\delta\nu^s = \langle|\cdot|^{c-1},|\cdot|^a\rangle$) as
\[
\dotsm \times\chi_W\langle|\cdot|^{c+1},|\cdot|^d\rangle \times \chi_W\langle|\cdot|^{c},|\cdot|^a\rangle \times \dotsm \to \sigma.
\]
In both cases, the resulting segment $\langle|\cdot|^{s'},|\cdot|^d\rangle$ (or $\langle|\cdot|^{c+1},|\cdot|^d\rangle$) is shorter than the original segment $\langle|\cdot|^{c},|\cdot|^d\rangle$, so we can continue this procedure inductively. An exception to this is the situation $c > s$ when the resulting segment $\langle|\cdot|^{s'},|\cdot|^d\rangle$ is again equal to $\langle|\cdot|^{c},|\cdot|^d\rangle$. Then the segment we get is not shorter, but there are fewer $\GL$-representations appearing to the left of it, which again enables the induction step.

In short, we get a standard module
\[
\chi_W\Pi' \rtimes \tau \twoheadrightarrow \sigma
\]
for $\sigma$, where $\tau$ is an irreducible (and obviously tempered) subquotient of $\chi_W\Delta \rtimes \sigma_0'$ (in case \ref{eq:caseI}), or  $\chi_W\Delta' \rtimes \sigma_0'$ (in case \ref{eq:caseII}). This shows the following:
\begin{itemize}
    \item[(I)] In case \eqref{eq:caseI}, the cuspidal support of $\Pi'$ consists of $|\cdot|^\frac{l-1}{2},|\cdot|^\frac{l-3}{2},\dotsc,|\cdot|^\frac{l'+1}{2}$ in addition to the cuspidal support of $\Pi$.
    \item[(II)] In case \eqref{eq:caseII}, the cuspidal support of $\Pi'$ consists of $|\cdot|^\frac{l-1}{2},|\cdot|^\frac{l-3}{2},\dotsc,|\cdot|^\frac{l'+1}{2}$ and the segment $[|\cdot|^\frac{1-l'}{2},|\cdot|^\frac{l'-1}{2}]$ in addition to the cuspidal support of $\Pi$.
\end{itemize}
We now use Kudla's filtration to return to the $(W_n)$ tower: We want to get $\theta_l(\sigma)$ (while knowing that $\theta_l(\sigma) = \pi$) by repeated use of Corollary \ref{cor:shaving} on
\[
\chi_W\overline{\delta}_{t}\nu^{e_t}\times \dotsm \times \chi_W\overline{\delta}_{1}\nu^{e_1}\rtimes \tau \twoheadrightarrow \sigma.
\]
If we apply the corollary exactly $t$ times, we get
\[
\chi_V(\overline{\delta}_{t}\nu^{e_t})\times \dotsm \times \chi_V(\overline{\delta}_{1}\nu^{e_1}) \rtimes \Theta_{l-2k}(\tau) \twoheadrightarrow \pi.
\]
Here we use the notation $(\overline{\delta}\nu^{e})$ introduced in Remark \ref{rem:parentnotation}. Furthermore, $k$ denotes the number of segments on which option (ii) of Corollary \ref{cor:shaving} is used, which is why $\Theta_l$ becomes $\Theta_{l-2k}$.

Again we rearrange the representations $\chi_V(\overline{\delta}_i\nu^{e_i})$ in order to get a standard module (i.e.~so that the midpoints of the corresponding segments form a decreasing sequence).

We need to show that this is actually possible.
Each $\overline{\delta}_i\nu^{e_i} = \langle\rho_i\nu^{a_i+e_i},\rho_i\nu^{-a_i+e_i}\rangle$ is defined by a segment with midpoint $e_i > 0$ (here $a_i$ is a non-negative half-integer). After applying Corollary \ref{cor:shaving} and bringing them in front of $\Theta_{l-2k}(\tau)$, some of the $(\overline{\delta}_i\nu^{e_i})$ (namely, those obtained via option (ii)) are defined by slightly modified segments of the form $[\rho_i\nu^{-a_i+e_i}, \rho_i\nu^{a_i+e_i-1}]$, with midpoint $(e_i-\frac 12)$. We have the following possibilities:
\begin{itemize}
    \item If $a_i = 0$, then $[\rho_i\nu^{-a_i+e_i}, \rho_i\nu^{a_i+e_i-1}]$ is empty;
    therefore $(\overline{\delta}_i\nu^{e_i})$ doesn't exist.
    \item It is possible that $e_i-\frac 1 2=0$, i.e. that $0$ is the midpoint of the new segment.
    \item All the other segments satisfy $e_i - \frac 12> 0$: if we use option (ii) in Corollary \ref{cor:shaving}, this implies (among other things) that $e_i$ is a half-integer; in particular, $e_i \geqslant \frac 12$.
\end{itemize}
    Furthermore, note that we can really reorder the $(\overline{\delta}_i\nu^{e_i})$ to obtain a decreasing sequence of exponents. Namely, if this requires us to swap $(\overline{\delta}_{i+1}\nu^{e_{i+1}})$ and $(\overline{\delta}_i\nu^{e_i})$, this means the following: the ordering has changed
    because $\overline{\delta}_{i+1}\nu^{e_{i+1}}$ was obtained by means of option (ii), whereas option (i) was used on $\overline{\delta}_i\nu^{e_i}$---otherwise, they would still be ordered correctly.
    This implies
    \begin{align*}
   (\overline{\delta}_{i+1}\nu^{e_{i+1}}) &= \langle|\cdot|^{-a_{i+1}+e_{i+1}},|\cdot|^{a_{i+1}+e_{i+1}-1}\rangle\\
    (\overline{\delta}_i\nu^{e_i}) &=\langle\rho\nu^{a_{i}+e_{i}},\rho\nu^{-a_{i}+e_{i}}\rangle.
    \end{align*}
    If we assume that these segments are linked, then $\rho = \mathbb{1}$ and the following assertions hold:
    \begin{itemize}
    \item the segments are linked, so we have $e_i-e_{i+1} \in \frac 12\mathbb{Z}$;
    \item they need to be swapped, so $e_{i+1}-\frac 12 < e_i$;
    \item the original ordering implies $e_{i+1} \geqslant e_i$.
    \end{itemize}
    This is only possible if $e_i = e_{i+1}$. From here we easily deduce that the segments cannot really be linked, so they can freely switch places.

We have thus shown that the desired rearrangement is indeed possible. In short, we can write
\[
\chi_V\Pi'' \times \chi_V\Delta'' \rtimes \Theta_{l-2k}(\tau) \twoheadrightarrow \pi.
\]
Here $\Pi''\times \Delta''$ denotes the product of $(\overline{\delta}_i\nu^{e_i})$ (in decreasing order of $e_i$); here we have grouped all the segments of the form $\langle|\cdot|^{a},|\cdot|^{-a}\rangle$ into $\Delta''$.

Note that $l-2k > 0$: $\Theta_{l-2k}(\tau)$ indicates that $\frac{l+1}{2}-k$ was the endpoint of the last segment on which we used option (ii) of Corollary \ref{cor:shaving}; therefore $\frac{l+1}{2}-k>0$, i.e.~$l-2k>-1$ (in fact, $l-2k$ is odd so we have $l-2k>0$). Since $l-2k > 0$, Proposition \ref{prop:Lpackets} shows that all the subquotients of $\Theta_{l-2k}(\tau)$ are tempered. We thus see that the standard module of $\pi$ is equal to
\[
\chi_V\Pi'' \rtimes \pi_0'',
\]
where $\pi_0''$ is a (tempered) irreducible subquotient of $\chi_V\Delta'' \rtimes \Theta_{l-2k}(\tau)$. The uniqueness of the standard module now forces
\[
\chi_V\Pi'' = \chi_V\Pi \quad \text{and}\quad \pi_0'' \cong \pi_0.
\]
In particular, $\Pi''$ and $\Pi$ have the same cuspidal support. We have already compared the cuspidal supports of $\Pi$ and $\Pi'$. On the other hand, since option (ii) of Corollary \ref{cor:shaving} was applied exactly $k$ times, we see that, compared to $\Pi'$, the cuspidal support of $\Pi''$ is missng
\[
|\cdot|^\frac{l-1}{2}, |\cdot|^\frac{l-3}{2}, \dotsc, |\cdot|^{\frac{l+1}{2}-k},
\]
along with all the segments grouped into $\Delta''$. In other words:
\begin{align*}
S(\Pi') &= S(\Pi) \cup
\begin{cases}
\{|\cdot|^\frac{l-1}{2},|\cdot|^\frac{l-3}{2},\dotsc,|\cdot|^\frac{l'+1}{2}\}, &\quad \text{in case I},\\
 \{|\cdot|^\frac{l-1}{2},|\cdot|^\frac{l-3}{2},\dotsc,|\cdot|^\frac{l'+1}{2}\} \cup [|\cdot|^\frac{1-l'}{2},|\cdot|^\frac{l'-1}{2}], &\quad \text{in case II};
\end{cases}\\
S(\Pi') &= S(\Pi'') \cup
\{|\cdot|^\frac{l-1}{2}, |\cdot|^\frac{l-3}{2}, \dotsc, |\cdot|^{\frac{l+1}{2}-k}\} \cup \Delta'',
\end{align*}
where we have used $S(\Pi)$ to denote the cuspidal support of $\Pi$, i.e.~the multiset of all cuspidal representations of the $\GL$-blocks in the Levi factor from which $\Pi$ is induced.
Comparing the two expressions for $S(\Pi')$, we get
\begin{gather*}
\frac{l'+1}{2} = \frac{l+1}{2}-k \quad \text{and} \quad \Delta'' = 
\begin{cases}
\emptyset, &\qquad \text{in case I},\\
[|\cdot|^\frac{1-l'}{2},|\cdot|^\frac{l'-1}{2}], &\qquad \text{in case II}.
\end{cases}
\end{gather*}
Thus in both cases \ref{eq:caseI} and \ref{eq:caseII} this comparison leads to the conclusion $l' = l-2k$.

We now use the other condition: $\chi_V\Delta'' \rtimes \Theta_{l'}(\tau)$ has an irreducible subquotient isomorphic to $\pi_0$. The following lemma shows that this is only possible if $l' = l_0$.

\begin{Lem}
\label{lem:main_lemma}
If $l' > l_0$ then $\chi_V\Delta'' \rtimes \Theta_{l'}(\tau)$ does not contain a subquotient isomorphic to $\pi_0$.
\end{Lem}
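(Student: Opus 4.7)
The plan is to argue by contradiction: assume $\chi_V\Delta''\rtimes\Theta_{l'}(\tau)$ admits an irreducible subquotient isomorphic to $\pi_0$ with $l'>l_0$. The first step is to extract an irreducible subquotient $\rho\subset\Theta_{l'}(\tau)$ such that $\pi_0$ is a subquotient of $\chi_V\Delta''\rtimes\rho$. Because $\tau$ is tempered and $l'>0$, Proposition~\ref{prop:Lpackets} forces every irreducible subquotient of $\Theta_{l'}(\tau)$ to be tempered and to lie in the $L$-packet of $\theta_{l'}(\tau)$; in particular, $\rho$ is tempered. In Case~I ($\Delta''=\emptyset$) this already yields $\pi_0\cong\rho$; in Case~II ($\chi_V\Delta''=\chi_V\textnormal{St}_{l'}$), Lemma~\ref{lemma:goldberg} applied to $\chi_V\textnormal{St}_{l'}\rtimes\rho$ identifies $\pi_0$ as one of finitely many tempered direct summands.

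The second step is to translate this inclusion to the $H$-side via theta reciprocity. In Case~I, combining $\pi_0\hookrightarrow\Theta_{l'}(\tau)$ with Lemma~\ref{lemma:ThetaisHom} and the $G\times H$-symmetry of the Weil representation gives $\tau$ as a subquotient of $\Theta_{-l'}(\pi_0)$. In Case~II, an analogous argument---after absorbing $\chi_W\textnormal{St}_{l'}$ into the tempered support via Lemma~\ref{lemma:temp_supp}---gives $\tau$ as a subquotient of $\Theta_{-l'}(\pi_0')$ for some $\pi_0'$ in the $L$-packet of $\pi_0$. By Theorem~\ref{thm:first_occ} together with the first-occurrence formula for tempered representations (\cite{atobe2017local}, Theorem~4.1), one has $l(\pi_0)=l_0$; consequently the tempered analog of Theorem~\ref{thm:Muic_disc} (derivable from Theorems~4.3 and~4.5 of \cite{atobe2017local}) describes all tempered subquotients of $\Theta_{-l'}(\pi_0)$ for $l'>l_0$ and constrains their $L$-parameters in terms of $\phi_{\pi_0}$.

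The contradiction is then obtained by comparing the multiplicity of $\chi_W S_{l'}$ in the $L$-parameter of $\tau$ with the multiplicity permitted for any tempered subquotient of $\Theta_{-l'}(\pi_0)$. By construction, $\sigma_0'$ is a tempered subquotient of $\Theta_{-l'}(\pi_{00})$ at the deep level $l'>l_0=l(\pi_{00})$; Atobe--Gan's explicit formulas for tempered lifts then force $\chi_W S_{l'}$ to appear in the $L$-parameter of $\sigma_0'$, and $\tau$ inherits this contribution together with those coming from $\Delta$ (or $\Delta'$). On the other hand, $\phi_{\pi_0}=\phi_{\pi_{00}}\oplus\phi_\Delta\oplus\phi_{\Delta^\vee}$ does not carry $\chi_V S_{l'}$---the $\pi_{00}$-part records $\chi_V S_{l_0}$ rather than $\chi_V S_{l'}$, and the $\Delta$-part is unrelated to the relevant twist---so the allowable tempered subquotients of $\Theta_{-l'}(\pi_0)$ cannot match the $\chi_W S_{l'}$-multiplicity carried by $\tau$. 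The main obstacle is executing this $L$-parameter bookkeeping precisely in both cases, and in particular tracking in Case~II the additional $\chi_V\textnormal{St}_{l'}$-twist that complicates the comparison; a possible alternative route is a direct Jacquet-module computation using Tadi\'c's formula~\eqref{eq_Tadic} and Kudla's filtration (Theorem~\ref{tm:Kudla}) to detect a cuspidal support obstruction.
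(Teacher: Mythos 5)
Your proposal has two genuine gaps, and the route you sketch is not the one the paper takes.

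First, the theta-reciprocity step in the second paragraph is not valid at the level of subquotients. Howe duality gives $\theta_{-l'}(\theta_{l'}(\tau)) = \tau$ for the distinguished irreducible quotient, but if $\pi_0$ is merely a subquotient of $\Theta_{l'}(\tau)$ (not its Howe quotient), you cannot conclude that $\tau$ is a subquotient of $\Theta_{-l'}(\pi_0)$. What you do get from $\pi_0$ appearing as a subquotient of $\Theta_{l'}(\tau)$ is that $\tau\otimes\pi_0$ is a subquotient of $\omega$, but that is strictly weaker than membership in the maximal $\pi_0$-isotypic quotient. The paper never makes this jump; instead it stays on the $\tau$-side, expands $\tau$ via its tempered support (Lemma~\ref{lemma:temp_supp}), and then traces $\pi_0$ through $\chi_V\Delta_1\rtimes\Theta_{l'}(\tau')$ by repeated applications of Kudla's filtration (Proposition~\ref{prop:Muic_isotype} and the two-branch dichotomy of Corollary~\ref{cor:shaving}), reducing eventually to the first lift of a discrete series piece $\tau_d$ (or $\tau_1$).

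Second, the $L$-parameter bookkeeping you propose rests on the assertion that ``the $\Delta$-part is unrelated to the relevant twist'' $\chi_VS_{l'}$. That is false in general: $\Delta=\delta_1'\times\dotsm\times\delta_k'$ comes from the tempered support of $\pi_0$, and nothing prevents some $\delta_i'=\textnormal{St}_{l'}$. Indeed, the heart of the paper's proof is precisely the analysis of those occurrences: it introduces $h$, the number of copies of $\chi_W\textnormal{St}_{l'}$ in the tempered support of $\tau$, splits on the parity of $m_{\phi_\tau}(\chi_WS_{l'})$, and in the even case peels off $\textnormal{St}_{l'}$-factors one at a time with a genericity (multiplicity-one) argument killing the non-generic branch of the Kudla filtration. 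Your sketch collapses exactly where this case distinction is needed, and you acknowledge the gap yourself. The paper's conclusion in each case is reached by the uniqueness of tempered support (forcing $\Theta_{l'}(\tau_d)=\pi_{00}$ or $\theta_{l'}(\tau_1)=\pi_{00}$) and then observing that this would make $\theta_{-l'}(\pi_{00})$ tempered for $l'>l_0$, a contradiction---a more concrete mechanism than a bare multiplicity count of $\chi_WS_{l'}$ in parameters.

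So while the contradiction target you aim at (tempered behavior of a deep lift of $\pi_{00}$) matches the paper's, the intermediate steps you propose are either unjustified or would fail. You should replace the reciprocity argument with an explicit passage through Kudla's filtration on the $\tau$-side, and your parameter comparison with the paper's tempered-support decomposition and parity dichotomy.
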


\begin{proof}
Recall that we have fixed the tower $\mathcal{V}$ and that $l_0$ is the smallest odd positive integer such that $\theta_{-l_0}(\pi_{00})$ is non-zero. Also recall that $\theta_{-l_0}(\pi_{00})$ is tempered, whereas all the lifts $\theta_{-l'}(\pi_{00})$ for $l' > l_0$ are non-tempered (see Theorem \ref{thm:Muic_disc}). We assume that $\pi_0$ is a subquotient of $\Theta_{l'}(\tau)$ (in particular, $\Theta_{l'}(\tau) \neq 0$) and argue by contradiction. 

Introducing the tempered support, we write $\tau$ as a direct summand of
\[
\label{eq:tempsupp}\tag{\textasteriskcentered}
\chi_W\delta_1 \times \dotsm \times \chi_W\delta_i \times \chi_W(\textnormal{St}_{l'},h) \rtimes \tau_d,
\]
where $\delta_1, \dotsc, \delta_i, \tau_d$ are discrete series representations, and $h$ denotes the number of occurrences of $\chi_W\textnormal{St}_{l'}$ in the tempered support (also, $(\textnormal{St}_{l'},h) = \textnormal{St}_{l'} \times \dotsm \times \textnormal{St}_{l'}$ $h$ times). Choosing the appropriate subquotient $\tau'$ of $\chi_W(\textnormal{St}_{l'},h) \rtimes \tau_d$ and using Corollary \ref{cor:theta_epi}, we obtain
\[
\chi_V\delta_1 \times \dotsm \times \chi_V\delta_i \rtimes \Theta_{l'}(\tau') \twoheadrightarrow  \Theta_{l'}(\tau).
\]
This allows us to view $\pi_0$ as a subquotient of $\chi_V\Delta_1 \rtimes \Theta_{l'}(\tau')$, where $\Delta_1$ is used to denote $\Delta'' \times \delta_1 \times \dotsm \times \delta_i$. We now consider the multiplicity of $\chi_WS_{l'}$ in the parameter of $\tau$, which we denote by $m_{\phi_\tau}(\chi_WS_{l'})$. Note that $\Theta_{l'}(\tau) \neq 0$ implies $m_{\phi_\tau}(\chi_WS_{l'}) > 0$. We consider the following cases separately:
\begin{itemize}
\item[1.] $m_{\phi_\tau}(\chi_WS_{l'})$ is odd, i.e.~$m_{\phi_\tau}(\chi_WS_{l'}) = 2h+1 > 0$;
\item[2.] $m_{\phi_\tau}(\chi_WS_{l'})$ is even, i.e.~$m_{\phi_\tau}(\chi_WS_{l'}) = 2h > 0$.
\end{itemize}

\bigskip

\noindent Case 1. We can use Proposition \ref{prop:Muic_isotype} and the same arguments as in the proof of Corollary \ref{cor:shaving}. We get that any subquotient of $\Theta_{l'}(\tau')$ is a subquotient of one of the following:
\begin{itemize}
\item $\chi_V(\textnormal{St}_{l'},h)\rtimes \Theta_{l'}(\tau_d)$;
\item $\chi_V(\textnormal{St}_{l'},h-1)\times \chi_V\langle|\cdot|^\frac{l'-3}{2},|\cdot|^\frac{1-l'}{2}\rangle \rtimes \Theta_{l'-2}(\tau_d)$.
\end{itemize}
Both $\Theta_{l'}(\tau_d)$ and $\Theta_{l'-2}(\tau_d)$ are irreducible discrete series representations by Theorem \ref{thm:Muic_disc}. Furthermore, by Proposition III.4.1 (ii) of \cite{kudla1996notes}, $\Theta_{l'-2}(\tau_d)$ is a subquotient of $\chi_V|\cdot|^{\frac{l'-1}{2}} \rtimes \Theta_{l'}(\tau_d)$, so $\pi_0$ is in fact a subquotient of 
\begin{align*}
&\chi_V\Delta_1 \times \chi_V(\textnormal{St}_{l'},h)\rtimes \Theta_{l'}(\tau_d) \label{eq:option1}\tag{i}\\
 \text{or} \quad &\chi_V\Delta_1 \times \chi_V(\textnormal{St}_{l'},h-1)\times \chi_V\langle|\cdot|^\frac{l'-3}{2},|\cdot|^\frac{1-l'}{2}\rangle \times \chi_V|\cdot|^{\frac{l'-1}{2}} \rtimes \Theta_{l'}(\tau_d) \label{eq:option2}\tag{ii}.
\end{align*}
Consider the representation $\chi_V\langle|\cdot|^\frac{l'-3}{2},|\cdot|^\frac{1-l'}{2}\rangle \times \chi_V|\cdot|^{\frac{l'-1}{2}}$ which appears in \eqref{eq:option2}. It has two irreducible subquotients, namely $\chi_V\textnormal{St}_{l'}$ and the corresponding Langlands subrepresentation which we denote by $L$ (see \S\ref{sec:interlude}). Therefore, any irreducible subquotient of \eqref{eq:option2} is either a subquotient of \eqref{eq:option1}, or a subquotient of $\chi_V\Delta \times \chi_V(\textnormal{St}_{l'},h-1)\times L \rtimes \Theta_{l'}(\tau_d)$. The latter representation, however, cannot contain $\pi_0$: Since (ii) contains only one irreducible $\chi$-generic subquotient (by multiplicity one), this claim follows from the fact that $L$ is not $\chi$-generic, combined with the exactness of the twisted Jacquet functor (i.e.~additivity of genericity). This shows that $\pi_0$ is necessarily a subquotient of \eqref{eq:option1}.

By the uniqueness of the tempered support (Lemma \ref{lemma:temp_supp}), we now conclude that $\Theta_{l'}(\tau_d) = \pi_{00}$. However, this implies that $\theta_{-l'}(\pi_{00}) = \tau_d$ is in discrete series despite $l' > l_0$. This contradicts the remarks at the beginning of this proof and shows that $l' > l_0$ is impossible.

\bigskip
\noindent Case 2. In this case, $\Theta_{l'}(\tau_d) = 0$. Recall that $\tau'$ is a direct summand of $\chi_W(\textnormal{St}_{l'},h) \rtimes \tau_d$. Choosing the appropriate irreducible (tempered) subquotient $\tau_1$ of $\chi_W\textnormal{St}_{l'} \rtimes \tau_d$, we define $\tau_{j+1} =\chi_W\textnormal{St}_{l'} \rtimes \tau_{j}$ for $j = 1, 2, \dotsc, h-1$ so that $\tau_h = \tau'$. Thus, $\pi_0$ is a subquotient of 
\[
\chi_V\Delta_1 \rtimes \Theta_{l'}(\tau_h).
\]
Just like in Case 1, this means that $\pi_0$ is a subquotient of
\begin{align*}
&\chi_V\Delta_1 \times \chi_V\textnormal{St}_{l'}\rtimes \Theta_{l'}(\tau_{h-1}) \label{eq:option11}\tag{i}\\
 \text{or} \quad &\chi_V\Delta_1 \times \chi_V\langle|\cdot|^\frac{l'-3}{2},|\cdot|^\frac{1-l'}{2}\rangle \times \chi_V|\cdot|^{\frac{l'-1}{2}} \rtimes \theta_{l'}(\tau_{h-1}) \label{eq:option22}\tag{ii}
\end{align*}
Note that in (ii), we have used the fact that $\tau_{h-1}$ does not contain $\chi_W\textnormal{St}_{l'-2}$ in its tempered support, so that $\Theta_{l'-2}(\tau_{h-1}) = \theta_{l'-2}(\tau_{h-1})$ is irreducible. Again, the same heredity argument we used in Case 1 now shows that $\pi_0$ must be a subquotient of \eqref{eq:option11}.

Repeating this argument $h-1$ times, we get that $\pi_0$ is a subquotient of
\[
\chi_V\Delta_1 \times \chi_V(\textnormal{St}_{l'},h-1) \rtimes \Theta_{l'}(\tau_{1}).
\]
We now claim that $\Theta_{l'}(\tau_{1})$ is irreducible. Indeed, Proposition \ref{prop:Lpackets} shows that all the irreducible subquotients of $\Theta_{l'}(\tau_{1})$ are tempered and belong to the same $L$-packet. Furthermore, by Theorem 4.3 of \cite{atobe2017local}, the parameter of this $L$-packet is multiplicity-free (it contains $\chi_VS_{l'}$ with multiplicity $1$), so that all the representations in this $L$-packet are in discrete series. This implies that $\Theta_{l'}(\tau_{1})$ is semisimple, and therefore (by Howe duality), an irreducible discrete series representation.

The fact that $\Theta_{l'}(\tau_{1})$ is irreducible shows that $\chi_V\Delta_1 \times \chi_V(\textnormal{St}_{l'},h-1) \rtimes \Theta_{l'}(\tau_{1})$ is the tempered support of $\pi_0$. In particular, we have $\theta_{l'}(\tau_{1}) = \pi_{00}$. This forces $\theta_{-l'}(\pi_{00}) = \tau_1$ to be tempered, which is again impossible for $l' > l_0$.
\end{proof}

\noindent This completes our proof of \ref{prop:main}: we have shown that $l' = l_0$, which implies that the subquotient which participates in \eqref{eq:epi1} is equal to $\theta_{-l}(\pi_{00})$. Therefore, we have
\[
\label{eq:epi2}\tag{2}
\chi_W\delta_r\nu^{s_r} \times \dotsm \times \chi_W\delta_1\nu^{s_1} \times \chi_W\Delta \rtimes \theta_{-l}(\pi_{00}) \twoheadrightarrow \sigma.
\]
\end{proof}

\subsection{Determining the standard modules}
\label{subs:standard_modules}
The above epimorphism \eqref{eq:epi2} provides plenty of information, but is not sufficient to uniquely determine $\sigma$. To do this, we have to find the standard module of $\sigma$; we do so in this section. Before we start, let us return for a moment to \eqref{eq:epi0}, Section \ref{subs:subq}. Our goal is to show two things (see Theorem \ref{thm:lifts}):
\begin{itemize}
\item the subquotient of $\Theta_{-l}(\pi_0)$ which participates in that epimorphism is $\theta_{-l}(\pi_0)$;
\item the standard module of $\sigma$ is obtained by adding $\chi_W\delta_1\nu^{s_1}, \dotsc, \chi_W\delta_r\nu^{s_r}$ to the standard module of $\theta_{-l}(\pi_0)$ (and sorting the representations decreasingly with respect to the exponents).
\end{itemize}
The shape of $\theta_{-l}(\pi_0)$ is completely determined by Theorems 4.3 and 4.5 of \cite{atobe2017local}; as it is useful to have in mind during the ensuing calculations, we compile the results of these theorems in the following proposition. In this section we fix $e = \frac{l-1}{2}$.

\begin{Prop}
\label{prop:lifts_temp}
Let $\pi_0 \in \Irr(G(W_n))$ be tempered and $\chi$-generic and let $l > 1$ be odd. We denote by $(\phi,\eta)$ the L-parameter of $\pi_0$, with $\eta$ fixed by the choice of Whittaker datum as explained in \S\ref{subs:LLC}.
\begin{enumerate}[(i)]
\item If $m_{\phi}(\chi_V)$ is even, we have two cases:\\
If $m_{\phi}(\chi_V) = 0$, then $\theta_{-1}(\pi_0)$ is the first appearance of $\pi_0$ on both towers. For $l > 1$ we have the following standard module:
	\[
	\chi_W|\cdot|^e \times \dotsm \times \chi_W|\cdot|^1 \rtimes \theta_{-1}(\pi_0) \twoheadrightarrow \theta_{-l}(\pi_0).
	\]
If $\chi_V$ appears in the tempered support $h > 0$ times, then on the going-down tower, we have
		\[
	\chi_W|\cdot|^e \times \dotsm \times \chi_W|\cdot|^1 \rtimes \theta_{-1}(\pi_0) \twoheadrightarrow \theta_{-l}(\pi_0).
	\]
	while the following holds on the going-up tower:
		\[
	\chi_W|\cdot|^e \times \dotsm \times \chi_W|\cdot|^2 \times \chi_W\textnormal{St}_2\nu^{\frac{1}{2}} \times (\chi_W,h-1) \rtimes \theta_{-1}(\pi_0') \twoheadrightarrow \theta_{-l}(\pi_0).
	\]
	Here $\pi_0'$ is the (unique) irreducible tempered representation such that $\pi_0 \hookrightarrow (\chi_V,h) \rtimes \pi_0'$.

\item If $m_{\phi}(\chi_V)$ is odd and $\mathcal{V}$ is the going-down tower, then $\theta_{-1}(\pi_0)$ is non-zero and tempered and we have
	\[
	\chi_W|\cdot|^e \times \dotsm \times \chi_W|\cdot|^1 \rtimes \theta_{-1}(\pi_0) \twoheadrightarrow \theta_{-l}(\pi_0).
	\]
\item If $m_{\phi}(\chi_V)$ is odd and $\mathcal{V}$ is the going-up tower, then $\theta_{-3}(\pi_0)$ is non-zero and tempered and we have
		\[
	\chi_W|\cdot|^e \times \dotsm \times \chi_W|\cdot|^2 \rtimes \theta_{-3}(\pi_0) \twoheadrightarrow \theta_{-l}(\pi_0).
	\]
\end{enumerate}
\end{Prop}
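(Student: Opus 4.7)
The plan is to derive each item as a direct consequence of \cite[Theorems 4.3 and 4.5]{atobe2017local}, which classify the theta lifts of tempered representations and their first non-tempered continuations. For each case, I would first identify the deepest tempered lift of $\pi_0$ (or of an auxiliary representation $\pi_0'$) on the appropriate tower, then read off the non-tempered Jordan blocks added by Theorem 4.5 in passing from the first lift to level $-l$, and finally convert the resulting $L$-parameter into a standard module via the Langlands classification.

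In case (i) with $m_\phi(\chi_V)=0$, Theorem 4.3 of \cite{atobe2017local} says that $\theta_{-1}(\pi_0)$ is tempered and non-zero on both towers. By Theorem 4.5, the $L$-parameter of $\theta_{-l}(\pi_0)$ (for odd $l>1$) is the parameter of $\theta_{-1}(\pi_0)$ enlarged by the non-tempered Jordan blocks $\chi_W S_3,\chi_W S_5,\dotsc,\chi_W S_l$; under the Langlands classification each block $\chi_W S_{2j+1}$ contributes the character $\chi_W|\cdot|^j$, so sorting in decreasing order of exponents produces exactly the claimed standard module. Case (i) going-down with $m_\phi(\chi_V)>0$ and case (ii) are entirely analogous, since $\theta_{-1}(\pi_0)$ is again tempered by Theorem 4.3. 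Case (iii) runs the same way but with $\theta_{-3}(\pi_0)$ replacing $\theta_{-1}(\pi_0)$ (by Theorem 4.3 this is the first, and tempered, lift on the going-up tower), so the added blocks are $\chi_W S_5, \dotsc, \chi_W S_l$ and the corresponding exponents start at $j=2$.

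The main obstacle is case (i) going-up with $m_\phi(\chi_V)=2h>0$, where by the conservation relation $\theta_{-1}(\pi_0)$ vanishes and the first non-zero lift $\theta_{-3}(\pi_0)$ is already non-tempered. My strategy is to introduce $\pi_0'$, the unique tempered representation with $\pi_0 \hookrightarrow (\chi_V,h)\rtimes\pi_0'$: its parameter has $m(\chi_V)=0$, so $\theta_{-1}(\pi_0')$ is tempered and non-zero on the going-up tower. Theorem 4.5 then identifies the parameter of $\theta_{-l}(\pi_0)$ as the parameter of $\theta_{-1}(\pi_0')$ augmented by $(h-1)$ copies of the tempered block $\chi_W$ and by the non-tempered blocks $\chi_W S_3 \oplus \dotsb \oplus \chi_W S_l$. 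To translate into Langlands data, I would combine one copy of $\chi_W$ with $\chi_W S_3$ to form the essentially square-integrable $\chi_W \textnormal{St}_2 \nu^{\frac{1}{2}} = \chi_W \langle|\cdot|^0,|\cdot|^1\rangle$ of exponent $\tfrac{1}{2}$, absorb the remaining $(h-1)$ copies of $\chi_W$ into the tempered summand $(\chi_W,h-1)\rtimes\theta_{-1}(\pi_0')$ (tempered by Lemma \ref{lemma:goldberg}), and convert each higher block $\chi_W S_{2j+1}$ with $j\geq 2$ into the character $\chi_W|\cdot|^j$. That this assembly yields the correct Langlands quotient can be verified either by comparing $L$-parameters on both sides or, more concretely, by a Jacquet-module calculation using Tadi\'{c}'s formula \eqref{eq_Tadic}.
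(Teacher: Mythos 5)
Your overall strategy — reading the proposition off of Atobe--Gan's Theorems 4.3 and 4.5 and then converting the resulting $L$-parameter into Langlands data — is exactly what the paper does, since it explicitly states the proposition as a compilation of those two theorems. However, your translation of Theorem 4.5 into Jordan blocks is incorrect, as a dimension count immediately reveals. Passing from level $-1$ to level $-l$ (with $l=2e+1$) raises the dimension of the target space, hence of the $L$-parameter, by $l-1 = 2e$. The correct increment is the $2e$-dimensional representation $\chi_W(|\cdot|^{e} \oplus |\cdot|^{-e} \oplus \dotsm \oplus |\cdot|^{1} \oplus |\cdot|^{-1})$, which is the parameter attached to the standard-module factors $\chi_W|\cdot|^e, \dotsc, \chi_W|\cdot|^1$. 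Your claimed blocks $\chi_W S_3 \oplus \chi_W S_5 \oplus \dotsm \oplus \chi_W S_l$ have dimension $\sum_{j=1}^{e}(2j+1) = e^2 + 2e$, which is strictly larger for every $e\ge 1$. Moreover, each $\chi_W S_{2j+1}$ has bounded image on $W_\F$ and hence is a tempered Jordan block; a parameter built from such blocks cannot produce the non-tempered Langlands exponents $|\cdot|^j$ that the standard module requires. So the statement ``each block $\chi_W S_{2j+1}$ contributes the character $\chi_W|\cdot|^j$'' misreads how the Langlands classification pairs quotients with parameters.

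The same confusion undermines the argument in the delicate going-up case with $m_\phi(\chi_V)=2h>0$. The operation ``combine one copy of $\chi_W$ with $\chi_W S_3$ to form $\chi_W\textnormal{St}_2\nu^{1/2}$'' is not a valid manipulation of Weil--Deligne representations: $\chi_W \oplus \chi_W S_3$ and $\chi_W|\cdot|^{1/2}S_2 \oplus \chi_W|\cdot|^{-1/2}S_2$ are non-isomorphic (the former is bounded, the latter is not), even though both are $4$-dimensional. One cannot regroup Jordan blocks to obtain a different parameter; the appearance of $\chi_W\textnormal{St}_2\nu^{1/2}$ in the standard module must be read off directly from Theorem 4.5(2), which in this situation produces the twisted block $\chi_W|\cdot|^{\pm 1/2}S_2$ (not $\chi_W \oplus \chi_W S_3$) together with $\chi_W|\cdot|^{\pm j}$ for $2\le j\le e$ and the tempered part $(\chi_W,h-1)\rtimes\theta_{-1}(\pi_0')$. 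Your conclusion happens to agree with the correct statement, but the ``assembly'' argument does not justify it, and the Jacquet-module verification you mention as an alternative is not carried out. The introduction of $\pi_0'$ is correct and necessary; the gap is entirely in the parameter bookkeeping.
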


Our proof starts by analyzing the map \eqref{eq:epi2} established by Proposition \ref{prop:main}. We have the following cases (each of them corresponding to one of the cases of the previous proposition):
\begin{enumerate}[1.]
\item $m_\phi(\chi_V)$ is even (both towers).
\begin{itemize}
\item[1a.] $m_\phi(\chi_V) = 0$.
\item[1b.] $m_\phi(\chi_V) = 2h > 0$.
\end{itemize}
\item $m_\phi(\chi_V)$ is odd, going-down tower.
\item $m_\phi(\chi_V)$ is odd, going-up tower.
\begin{itemize}
\item[3a.] $m_\phi(\chi_VS_3)$ is odd.
\item[3b.] $m_\phi(\chi_VS_3)$ is even.
\end{itemize}
\end{enumerate}
All the cases share the same basic approach and result in analogous conclusions. However, we do have to treat them separately, mainly because of the exceptional cases which arise in some of them. Case 1a contains all the key ideas (and no tricky exceptions), so we present it in full detail.

\noindent\underline{\textbf{Case 1a:}{ $m_\phi(\chi_V) = 0$}}\\
In this case we know that on both towers, $\theta_{-l}(\pi_{00})$ is the Langlands quotient of
	\[
	\chi_W|\cdot|^e \times \dotsm \times \chi_W|\cdot|^1 \rtimes \theta_{-1}(\pi_{00}).
	\]
This also implies that $\theta_{-l}(\pi_{00})$ is the unique quotient of
\[
\chi_W\langle|\cdot|^1,|\cdot|^e\rangle \rtimes \theta_{-1}(\pi_{00})
\]
(see the notation of Section \ref{sec:interlude}). Combining this with the epimorphism in \eqref{eq:epi2} we get
\[
\chi_W\delta_r\nu^{s_r} \times \dotsm \times \chi_W\delta_1\nu^{s_1} \times \chi_W\Delta \times \chi_W\langle|\cdot|^1,|\cdot|^e\rangle \rtimes \theta_{-1}(\pi_{00}) \twoheadrightarrow \sigma.
\]
We now use Lemma \ref{lemma:irr_zel}: $\chi_W\langle|\cdot|^1,|\cdot|^e\rangle$ can switch places with all the $\delta_i'$ appearing in $\Delta$, because none of them are equal to the trivial character. This means that we can write
\[
\label{eq:epi3}\tag{3}
\chi_W\delta_r\nu^{s_r} \times \dotsm \times \chi_W\delta_1\nu^{s_1} \times \chi_W\langle|\cdot|^1,|\cdot|^e\rangle \times \chi_W\Delta \rtimes \theta_{-1}(\pi_{00}) \twoheadrightarrow \sigma.
\]
Finally, let $\tau$ be an irreducible subquotient of $\chi_W\Delta \rtimes \theta_{-1}(\pi_{00})$ such that
\[
\label{eq:epi4}\tag{4}
\chi_W\delta_r\nu^{s_r} \times \dotsm \times \chi_W\delta_1\nu^{s_1} \times \chi_W\langle|\cdot|^1,|\cdot|^e\rangle \rtimes \tau \twoheadrightarrow \sigma.
\]
Note that $\tau$ is tempered, because $\theta_{-1}(\pi_{00})$ is, too (moreover, in this case, $\theta_{-1}(\pi_{00})$ is in discrete series), as are all the irreducible subquotients of $\Delta$.

We now claim that the representation appearing on the left-hand side of \eqref{eq:epi4} has a unique irreducible quotient. A similar statement appears in all the other cases, so we prove this statement in somewhat greater generality.
\begin{Lem}
\label{lemma:unique_quotient}
Let $a_i,b_i \in \mathbb{R}$ be such that $b_i-a_i \in \mathbb{Z}_{\geq 0}$ and let $\rho_i$ be an irreducible unitary cuspidal representation of $\GL_{d_i}(\F)$ for $i = 1,\dotsc, r$. Set $\delta_i\nu^{s_i} = \langle |\cdot|^{b_i}\rho_i, |\cdot|^{a_i}\rho_i \rangle$ with $s_i = \frac{a_i+b_i}{2}$, $i=1,\dotsc,r$ and assume that $s_r \geq \dotsb \geq s_1>0$. Furthermore, let $\tau$ be an irreducible tempered representation of $H(V_m)$.

Consider the representation
\[
\label{eq_lemma7.5}\tag{\textasteriskcentered}
\chi_W\delta_r\nu^{s_r} \times \dotsm \times \chi_W\delta_j\nu^{s_j} \times \chi_W\langle|\cdot|^{c_0},|\cdot|^{d_0}\rangle \times \chi_W\delta_{j-1}\nu^{s_{j-1}} \times \dotsm \times \chi_W\delta_1\nu^{s_1} \rtimes \tau
\]
where $c_0>0$, $d_0 \in \mathbb{R}$ such that $d_0-c_0 \in \mathbb{Z}_{\geq 0}$. Assume that $s_i < c_0$ for $i = 1,\dotsc, j-1$ and $b_i \neq c_0-1$ for $i = j, j+1,\dotsc,r$. Then the representation \eqref{eq_lemma7.5} has a unique irreducible quotient, given by
\[
L(\chi_W\delta_r\nu^{s_r},\dotsc,\chi_W\delta_1\nu^{s_1}, \chi_W|\cdot|^{d_0},\dotsc,|\cdot|^{c_0}; \tau)
\]
\end{Lem}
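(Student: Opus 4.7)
The strategy is to realize $\sigma$ (the representation in \eqref{eq_lemma7.5}) as a quotient of a standard module in the Langlands classification, so that uniqueness of the Langlands quotient furnishes both the existence and uniqueness of the irreducible quotient of $\sigma$.

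The first step is to lift the factor $\chi_W\langle|\cdot|^{c_0},|\cdot|^{d_0}\rangle$, which is not essentially square integrable, to its associated character chain. Since $\chi_W\langle|\cdot|^{c_0},|\cdot|^{d_0}\rangle$ is the unique irreducible quotient of $\chi_W|\cdot|^{d_0}\times\dotsm\times\chi_W|\cdot|^{c_0}$, exactness of parabolic induction gives a surjection $\widetilde\sigma \twoheadrightarrow \sigma$, where $\widetilde\sigma$ is obtained from $\sigma$ by replacing the Langlands-quotient factor with the full character chain.

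The heart of the argument is then to rearrange $\widetilde\sigma$ into a sorted standard module whose Langlands quotient is $L := L(\chi_W\delta_r\nu^{s_r},\dotsc,\chi_W\delta_1\nu^{s_1},\chi_W|\cdot|^{d_0},\dotsc,\chi_W|\cdot|^{c_0};\tau)$, via a sequence of applications of Lemma \ref{lemma:irr_zel}. The hypotheses are tailored precisely for this rearrangement. For $i<j$, the inequality $s_i < c_0$ gives $a_i \leq s_i < c_0 \leq k$ for every $k \in [c_0,d_0]$, so each character $\chi_W|\cdot|^k$ already sits in its sorted position relative to $\chi_W\delta_i\nu^{s_i}$ (namely, to the left) and no commutation with these factors is required. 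For $i \geq j$, the hypothesis $b_i \neq c_0-1$ rules out the one adjacency of $[a_i,b_i]$ with $\{c_0\}$ from below; conversely, the only other potential adjacency $a_i = k+1$ with $k \in [c_0,d_0]$ would force $s_i \geq a_i > k$, so that $\chi_W|\cdot|^k$ should remain to the right of $\chi_W\delta_i\nu^{s_i}$ in sorted order, exactly where it already sits in $\widetilde\sigma$. In every other pairing of adjacent factors the segments are non-linked, so Lemma \ref{lemma:irr_zel} allows all the swaps needed to reach the sorted standard form.

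Once $\widetilde\sigma$ has been recognized as the standard module of $L$, the Langlands classification identifies $L$ as its unique irreducible quotient; since $\widetilde\sigma \twoheadrightarrow \sigma$, any irreducible quotient of $\sigma$ is an irreducible quotient of $\widetilde\sigma$ and therefore equals $L$. The main technical difficulty lies in the bookkeeping of the rearrangement: one must track systematically which swaps are needed as the various characters $\chi_W|\cdot|^k$ migrate into their sorted positions, and verify that the two assumptions of the lemma together suffice to exclude every harmful adjacency, so that the cleaner Lemma \ref{lemma:irr_zel} carries the whole argument and no appeal to the more delicate Lemma \ref{rem:swap2} is required.
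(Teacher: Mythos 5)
Your rearrangement step has a gap: the hypotheses do not in fact exclude every adjacency that forces a swap. The assumption $b_i \neq c_0 - 1$ only rules out the adjacency of $[a_i,b_i]$ with the singleton $\{c_0\}$; it says nothing about the adjacencies $k = b_i + 1$ for $k \in (c_0, d_0]$ and $i \geq j$, which you have overlooked. These occur and genuinely require a swap: when $k = b_i + 1$ one has $s_i = \tfrac{a_i+b_i}{2} \leq b_i < k$, so in the sorted standard module $\chi_W|\cdot|^k$ must sit to the left of $\chi_W\delta_i\nu^{s_i}$, whereas in $\widetilde{\sigma}$ (with $i \geq j$) it sits to the right; the two factors must therefore cross. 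Any sequence of adjacent transpositions that changes their relative order must at some point transpose them directly, and by Lemma \ref{rem:swap2} the product of the two adjacent segments is reducible of length two, so Lemma \ref{lemma:irr_zel} cannot justify that step. For a concrete instance take $r=j=1$, $\delta_1\nu^{s_1} = \langle|\cdot|^{c_0},|\cdot|^{a_1}\rangle$ with $a_1 < c_0$ (so $b_1 = c_0 \neq c_0-1$, hypotheses satisfied) and any $d_0 \geq c_0+1$: then $\chi_W|\cdot|^{c_0+1}$ is adjacent to $\chi_W\delta_1\nu^{s_1}$ and, since $s_1 < c_0+1$, must cross it.

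This is exactly the obstruction that the paper's proof addresses with Lemma \ref{lem:tricky_condition}, and it is why your final claim that ``no appeal to the more delicate Lemma \ref{rem:swap2} is required'' cannot be sustained. The paper does not expand $\chi_W\langle|\cdot|^{c_0},|\cdot|^{d_0}\rangle$ into a full chain of characters; it keeps the Zelevinsky quotient as a block. When that block becomes adjacent to some $\chi_W\delta_j\nu^{s_j}$, it is split as $\chi_W\langle|\cdot|^{s},|\cdot|^{d_0}\rangle \times \chi_W\langle|\cdot|^{c_0},|\cdot|^{s-1}\rangle$ with $s$ chosen so that the left piece $[s,d_0]$ is \emph{not} adjacent to $[a_j,b_j]$ (this is Lemma \ref{lem:tricky_condition}, whose proof shows $s \neq b_j+1$). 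Lemma \ref{lemma:irr_zel} then allows commuting $\chi_W\langle|\cdot|^{s},|\cdot|^{d_0}\rangle$ past $\chi_W\delta_j\nu^{s_j}$, and an induction on the length of the remaining segment plus the number of $\GL$-factors to its left completes the argument. Your proposed reduction to Lemma \ref{lemma:irr_zel} alone cannot be made to work without incorporating some version of this segment-splitting device.
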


\begin{proof}
We will show that the representation \eqref{eq_lemma7.5} is itself a quotient of a standard module, and the conclusion will follow. This standard module is obtained by inserting the representations $\chi_W|\cdot|^k$ (for $k = c_0,\dotsc, d_0$) among the representations $\delta_i\nu^{s_i}$ in the standard module
\[
\chi_W\delta_r\nu^{s_r} \times \dotsm \times \chi_W\delta_1\nu^{s_1} \rtimes \tau
\]
so that the ordering of the exponents is preserved. 

If $s_j \geqslant d_0$ (or if there are no representations $ \chi_W\delta_j\nu^{s_j},\dotsc, \chi_W\delta_r\nu^{s_r}$), then the representation \eqref{eq_lemma7.5} is a quotient of the standard module
\[
\chi_W\delta_r\nu^{s_r} \times \dotsm \times \chi_W\delta_j\nu^{s_j} \times \chi_W|\cdot|^{d_0}\times \dotsm \times \chi_W|\cdot|^{c_0} \times \chi_W\delta_{j-1}\nu^{s_{j-1}} \times \dotsm \times \chi_W\delta_1\nu^{s_1} \rtimes \tau
\]
and we are done. If $s_j < d_0$ we use the following technical observation, based on Lemma \ref{lemma:irr_zel}.
\begin{Lem}
\label{lem:tricky_condition}
Let $\sigma$ be an irreducible representation of $H(V_m)$ and let $a,b,c,d \in \mathbb{R}$ be numbers such that $a\leq b$, $c\leq d$ and $\frac{a+b}{2} \leqslant d$. Assume that
\[
A \times \langle|\cdot|^b,|\cdot|^a\rangle \times \langle|\cdot|^c,|\cdot|^d\rangle \rtimes \sigma_0 \twoheadrightarrow \sigma
\]
for some representations $A$ and $\sigma_0$ . If
\[
\label{eq:tricky1}\tag{i}
c \neq b+1
\]
then setting $s =\mathrm{min}\{s''\in[c,d]:s''\ge \frac{a+b}{2}\}$ we have
\[
A \times \langle|\cdot|^s,|\cdot|^d\rangle \times \langle|\cdot|^b,|\cdot|^a\rangle \times  \langle|\cdot|^c,|\cdot|^{s-1}\rangle \rtimes \sigma_0 \twoheadrightarrow \sigma.
\]
Here, the segment $[c,s-1]$ can be empty (in that case, we omit $\langle|\cdot|^c,|\cdot|^{s-1}\rangle$). Assume, a fortiori, that
\[
\label{eq:tricky2}\tag{ii}
c-1 \lneqq \frac{a+b}{2};
\]
notice that this implies $c\neq b+1$. Then $s-1 \lneqq \frac{a'+b'}{2}$ for any $a',b'$ such that $\frac{a'+b'}{2} \geq \frac{a+b}{2}$.
\end{Lem}

\begin{proof}
We know that $ \langle|\cdot|^c,|\cdot|^d\rangle$ is a quotient of $ \langle|\cdot|^s,|\cdot|^d\rangle \times  \langle|\cdot|^c,|\cdot|^{s-1}\rangle$, so we have
\[
A \times \langle|\cdot|^b,|\cdot|^a\rangle \times \langle|\cdot|^s,|\cdot|^d\rangle \times \langle|\cdot|^c,|\cdot|^{s-1}\rangle \rtimes \sigma_0 \twoheadrightarrow \sigma.
\]
If \eqref{eq:tricky1} holds, then Lemma \ref{lemma:irr_zel} shows that $\langle|\cdot|^b,|\cdot|^a\rangle$ and $\langle|\cdot|^s,|\cdot|^d\rangle$ can switch places. We thus get
\[
A \times \langle|\cdot|^s,|\cdot|^d\rangle \times \langle|\cdot|^b,|\cdot|^a\rangle \times \langle|\cdot|^c,|\cdot|^{s-1}\rangle \rtimes \sigma_0 \twoheadrightarrow \sigma,
\]
as required.

For the second part of the claim, assume condition \eqref{eq:tricky2} is satisfied. Then (i) obviously holds because $a \leq b$. Thus $s -1 \lneqq \frac{a'+b'}{2}$ is also obvious since we have $s-1 \lneqq \frac{a+b}{2} \leq \frac{a'+b'}{2}$.
\end{proof}

\noindent Lemma \ref{lem:tricky_condition} allows us to prove Lemma \ref{lemma:unique_quotient} inductively with respect to $(d_0-c_0)+(r-j)$ (the length of $[c_0,d_0]$ plus the number of representations appearing to the left of $\chi_W\langle|\cdot|^{c_0},|\cdot|^{d_0}\rangle$). The case when $r-j = -1$ (i.e.~no representations to the left) is treated before Lemma \ref{lem:tricky_condition}, and so is the case $s_j \geq d_0$. We therefore assume $s_j < d_0$. The base case $d_0-c_0=0$ is also trivial: since no $b_i$ is equal to $c_0-1$, the segment $[|\cdot|^{c_0},|\cdot|^{c_0}]$ is not linked to any of the segments which define $\delta_i\nu^{s_i}$ for $i\geq j$. Therefore $|\cdot|^{c_0}$ can switch places with any $\delta_i\nu^{s_i}$ (with $i\geq j$) such that $s_i < c_0$. We can thus rearrange \eqref{eq_lemma7.5} to obtain the standard module.

To perform the induction step, assume that $d_0-c_0 > 0$. First, if $c_0 - 1 \geq \frac{a_j+b_j}{2}$, we may use Lemma \ref{lemma:irr_zel}: the segments $[|\cdot|^{c_0},|\cdot|^{d_0}]$ and $[|\cdot|^{a_j},|\cdot|^{b_j}]$ are not adjacent because of our assumption that $b_j \neq c_0-1$. Therefore, we may switch the places of $\delta_j\nu^{s_j}$ and $\chi_W\langle|\cdot|^{c_0},|\cdot|^{d_0}\rangle$.

Performing this switch a suitable number of times, we may assume that $c_0 - 1 < \frac{a_j+b_j}{2}$ so that condition (ii) of Lemma \ref{lem:tricky_condition} is fulfilled. We may use thus Lemma \ref{lem:tricky_condition} (with $\langle|\cdot|^b,|\cdot|^a\rangle = \delta_j\nu^{s_j}$ and $\langle|\cdot|^c,|\cdot|^d\rangle = \langle|\cdot|^{c_0},|\cdot|^{d_0}\rangle$) to show that we can write $\langle|\cdot|^s,|\cdot|^{d_0}\rangle \times\delta_j\nu^{s_j} \times  \langle|\cdot|^{c_0},|\cdot|^{s-1}\rangle$ instead of $\delta_j\nu^{s_j} \times \langle|\cdot|^{c_0},|\cdot|^{d_0}\rangle$ in \eqref{eq_lemma7.5}.
(Notice that Lemma \ref{lem:tricky_condition} is not needed here when $\rho_j \neq \mathbb{1}$). Writing $|\cdot|^{s-1} \times \dotsm\times|\cdot|^{c_0}$ instead of $\langle|\cdot|^{c_0},|\cdot|^{s-1}\rangle$, we now arrive at
\begin{align*}
\chi_W\delta_r\nu^{s_r} \times \dotsm \times \chi_W\delta_{j+1}\nu^{s_{j+1}} &\times \langle|\cdot|^s,|\cdot|^{d_0}\rangle \times\delta_j\nu^{s_j} \times  |\cdot|^{s-1} \times \dotsm\times|\cdot|^{c_0}\\
& \times \chi_W\delta_{j-1}\nu^{s_{j-1}} \times \dotsm \times \chi_W\delta_1\nu^{s_1} \rtimes \tau
\end{align*}
This is certainly a representation of the form described by Lemma \ref{lemma:unique_quotient}: since we have used condition (ii), Lemma \ref{lem:tricky_condition} ensures that $b_i \neq s-1$ for $i = j+1,\dotsc,r$.
If $s > c_0$, then $d_0-s < d_0-c_0$ and the inductive step is thus completed. Otherwise, the length of the segment stays the same, but we have decreased the number of representations appearing to the left of it. Therefore, the induction step is complete. This concludes the proof.
\end{proof}

We return to our discussion preceding Lemma \ref{lemma:unique_quotient}. Applying the lemma to \eqref{eq:epi4} shows that $\chi_W\delta_r\nu^{s_r} \times \dotsm \times \chi_W\delta_1\nu^{s_1} \times \chi_W\langle|\cdot|^1,|\cdot|^e\rangle \rtimes \tau$ has a unique irreducible quotient. Note that the proof of the lemma also shows that the standard module of $\sigma$ is obtained by inserting $|\cdot|^1,\dotsc,|\cdot|^e$ among the $\delta_i\nu^{s_i}$ so that the ordering of the exponents is preserved. It remains to determine $\tau$, i.e.~the tempered part of the standard module of $\sigma$. By the uniqueness of the irreducible quotient, from \eqref{eq:epi4} we get
\[
\label{eq:epi5}\tag{5}
\chi_W\delta_r\nu^{s_r} \times \dotsm \times \chi_W\delta_1\nu^{s_1} \rtimes \tau' \twoheadrightarrow \sigma,
\]
where $\tau'$ is the unique irreducible quotient of $\chi_W\langle|\cdot|^1,|\cdot|^e\rangle \rtimes \tau$ (that is, the Langlands quotient of $\chi_W|\cdot|^e \times \dotsm \times \chi_W|\cdot|^1 \rtimes \tau$). It is now important to notice that we may assume that $\tau'$ is a subquotient of $\Theta_{-l}(\pi_0)$. This observation will be used in all the other cases as well, so we prove a more general statement.
\begin{Lem}
\label{lemma:really_subq}
Assume that the irreducible subquotient $\tau'$ of $\chi_W\Delta \rtimes \theta_{-l}(\pi_{00})$ which participates in \eqref{eq:epi2} is uniquely determined. Then there is a subquotient of $\Theta_{-l}(\pi_0)$ isomorphic to $\tau'$ which participates in \eqref{eq:epi0}.
\end{Lem}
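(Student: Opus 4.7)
The strategy is to show that the irreducible composition factor of $\Theta_{-l}(\pi_0)$ produced by \eqref{eq:epi0} is forced to coincide with $\tau'$. Since $\sigma$ is irreducible and parabolic induction is exact, passing \eqref{eq:epi0} through a composition series of $\Theta_{-l}(\pi_0)$ yields an irreducible composition factor $\tau''$ of $\Theta_{-l}(\pi_0)$ satisfying
\[
\chi_W\delta_r\nu^{s_r} \times \dotsm \times \chi_W\delta_1\nu^{s_1} \rtimes \tau'' \twoheadrightarrow \sigma.
\]
My aim is to prove $\tau''\cong\tau'$, which immediately gives the claim.

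Next, I transport $\tau''$ back to the $\pi_{00}$-level. The surjection $\chi_W\Delta \rtimes \Theta_{-l}(\pi_{00}) \twoheadrightarrow \Theta_{-l}(\pi_0)$ (produced in \S\ref{subs:subq} via Corollary \ref{cor:theta_epi}) exhibits $\tau''$ as an irreducible composition factor of $\chi_W\Delta \rtimes \Theta_{-l}(\pi_{00})$, hence of $\chi_W\Delta \rtimes \sigma_0$ for some irreducible composition factor $\sigma_0$ of $\Theta_{-l}(\pi_{00})$. I then claim $\sigma_0\cong\theta_{-l}(\pi_{00})$; once this is known, $\tau''$ is a composition factor of $\chi_W\Delta \rtimes \theta_{-l}(\pi_{00})$ that participates in \eqref{eq:epi2}, and the uniqueness hypothesis of the lemma forces $\tau''\cong\tau'$.

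To establish $\sigma_0\cong\theta_{-l}(\pi_{00})$ I rerun the proof of Proposition \ref{prop:main}. Writing $\sigma_0$ in the Langlands form $L(\chi_W\langle|\cdot|^{(1+l')/2},|\cdot|^{(l-1)/2}\rangle \rtimes \sigma_0')$ from Remark \ref{rem:possible_subq}, the representation $\chi_W\Pi \rtimes \tau''$ becomes a subquotient of $\chi_W\Pi \times \chi_W\Delta \times \chi_W\langle|\cdot|^{(1+l')/2},|\cdot|^{(l-1)/2}\rangle \rtimes \sigma_0'$. The segment-rearrangement machinery of \S\ref{subs:subq} (based on Lemmas \ref{lemma:irr_zel} and \ref{rem:swap2}) applies to the surjection $\chi_W\Pi \rtimes \tau'' \twoheadrightarrow \sigma$ and produces a standard module $\chi_W\Pi' \rtimes \tau \twoheadrightarrow \sigma$, after which iterated application of Corollary \ref{cor:shaving} and uniqueness of the standard module of $\pi$ force $l'=l_0$ via Lemma \ref{lem:main_lemma}, exactly as in the proof of Proposition \ref{prop:main}.

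\textbf{Main obstacle.} The delicate point is justifying that the argument of Proposition \ref{prop:main} transfers from its original setting (a full surjection $\chi_W\Pi \times \chi_W\Delta \rtimes \sigma_0 \twoheadrightarrow \sigma$) to the weaker situation here, where $\tau''$ is only known to be a composition factor of $\chi_W\Delta \rtimes \sigma_0$. The decisive observation is that the quotient map $\chi_W\Pi \rtimes \tau'' \twoheadrightarrow \sigma$ is still available, and all subsequent manipulations in Proposition \ref{prop:main}---the rearrangement into a standard module, the use of Corollary \ref{cor:shaving} to pull back to $\pi$, and the invocation of the uniqueness of the standard module of $\pi$---depend only on this surjection together with the ambient cuspidal support, not on a surjection from the full induced module. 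Once this adaptation is checked, the conclusion $\sigma_0\cong\theta_{-l}(\pi_{00})$ follows, and the uniqueness hypothesis closes the argument.
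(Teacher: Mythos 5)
Your overall plan is in the spirit of the paper's argument (extract a composition factor from \eqref{eq:epi0}, transport it into the framework of \eqref{eq:epi2}, close with the uniqueness hypothesis), but the execution differs from the paper's in a way that creates a genuine gap, and the gap sits exactly where you flag your ``main obstacle.''

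The problem is in the step establishing $\sigma_0\cong\theta_{-l}(\pi_{00})$ by ``rerunning'' the proof of Proposition \ref{prop:main}. That proof is anchored on a \emph{surjection}
\[
\chi_W\Pi \times \chi_W\Delta \times\chi_W\langle|\cdot|^{\frac{1+l'}{2}},|\cdot|^{\frac{l-1}{2}}\rangle \rtimes \sigma_0' \twoheadrightarrow \sigma,
\]
which is then repeatedly rewritten via Lemmas \ref{lemma:irr_zel} and \ref{rem:swap2}; the dichotomy of Lemma \ref{rem:swap2} (which piece of a length-two module carries $\sigma$ as a quotient) and the subsequent application of Corollary \ref{cor:shaving} both require the surjectivity from the fully decomposed induced module. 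In your setting you only know that $\tau''$ is a composition factor (not a quotient) of $\chi_W\Delta \rtimes \sigma_0$ together with $\chi_W\Pi \rtimes \tau'' \twoheadrightarrow \sigma$; this gives $\sigma$ merely as a \emph{subquotient} of $\chi_W\Pi\times\chi_W\Delta \rtimes \sigma_0$, and hence of the big product. The rearrangement machinery does not transport along subquotient relations in the way your argument needs: at each Lemma \ref{rem:swap2} step you would lose control of whether $\sigma$ remains a quotient, and Corollary \ref{cor:shaving} cannot then be invoked. Your claim that the manipulations ``depend only on this surjection together with the ambient cuspidal support'' is precisely the assertion that needs proof, not a reason; as written it is not justified. (Also note $\tau''$ need not be tempered for $l>1$, so $\chi_W\Pi\rtimes\tau''$ is not itself a standard module, which compounds the difficulty.)

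The paper sidesteps all of this by a cleaner construction: it defines $f\colon \chi_W\Delta\rtimes\Theta_{-l}(\pi_{00})\twoheadrightarrow\Theta_{-l}(\pi_0)$ and $g\colon \chi_W\Pi\rtimes\Theta_{-l}(\pi_0)\twoheadrightarrow\sigma$ explicitly, uses Proposition \ref{prop:main} purely as a black box to show $\chi_W\Pi\times\chi_W\Delta\rtimes\Theta^0\subseteq\ker(g\circ\mathrm{Ind}(f))$ (with $\Theta^0$ the radical of $\Theta_{-l}(\pi_{00})$), and then passes to the common quotient $\chi_W\Pi\rtimes(\Theta_{-l}(\pi_0)/N)$ to exhibit a single composition factor $\tau''$ of $\Theta_{-l}(\pi_0)$ that participates \emph{simultaneously} in a factorization of both \eqref{eq:epi0} and \eqref{eq:epi2}. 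The simultaneity is exactly what makes the uniqueness hypothesis applicable without redoing the Proposition \ref{prop:main} analysis. If you want to salvage your route, you would need to establish, independently, that $\sigma$ is in fact a \emph{quotient} (not merely a subquotient) of $\chi_W\Pi\times\chi_W\Delta\rtimes\sigma_0$ for the $\sigma_0$ you produce — but by that point you are essentially reproducing the paper's factorization argument.
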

\begin{proof}
We explain the maps \eqref{eq:epi0}--\eqref{eq:epi2} in more detail and recapitulate the preceding discussion. Let $\Pi$ denote $\delta_r\nu^{s_r} \times \dotsm \times \delta_1\nu^{s_1}$. Starting from
\[
f \colon \chi_W\Delta \rtimes \Theta_{-l}(\pi_{00}) \twoheadrightarrow \Theta_{-l}(\pi_0)
\]
we induce to obtain
\[
\text{Ind}(f)\colon \chi_W\Pi \times \chi_W\Delta \rtimes \Theta_{-l}(\pi_{00}) \twoheadrightarrow \chi_W\Pi \rtimes \Theta_{-l}(\pi_0).
\]
Now let $g \colon \chi_W\Pi \rtimes \Theta_{-l}(\pi_0) \twoheadrightarrow \sigma$ denote the epimorphism in \eqref{eq:epi0}. Composing $g$ with $\text{Ind}(f)$ we arrive at \eqref{eq:epi1}. Thus epimorphism \eqref{eq:epi1} factors through $g$:
\[
\chi_W\Pi \times \chi_W\Delta \rtimes \Theta_{-l}(\pi_{00}) \twoheadrightarrow \chi_W\Pi \rtimes {\Theta_{-l}(\pi_0)} \overset{g}{\twoheadrightarrow} \sigma. 
\]
Proposition \ref{prop:main} shows that no subquotient of $\Theta_{-l}(\pi_{00})$ except $\theta_{-l}(\pi_{00})$ can participate in the above epimorphism $g\circ \text{Ind}(f)$. In other words, we have $\chi_W\Pi \times \chi_W\Delta \rtimes \Theta^0 \subseteq \ker g\circ \text{Ind}(f)$ where we have used $\Theta^0$ to denote the maximal proper subrepresentation of $\Theta_{-l}(\pi_{00})$.

We may therefore take the quotient of $g \circ \text{Ind}(f)$ by $\chi_W\Pi \times \chi_W\Delta \rtimes \Theta^0$ to obtain the map $\widetilde{g \circ \text{Ind}(f)}$, which gives us the epimorphism in \eqref{eq:epi2}. Thus \eqref{eq:epi2} may be expressed as
\[
\chi_W\Pi \times \chi_W\Delta \rtimes \theta_{-l}(\pi_{00}) \cong \chi_W\Pi \times \chi_W\Delta \rtimes (\Theta_{-l}(\pi_{00})/\Theta^0)  \twoheadrightarrow \chi_W\Pi \rtimes (\Theta_{-l}(\pi_0)/N) \overset{\tilde{g}}{\twoheadrightarrow} \sigma,
\]
where $N$ denotes the subrepresentation of $\Theta_{-l}(\pi_0)$ equal to $f(\chi_W\Delta \rtimes \Theta^0)$, and $\tilde{g}$ denotes the corresponding quotient of the map $g$.

We now take any irreducible subquotient $\tau''$ of $\Theta_{-l}(\pi_0)/N$ which participates in the map $\tilde{g}$. Since $\Theta_{-l}(\pi_0)/N$ is a quotient of $\Theta_{-l}(\pi_{0})$, $\tau''$ appears as a subquotient of $\Theta_{-l}(\pi_{0})$ which participates in $g$, i.e.~\eqref{eq:epi0}. We claim that $\tau''$ is necessarily isomorphic to $\tau'$.

Indeed, $\Theta_{-l}(\pi_0)/N$ is at the same time a quotient of $\chi_W\Delta \rtimes \theta_{-l}(\pi_{00})$, so $\tau''$ also appears as a subquotient of $\chi_W\Delta \rtimes \theta_{-l}(\pi_{00})$ which participates in the epimorphism displayed above, i.e.~\eqref{eq:epi2}. By the assumption in the statement of this lemma, such a subquotient is determined uniquely. Therefore, we have $\tau'' \cong \tau'$. This proves the lemma.
\end{proof}

The discussion preceding Lemma \ref{lemma:really_subq}---in particular, Lemma \ref{lemma:unique_quotient}---shows that in case 1a, $\tau'$ is uniquely determined: it is the Langlands quotient of $\chi_W|\cdot|^e \times \dotsm \times \chi_W|\cdot|^1 \rtimes \tau$, where $\tau$ is the tempered part in the standard representation of $\sigma$. Now Lemma \ref{lemma:really_subq} shows that $\tau'$ appears as a subquotient of $\Theta_{-l}(\pi_0)$ participating \eqref{eq:epi0}, so it remains to see the that:
\begin{Lem}
\label{lemma:really_small_theta}
The only irreducible subquotient of $\Theta_{-l}(\pi_0)$ with standard module of the form $\chi_W|\cdot|^e \times \dotsm \times \chi_W|\cdot|^1 \rtimes \tau$ is $\theta_{-l}(\pi_{0})$.
\end{Lem}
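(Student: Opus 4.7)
The strategy is to upgrade Remark \ref{rem:possible_subq}---which classifies irreducible subquotients of $\Theta_{-l}(\pi_{00})$ for the discrete series $\pi_{00}$---to an analogous classification for $\Theta_{-l}(\pi_0)$, and then to use the assumed form of the standard module of $\tau'$, together with the irreducibility of the first lift $\Theta_{-1}(\pi_0)$, to force $\tau' = \theta_{-l}(\pi_0)$.

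Concretely, I would first establish that every irreducible subquotient of $\Theta_{-l}(\pi_0)$ is either tempered or a Langlands quotient of
\[
\chi_W|\cdot|^e \times \dotsm \times \chi_W|\cdot|^{(1+l')/2} \rtimes \tau_0,
\]
for some odd integer $l_0 \leq l' < l$ and some tempered $\tau_0$. Starting from $\chi_V\Delta \rtimes \pi_{00} \twoheadrightarrow \pi_0$ (Lemma \ref{lemma:temp_supp}) and applying Corollary \ref{cor:theta_epi}---whose hypothesis holds because no unitary discrete series summand of $\Delta$ equals $\textnormal{St}_k\nu^{(-l-k)/2}$ when $l>0$---one obtains $\chi_W\Delta \rtimes \Theta_{-l}(\pi_{00}) \twoheadrightarrow \Theta_{-l}(\pi_0)$. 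Combined with Remark \ref{rem:possible_subq} for $\pi_{00}$, every irreducible subquotient of $\Theta_{-l}(\pi_0)$ appears inside $\chi_W\Delta \rtimes S$ with $S$ on that list. Rearranging $\chi_W\Delta \rtimes S$ into standard-module form---via Lemma \ref{lemma:irr_zel} for non-adjacent segments and Lemma \ref{rem:swap2} for adjacent ones (occurring only when $\chi_W\textnormal{St}_{l'} \in \Delta$)---is carried out exactly as in Cases I and II of the proof of Proposition \ref{prop:main}, and yields the stated classification.

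For our $\tau'$, uniqueness of the standard module forces $l' = 1$; since in Case~1a the conservation relation gives $m^{\text{up}}(\pi_0) = m^{\text{down}}(\pi_0)$ and hence $l_0 = 1$, this is admissible, and $\tau$ is identified as a tempered subquotient built from $\Theta_{-1}(\pi_0)$. The decisive input is the irreducibility of $\Theta_{-1}(\pi_0)$, which has already been established in the treatment of Case~1 in Section \ref{sec:lifts}: when $l(\pi_0) = -1$, the full first lift $\Theta_{-1}(\pi_0)$ is irreducible and tempered, hence equals $\theta_{-1}(\pi_0)$. It follows that $\tau = \theta_{-1}(\pi_0)$, and $\tau'$ is the Langlands quotient of $\chi_W|\cdot|^e \times \dotsm \times \chi_W|\cdot|^1 \rtimes \theta_{-1}(\pi_0)$, which by Proposition \ref{prop:lifts_temp}(i) is exactly $\theta_{-l}(\pi_0)$. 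The main obstacle lies in the first step: the adjacent-segment cases, where segments of $\Delta$ meet the leading $\GL$-characters in the Langlands data of $S$, require careful Lemma \ref{rem:swap2}-style bookkeeping to ensure that no exotic cuspidal support can appear. Fortunately this is precisely the analysis already performed for Proposition \ref{prop:main}, which can be transposed here without modification.
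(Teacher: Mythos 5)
Your proposal departs from the paper's approach, and there is a genuine gap in it. The paper proves this lemma by first establishing Lemma \ref{lem_descend}---a descent result proven directly from Kudla's filtration of $R_{Q_1}(\omega_{m_0,n_0})$ and Casselman's criterion---and then applying it inductively in $l$: if $\tau'$ is a subquotient of $\Theta_{-l}(\pi_0)$ with standard module ending in $\chi_W|\cdot|^e\rtimes\tau_1$, then $\tau_1$ is a subquotient of $\Theta_{2-l}(\pi_0)$, and one descends to the irreducible $\Theta_{-1}(\pi_0)$.

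Your strategy instead tries to inherit a classification of subquotients of $\Theta_{-l}(\pi_0)$ from the known one for the discrete series $\pi_{00}$ (Remark \ref{rem:possible_subq}) via the surjection $\chi_W\Delta\rtimes\Theta_{-l}(\pi_{00})\twoheadrightarrow\Theta_{-l}(\pi_0)$. This gives something strictly weaker: after the rearrangements via Lemmas \ref{lemma:irr_zel} and \ref{rem:swap2}, you conclude that the tempered part $\tau$ of the Langlands data of $\tau'$ is a subquotient of $\chi_W\Delta\rtimes\theta_{-1}(\pi_{00})$. That is \emph{not} the same as $\tau$ being a subquotient of $\Theta_{-1}(\pi_0)$. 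By Goldberg's lemma, $\chi_W\Delta\rtimes\theta_{-1}(\pi_{00})$ is a direct sum of $2^L$ inequivalent tempered representations, and only one of them is $\theta_{-1}(\pi_0) = \Theta_{-1}(\pi_0)$; your argument as written gives no reason to exclude the others as possible values of $\tau$. The "decisive input" you cite---irreducibility of $\Theta_{-1}(\pi_0)$---would close the argument \emph{if} you knew $\tau$ occurs in $\Theta_{-1}(\pi_0)$, but your classification never establishes this. Notice that the crucial feature of Remark \ref{rem:possible_subq} is precisely that the tempered piece $\sigma_0'$ is a subquotient of $\Theta_{-l'}(\pi_{00})$---a non-trivial claim proven in \cite{muic2008structure} via Kudla's filtration---and it is exactly this feature that is lost when you try to push the classification through $\chi_W\Delta\rtimes(\cdot)$. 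Recovering it is what Lemma \ref{lem_descend} does, and it cannot be replaced by the purely $\GL$-level rearrangement arguments of Section \ref{sec:interlude}.
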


To prove this, we first show the following:
\begin{Lem}
\label{lem_descend}
Let $l>1$ be an odd integer and let $\pi_0 \in \Irr (G(W_{n_0}))$ be a tempered representation such that $\Theta_{-l}(\pi_0) \neq 0$. Let $\tau_1$ be an irreducible representation such that $\Theta_{-l}(\pi_0)$ possesses an irreducible subquotient which is at the same time a quotient of $\chi_W|\cdot|^{\frac{l-1}{2}} \rtimes \tau_1$. Then $\tau_1$ is a subquotient of $\Theta_{2-l}(\pi_0)$ (in particular, $\Theta_{2-l}(\pi_0) \neq 0$).
\end{Lem}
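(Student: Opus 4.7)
The plan is to apply second Frobenius reciprocity to the hypothesis $\chi_W \nu^{(l-1)/2} \rtimes \tau_1 \twoheadrightarrow \sigma$ and then use the $H$-side version of Kudla's filtration (Theorem~\ref{tm:Kudla}) to descend to $\Theta_{2-l}(\pi_0)$.

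First, by Lemma~\ref{lemma:MVWinv} the hypothesis is equivalent to $\sigma \hookrightarrow \chi_W\nu^{-(l-1)/2} \rtimes \tau_1$, so Frobenius reciprocity yields a surjection $R_{Q_1}(\sigma) \twoheadrightarrow \chi_W\nu^{-(l-1)/2}\otimes \tau_1$; exactness of $R_{Q_1}$ then produces $\chi_W\nu^{-(l-1)/2}\otimes \tau_1$ as a subquotient of $R_{Q_1}(\Theta_{-l}(\pi_0))$. Setting $m = n_0 + \epsilon + l$, I would next apply the symmetric ($H$-side) analog of Theorem~\ref{tm:Kudla} to $R_{Q_1}(\omega_{m,n_0})$: for $k=1$ this gives a two-step filtration whose top quotient is $J^0 \cong \chi_W\nu^{-(l-1)/2}\otimes \omega_{m-2,n_0}$ (the exponent $-(l-1)/2$ being $\mu_1 = (n_0-m+1+\epsilon)/2$), and whose subrepresentation $J^1$ is induced from $\Sigma_1 \otimes \omega_{m-2, n_0-2}$. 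Passing to the maximal $\pi_0$-isotypic quotient on the $G_{n_0}$-side (exactly as in the proof of Corollary~\ref{cor:shaving}) produces a two-step filtration of $R_{Q_1}(\Theta_{-l}(\pi_0))$ whose top quotient is $\chi_W\nu^{-(l-1)/2}\otimes \Theta_{2-l}(\pi_0)$.

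The essential step is then to show that $J^1$, after the $\pi_0$-isotypic quotient, cannot carry $\chi_W\nu^{-(l-1)/2}\otimes \tau_1$ as a subquotient. The $\pi_0$-isotypic quotient of $J^1$ is controlled, via Frobenius on the $G_{n_0}$-side, by the $\GL_1$-exponents appearing in the Jacquet module of $\pi_0$ along $P_1^G$, combined with the fixed modular shift $\nu^{\mp l/2}$ and the $\chi_V,\chi_W$-twists built into $\Sigma_1$. Temperedness of $\pi_0$ (Casselman's criterion) bounds these Jacquet exponents, and the strict inequality $l>1$ is exactly what is needed to ensure that none of them combine with the fixed twists to equal $-(l-1)/2$. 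Hence $\chi_W\nu^{-(l-1)/2}\otimes \tau_1$ must embed in the top quotient $\chi_W\nu^{-(l-1)/2}\otimes \Theta_{2-l}(\pi_0)$, and $\tau_1$ is a subquotient of $\Theta_{2-l}(\pi_0)$; in particular $\Theta_{2-l}(\pi_0)\neq 0$.

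The main obstacle is this exponent-matching step: carrying out the detailed bookkeeping to show that the Casselman-bounded exponents of $R_{P_1^G}(\pi_0)$ cannot combine with the modular shift and the $\chi_V\chi_W$-twist of $\Sigma_1$ to produce $-(l-1)/2$ once $l>1$. The hypothesis $l>1$ is sharp here, since at $l=1$ the two filtration pieces acquire the same $\GL_1$-central character and the central-character separation argument used above breaks down --- which is consistent with $l=1$ being the ``first lift'' boundary case that is handled separately.
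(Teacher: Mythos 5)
Your strategy coincides with the paper's: realize $\chi_W\nu^{-e}\otimes\tau_1$ (with $e=(l-1)/2$) as a subquotient of $R_{Q_1}(\Theta_{-l}(\pi_0))$, invoke the $H$-side Kudla filtration of $R_{Q_1}(\omega_{m,n_0})$ with $m=n_0+\epsilon+l$, and rule out the contribution of the $J^1$ piece so that only $J^0=\chi_W\nu^{-e}\otimes\omega_{m-2,n_0}$ can carry the subquotient. But you explicitly leave the non-contribution of $J^1$ --- which is the entire content of the lemma --- as an unresolved ``bookkeeping obstacle,'' so the argument does not close. There is also an imprecision in your account of what has to be ruled out: the modular shift $\nu^{\mp l/2}$ you invoke lives in the $\GL_{k-a}$ block of Kudla's filtration, and that block is empty when $a=k=1$; the only twist present in $J^1$ is the one built into $\Sigma_1$, which pairs the $\GL_1$-characters as $\chi_V\mu^{-1}\otimes\chi_W\mu$.

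Here is how the paper closes the step cleanly, without open-ended exponent bookkeeping. Since $\Theta_{-l}(\pi_0)$ is admissible, $R_{Q_1}(\Theta_{-l}(\pi_0))$ splits over $\GL_1(\F)$-central characters, and the subquotient $\chi_W\nu^{-e}\otimes\tau_1$ lies in the $\chi_W\nu^{-e}$-isotypic summand. If $J^1$ were to surject onto $\pi_0\otimes R_{Q_1}(\Theta_{-l}(\pi_0))_{\chi_W\nu^{-e}}$, then second Frobenius reciprocity on the $G_{n_0}$-factor, combined with the $\Sigma_1$-pairing, would force $R_{\overline{P_1}}(\pi_0)$ to contain a subrepresentation of the form $\chi_V\nu^{e}\otimes\pi_1'$ with $\pi_1'\neq 0$. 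Since $\pi_0$ is tempered and $e>0$ (this is exactly where $l>1$ enters), Casselman's criterion forbids such a subrepresentation. The central-character decomposition plus this single application of Casselman's criterion to $R_{\overline{P_1}}(\pi_0)$ replaces your ``exponent-matching'' and is what makes the argument terminate; with it in place, $\chi_W\nu^{-e}\otimes\tau_1$ must come from $J^0$, so $\tau_1$ is a subquotient of $\Theta_{2-l}(\pi_0)$.
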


\begin{proof}
Set $e= \frac{l-1}{2}$ and let $\tau'$ be an irreducible subquotient of $\Theta_{-l}(\pi_0)$ such that
\[
\chi_W|\cdot|^e \rtimes \tau_1 \twoheadrightarrow \tau',
\]
i.e.~$\tau' \hookrightarrow \chi_W|\cdot|^{-e} \rtimes \tau_1$. We use Kudla's filtration; a similar computation is used in the proof of Theorem 4.1 of \cite{muic2008structure}. The map we have just obtained shows that $\Hom(\tau', \allowbreak \chi_W|\cdot|^{-e} \allowbreak \rtimes \tau_1) \neq 0$. By Frobenius reciprocity, this means that $\Hom(R_{Q_1}(\tau'), \chi_W|\cdot|^{-e} \otimes \tau_1) \neq 0$, where $Q_1$ denotes the appropriate standard maximal parabolic subgroup of $H(V_m)$. (Also, the $\Hom$-spaces above are the intertwining spaces for the actions of $H(V_m)$ and the Levi factor of $Q_1$, respectively). Since $\tau'$ is a subquotient of $\Theta_{-l}(\pi_0)$, $\chi_W|\cdot|^{-e} \otimes \tau_1$ is a subquotient of $R_{Q_1}(\Theta_{-l}(\pi_0))$. We know that $R_{Q_1}(\Theta_{-l}(\pi_0))$ is admissible (because $\Theta_{-l}(\pi_0)$ is), so we may decompose it as
\[
R_{Q_1}(\Theta_{-l}(\pi_0)) = \bigoplus_\mu R_{Q_1}(\Theta_{-l}(\pi_0))_\mu
\]
where $\mu$ runs over (a finite set of) characters of $\GL_1(\F)$, and $R_{Q_1}(\Theta_{-l}(\pi_0))_\mu$ denotes the maximal subrepresentation of $R_{Q_1}(\Theta_{-l}(\pi_0))$ on which $\GL_1(\F)$ acts by $\mu$. Therefore, $\chi_W\allowbreak|\cdot|^{-e} \otimes \tau_1$ is a subquotient of $R_{Q_1}(\Theta_{-l}(\pi_0))_{\chi_W|\cdot|^{-e}}$.

On the other hand, $\pi_0 \otimes \Theta_{-l}(\pi_0)$ is a quotient of $\omega_{m_0,n_0}$---here $n_0$ is defined by $\pi_0 \in \Irr(G(W_{n_0}))$, $m_0 = n_0 + \epsilon + l$, and $\omega_{m_0,n_0}$ is the corresponding Weil representation. By the exactness of $R_{Q_1}$, $\pi_0 \otimes R_{Q_1}(\Theta_{-l}(\pi_0))$ is a quotient of $R_{Q_1}(\omega_{m_0,n_0})$. Therefore, $\pi_0 \otimes R_{Q_1}(\Theta_{-l}(\pi_0))_{\chi_W|\cdot|^{-e}}$ is also a quotient of $R_{Q_1}(\omega_{m_0,n_0})$. Recall that $e = \frac{l-1}{2}$. Kudla's filtration of $R_{Q_1}(\omega_{m_0,n_0})$ is
\begin{align*}
J^0 &= \chi_W|\cdot|^{-e} \otimes \omega_{m_0-2,n_0}\quad (\text{the quotient})\\
J^1 &= \text{Ind}_{P_1\times \GL_1(\F) \times H_{m_0-2}}^{G_{n_0} \times \GL_1(\F) \times H_{m_0-2}}(\Sigma_1 \otimes \omega_{m_0-2,n_0-2}) \quad (\text{the subrepresentation}).
\end{align*}
We claim that $J^1$ cannot participate in the map $R_{Q_1}(\omega_{m_0,n_0})\twoheadrightarrow \pi_0 \otimes R_{Q_1}(\Theta_{-l}(\pi_0))_{\chi_W|\cdot|^{-e}}$. Assume the contrary, i.e. that $\Hom_{G_{n_0} \times \GL_1(\F) \times H_{m_0-2}}(J^1 ,\pi_0 \otimes R_{Q_1}(\Theta_{-l}(\pi_0))_{\chi_W|\cdot|^{-e}}) \neq 0$. Then an application of the second Frobenius reciprocity shows that
\[
\Hom_{\GL_1(\F) \times G_{n_0-2} \times \GL_1(\F) \times H_{m_0-2}}(\Sigma_1 \otimes \omega_{m_0-2,n_0-2},R_{\overline{P_1}}(\pi_0) \otimes R_{Q_1}(\Theta_{-l}(\pi_0))_{\chi_W|\cdot|^{-e}}) \neq 0,
\]
where $\overline{P}_1$ denotes the parabolic subgroup opposite to $P_1$.  Thus there is an irreducible subquotient, say $\chi_W|\cdot|^{-e} \otimes \pi_1$, of $R_{Q_1}(\Theta_{-l}(\pi_0))_{\chi_W|\cdot|^{-e}}$ such that
\[
\Hom_{\GL_1(\F) \times G_{n_0-2} \times \GL_1(\F) \times H_{m_0-2}}(\Sigma_1 \otimes \omega_{m_0-2,n_0-2},R_{\overline{P_1}}(\pi_0) \otimes \chi_W|\cdot|^{-e} \otimes \pi_1) \neq 0.
\]
This implies $R_{\overline{P_1}}(\pi_0)$ has a subrepresentation of the form $\chi_V|\cdot|^e \otimes \pi_1'$ for some $\pi_1' \neq 0$. As $\pi_0$ is tempered, and $e > 0$, Casselman's criterion shows that this is impossible.

We have thus shown that $\pi_0 \otimes R_{Q_1}(\Theta_{-l}(\pi_0))_{\chi_W|\cdot|^{-e}}$ is a quotient of $J^0$, which immediately implies that $\tau_1$ is a subquotient of $\Theta_{2-l}(\pi_0)$.
\end{proof}

\begin{proof}[Proof of Lemma \ref{lemma:really_small_theta}]
Let $\tau'$ be a subquotient of $\Theta_{-l}(\pi_0)$ such that
\[
\chi_W|\cdot|^e \times \dotsm \times \chi_W|\cdot|^1 \rtimes \tau \twoheadrightarrow \tau'
\]
for some tempered $\tau$. It suffices to prove that $\tau \cong \theta_{-1}(\pi_0)$; Proposition \ref{prop:lifts_temp} then shows that $\tau' \cong \theta_{-l}(\pi_0)$. We do this by applying Lemma \ref{lem_descend} inductively with respect to $l$.

For the base case, $l=3$ and we have $\chi_W|\cdot|^1\rtimes \tau \twoheadrightarrow \tau'$. Applying Lemma \ref{lem_descend} with $l=3$ and $\tau_1 = \tau$ we get that $\tau$ is a subquotient of $\Theta_{-1}(\pi_0)$. Since $\Theta_{-1}(\pi_0)$ is irreducible, we deduce $\tau = \theta_{-1}(\pi_0)$, as claimed.

For the induction hypothesis, assume that Lemma \ref{lemma:really_small_theta} holds when $l$ is replaced by $l-2$; in other words, assume that any irreducible subquotient of $\Theta_{2-l}(\pi_0)$ of the form $L(\chi_W|\cdot|^{e-1}, \dotsc, \allowbreak \chi_W|\cdot|^1; \tau)$ necessarily satisfies $\tau = \theta_{-1}(\pi_0)$.

Now let $\tau'$ be an irreducible subquotient of $\Theta_{-l}(\pi_0)$ of the form $L(\chi_W|\cdot|^{e}, \dotsc,  \chi_W|\cdot|^1;\tau)$. Denote by $\tau_1$ the Langlands quotient of $\chi_W|\cdot|^{e-1} \times \dotsm \times \chi_W|\cdot|^1 \rtimes \tau$, so that $\chi_W|\cdot|^e \rtimes \tau_1 \twoheadrightarrow \tau'$. Lemma \ref{lem_descend} then shows that $\tau_1$ is a subquotient of $\Theta_{2-l}(\pi_0)$. By the induction hypothesis, we now deduce that $\tau =  \theta_{-1}(\pi_0)$, as desired.
\end{proof}
This completes case (1a). Let us summarize: Lemma \ref{lemma:unique_quotient} applied to the left-hand side of \eqref{eq:epi4} shows that we have 
\[
\chi_W\Pi \rtimes \tau' \twoheadrightarrow \theta_{-l}(\pi)
\]
for some irreducible representation $\tau'$ of the form $L(\chi_W|\cdot|^{e}, \dotsc,  \chi_W|\cdot|^1;\tau)$. Lemma \ref{lemma:really_subq} then shows that we may assume that $\tau'$ is a subquotient of  $\Theta_{-l}(\pi_0)$, from where, using Lemma \ref{lemma:really_small_theta}, we deduce that $\tau' = \theta_{-l}(\pi_0)$. We have thus shown that
\[
\chi_W\Pi \rtimes \theta_{-l}(\pi_0) \twoheadrightarrow \theta_{-l}(\pi),
\]
and we have determined the standard module of $\theta_{-l}(\pi)$ (using the proof of Lemma 7.5):
\[
\theta_{-l}(\pi) = L(\chi_W\delta_r\nu^{s_r}, \dotsc, \chi_W\delta_1\nu^{s_1}, \chi_W|\cdot|^e, \dotsc, \chi_W|\cdot|^1; \theta_{-1}(\pi_0)).
\]

\noindent\underline{\textbf{Case 1b:}{ $m_\phi(\chi_V) = 2h > 0$}}\\
In this case, we know that on both towers $\theta_{-l}(\pi_{00})$ is the Langlands quotient of
	\[
	\chi_W|\cdot|^e \times \dotsm \times \chi_W|\cdot|^1 \rtimes \theta_{-1}(\pi_{00}).
	\]
that is, the unique quotient of
\[
\chi_W\langle|\cdot|^1,|\cdot|^e\rangle \rtimes \theta_{-1}(\pi_{00}).
\]
Thus, \eqref{eq:epi2} leads to
\[
\chi_W\delta_r\nu^{s_r} \times \dotsm \times \chi_W\delta_1\nu^{s_1} \times \chi_W\Delta \times \chi_W\langle|\cdot|^1,|\cdot|^e\rangle \rtimes \theta_{-1}(\pi_{00}) \twoheadrightarrow \sigma.
\]
In contrast with the previous case, we now know that $\pi_0$ contains $\chi_V$ in its tempered support; equivalently, the trivial character of $\GL_1(\F)$ appears among the representations which define $\Delta$. Moreover, it appears exactly $h$ times because of our assumption is that $m_\phi(\chi_V) = 2h$. This means that we can write $\chi_W\Delta = (\chi_W,h) \times \chi_W\Delta'$ for some appropriately chosen $\Delta'$ induced from discrete series representations.

Using Lemma \ref{lemma:irr_zel}, we can swap $\chi_W\langle|\cdot|^1,|\cdot|^e\rangle $ with all the representations appearing in $\chi_W\Delta'$. Instead of the above epimorphism, we thus get
\[
\chi_W\delta_r\nu^{s_r} \times \dotsm \times \chi_W\delta_1\nu^{s_1} \times (\chi_W,h) \times \chi_W\langle|\cdot|^1,|\cdot|^e\rangle \times \chi_W\Delta'  \rtimes \theta_{-1}(\pi_{00}) \twoheadrightarrow \sigma.
\]
Here is where the situation gets more complicated than in the previous case: according to Lemma \ref{rem:swap2}, $(\chi_W,h) \times \chi_W\langle|\cdot|^1,|\cdot|^e\rangle$ reduces. Furthermore, we know $\mathbb{1} \times \langle|\cdot|^1,|\cdot|^e\rangle$ has two irreducible subquotients; by Lemma \ref{rem:swap2} those are
\begin{enumerate}[(i)]
\item $\langle|\cdot|^0,|\cdot|^e\rangle$ (the unique irreducible subrepresentation of $\mathbb{1} \times \langle|\cdot|^1,|\cdot|^e\rangle$);
\item $L =$(the unique irreducible) quotient of $\langle|\cdot|^2,|\cdot|^e\rangle \times \textnormal{St}_2\nu^{\frac{1}{2}}$.
\end{enumerate}
From here, our discussion ramifies into two possible cases, depending on the subquotient of $(\chi_W,h) \times \chi_W\langle|\cdot|^1,|\cdot|^e\rangle$ which participates in the above epimorphism (note that $L$ may switch places with $\mathbb{1}$ by Lemma \ref{lemma_specific_reducibility}):
\begin{enumerate}[(i)]
\item $\chi_W\delta_r\nu^{s_r} \times \dotsm \times \chi_W\delta_1\nu^{s_1} \times  \chi_W\langle|\cdot|^1,|\cdot|^e\rangle \times (\chi_W,h) \times \chi_W\Delta'  \rtimes \theta_{-1}(\pi_{00}) \twoheadrightarrow \sigma$;
\item $\chi_W\delta_r\nu^{s_r} \times \dotsm \times \chi_W\delta_1\nu^{s_1} \times   \chi_WL \times (\chi_W,h-1) \times \chi_W\Delta'  \rtimes \theta_{-1}(\pi_{00}) \twoheadrightarrow \sigma$.
\end{enumerate}
In both cases, we can specify the irreducible subquotient of the tempered part:
\begin{enumerate}[(i)]
\item There is an irreducible (and tempered) subquotient $\tau_1$ of 
$(\chi_W,h) \times \chi_W\Delta'  \rtimes \theta_{-1}(\pi_{00})$ such that
\[
\chi_W\delta_r\nu^{s_r} \times \dotsm \times \chi_W\delta_1\nu^{s_1} \times  \chi_W\langle|\cdot|^1,|\cdot|^e\rangle \rtimes \tau_1 \twoheadrightarrow \sigma;
\]
\item There is an irreducible (and tempered) subquotient $\tau_2$ of 
$(\chi_W,h-1) \times \chi_W\Delta'  \rtimes \theta_{-1}(\pi_{00})$ such that
\[
\chi_W\delta_r\nu^{s_r} \times \dotsm \times \chi_W\delta_1\nu^{s_1} \times   \chi_WL \times \tau_2 \twoheadrightarrow \sigma.
\]
\end{enumerate}

We now use Lemma \ref{lemma:unique_quotient} to show that the the induced representations in (i) and (ii) have unique irreducible quotients. If (i) is true, then we have a situation just like in case 1a: the left-hand side in (i) has a unique irreducible quotient, and therefore the subquotient of $\Theta_{-l}(\pi_0)$ which participates in the epimorphism is equal to the Langlands quotient of
\[
\chi_W|\cdot|^e \times \dotsm \times \chi_W|\cdot|^1 \rtimes \tau_1.
\]
If (ii) holds, we can do the same thing, except when one of the $\delta_i\nu^{s_i}$ is defined by a segment $[|\cdot|^a,|\cdot|^b]$ whose upper end $b$ is equal to $1$ (see Lemma \ref{lemma:unique_quotient}). Since $a+b > 0$, this exceptional situation occurs only if $[a,b] = [0,1]$ or $[a,b] = [1]$, i.e.~$\delta_i\nu^{s_i} = \textnormal{St}_2\nu^{\frac{1}{2}}$ or $\delta_i\nu^{s_i} = |\cdot|^1$.

We can eliminate the possibility $\delta_i\nu^{s_i} = |\cdot|^{1}$ immediately: since $\chi_V$ is contained in the tempered support of $\pi_0$, and $\chi_V|\cdot|^1 \times \chi_V$ reduces, using Proposition \ref{prop:std_mod_reduc} and \cite[Lemma 2.1 and 2.2]{muic2005reducibility} we get that the standard module of $\pi$ reduces if it contains $\chi_V|\cdot|^1$. As $\pi$ is $\chi$-generic, the standard module conjecture (Theorem \ref{thm:gen_properties} ii) shows that this is not possible.

The possibility that $\delta_
i\nu^{s_i} = \textnormal{St}_2\nu^{\frac{1}{2}}$ cannot be eliminated. On the other hand, Proposition \ref{prop:std_mod_reduc} shows that $\chi_V\textnormal{St}_2\nu^{\frac{1}{2}}$ can appear in the standard module of $\pi$ at most once; since $s_i = \frac{1}{2}$, we may assume (without loss of generality) that $i=1$. Thus, let us assume that option (ii) holds and that $\delta_1\nu^{s_1} = \textnormal{St}_2\nu^{\frac{1}{2}}$. We then have
\[
\chi_W\delta_r\nu^{s_r} \times \dotsm \times \chi_W\delta_2\nu^{s_2}  \times \chi_W\textnormal{St}_2\nu^{\frac{1}{2}} \times \chi_WL \rtimes \tau_2 \twoheadrightarrow \sigma.
\]
Lemma \ref{lemma_specific_reducibility} now shows that we can write $\chi_WL \times \chi_W\textnormal{St}_2\nu^{\frac{1}{2}}$ instead of $\chi_W\textnormal{St}_2\nu^{\frac{1}{2}} \times \chi_WL$:
\[
\chi_W\delta_r\nu^{s_r} \times \dotsm \times \chi_W\delta_2\nu^{s_2} \times  \chi_W\langle|\cdot|^2,|\cdot|^e\rangle \times \chi_W\textnormal{St}_2\nu^{\frac{1}{2}} \times \chi_W\textnormal{St}_2\nu^{\frac{1}{2}} \rtimes \tau_2 \twoheadrightarrow \sigma.
\]
We can apply Lemma \ref{lemma:unique_quotient}: we have shown that  the remaining $\delta_2\nu^{s_2}, \dotsc$, $\delta_r\nu^{s_r}$ are defined by segments which do not end in $|\cdot|^1$. Thus, even in this exceptional case, the representation on the left-hand side of (ii) has a unique irreducible quotient.

In particular, this shows that cases (i) and (ii) are mutually exclusive, because they imply different standard modules for $\sigma$. Therefore, regardless of whether (i) or (ii) is valid, we may now apply Lemma \ref{lemma:really_subq}. This way, (i) and (ii) lead to the following conclusion: the irreducible subquotient of $\Theta_{-l}(\pi_0)$ which participates in \eqref{eq:epi0} has a standard module of the form:
\begin{enumerate}[(i)]
\item $\chi_W|\cdot|^e \times \dotsm \times \chi_W|\cdot|^1 \rtimes \tau_1$; or
\item $\chi_W|\cdot|^e \times \dotsm \times \chi_W|\cdot|^2 \times \chi_W\textnormal{St}_2\nu^{\frac{1}{2}} \rtimes \tau_2$.
\end{enumerate}
It remains to see whether (i) or (ii) is valid, and to determine $\tau_1$ (resp.~$\tau_2$). To this end, we first use Lemma \ref{lem_descend} inductively, just like in the proof of Lemma \ref{lemma:really_small_theta}. It shows the following: 
\begin{itemize}
\item if (i) holds, then the (unique) irreducible quotient of $\chi_W|\cdot|^1 \rtimes \tau_1$ is isomorphic to a subquotient of $\Theta_{-3}(\pi_0)$;
\item if (ii) holds, then the (unique) irreducible quotient of $\chi_W\textnormal{St}_2\nu^{\frac{1}{2}} \rtimes \tau_2$ is isomorphic to a subquotient of $\Theta_{-3}(\pi_0)$.
\end{itemize}
We will show that (i) holds for the lifts to the going-down tower, whereas (ii) holds for the lifts to the going-up tower. To prove this, we need another auxiliary result; we state and prove it in a general form which will allow us to also apply it to other cases we are considering. Recall that $m_\phi(\chi_V) = 2h>0$, i.e.~that $\chi_V$ appears $h$ times in the tempered support of $\pi_0$. This means that there is an irreducible representation $\pi_0'$ (whose parameter does not contain $\chi_V$) such that
\[
\pi_0 \hookrightarrow (\chi_V,h) \rtimes \pi_0'.
\]
The right-hand side in the above equation is completely reducible; since $\chi_V$ is not contained in the parameter of $\pi_0'$, it splits into exactly two irreducible (and non-isomorphic) representations. The following lemma (with $l=1$) thus applies to our situation in this case:
\begin{Lem}
\label{lem_resolve}
Let $\pi_0'$ be an irreducible tempered representation such that $l(\pi_0') = l-2$, whose parameter does not contain $\chi_VS_l$ and (if $l>1$) contains $\chi_VS_{l-2}$ with odd multiplicity. By Lemma \ref{lemma:goldberg}, for any integer $h>0$ the representation $(\chi_V\textnormal{St}_l,h) \rtimes \pi_0'$ is completely reducible, of length two.

If $l > 1$, the two irreducible constituents of this representation differ by their first occurrence indices by Theorem 4.1 of \cite{atobe2017local}. To be precise, we denote by $(V_m)$ the tower on which $\theta_{l-2}(\pi_0') \neq 0$ and let $(V_m')$ the other tower. We may then denote the irreducible constituents of $(\chi_V\textnormal{St}_l,h) \rtimes \pi_0'$ by $\pi_1$ and $\pi_2$ so that $\theta_l(\pi_1) \neq 0$ and $\theta_{l-2}(\pi_2) \neq 0$ are their first occurrences on the tower $(V_m)$.

If $l=1$, we choose one tower $(V_m)$ from the fixed pair of towers (this time the choice is arbitrary) and let the other be $(V_m')$. The first occurrence index (as defined in \S \ref{subs:first_occurrence}) for both constituents is $1$. However, the going-down tower is not the same for $\pi_1$ and $\pi_2$---see Theorem 4.1 (2) in \cite{atobe2017local}. Therefore, we may again denote the irreducible constituents by $\pi_1$ and $\pi_2$ so that $\theta_{1}(\pi_1) \neq 0$ and $\theta_{-1}(\pi_2) \neq 0$ are their first occurrences on the tower $(V_m)$.

Denote by $A$ and $B$ the Langlands quotients of the form
\begin{gather}
L(\chi_W|\cdot|^\frac{l+1}{2}; \tau_1) \label{eq_optionA}\tag{A}\\
L(\chi_W\textnormal{St}_{l+1}\nu^\frac{1}{2}; \tau_2) \label{eq_optionB}\tag{B}
\end{gather}
where $\tau_1$ and $\tau_2$ are some tempered representations. We consider the full lifts $\Theta_{-2-l}(\pi_1)$ and $\Theta_{-2-l}(\pi_2)$ to the tower $(V_m')$.
\begin{enumerate}[(a)]
\item The representation $\Theta_{-2-l}(\pi_1)$ has only one irreducible subquotient of the form \ref{eq_optionB}, namely $\theta_{-2-l}(\pi_1)$. It does not contain any irreducible subquotients of the form \ref{eq_optionA}.
\item The representation $\Theta_{-2-l}(\pi_2)$ has only one irreducible subquotient of the form \ref{eq_optionA}, namely $\theta_{-2-l}(\pi_2)$. It does not contain any irreducible subquotients of the form \ref{eq_optionB}.
\end{enumerate}
\end{Lem}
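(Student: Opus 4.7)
My plan is to combine three ingredients: the conservation relation, which locates $\Theta_{-l-2}(\pi_i)$ within the tower $(V_m')$; Atobe--Gan's explicit description of small theta lifts of tempered representations (Theorems 4.3 and 4.5 of \cite{atobe2017local}), which identifies $\theta_{-l-2}(\pi_i)$; and Lemma \ref{lem_descend}, used in descent arguments to exclude spurious subquotients. The representations $\pi_1$ and $\pi_2$ share an $L$-parameter but are distinguished by the character of the component group (fixed by the Whittaker datum from \S \ref{subs:LLC}), and this single datum is precisely what will separate the form-\ref{eq_optionA} and form-\ref{eq_optionB} outcomes on the tower $(V_m')$.

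First I would identify the small lifts. By the conservation relation, $\Theta_{-l-2}(\pi_1)$ is the \emph{first} non-zero lift on $(V_m')$, whereas $\Theta_{-l-2}(\pi_2)$ is the \emph{second} (the first being $\Theta_{-l}(\pi_2)\neq 0$). Running Atobe--Gan's formulas for first lifts on a going-up tower in the case where $\chi_V S_l$ appears in the parameter with even multiplicity $2h$, and combining with the second-lift description of Theorem \ref{thm:Muic_disc}(ii) extended to the tempered setting, I expect to read off $\theta_{-l-2}(\pi_1)=L(\chi_W\textnormal{St}_{l+1}\nu^{1/2};\tau_2)$ of form \ref{eq_optionB} and $\theta_{-l-2}(\pi_2)=L(\chi_W|\cdot|^{(l+1)/2};\tau_1)$ of form \ref{eq_optionA}, where $\tau_1$ and $\tau_2$ are specific tempered pieces attached to the lower lifts.

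Next I would exclude the opposite form in each case. For (a), if a form-\ref{eq_optionA} subquotient $\sigma=L(\chi_W|\cdot|^{(l+1)/2};\tau_1')$ sat inside $\Theta_{-l-2}(\pi_1)$, then Lemma \ref{lem_descend} (with its internal $l$ replaced by $l+2$) would force $\tau_1'$ to be a subquotient of $\Theta_{-l}(\pi_1)$; but the latter vanishes since the first occurrence of $\pi_1$ on $(V_m')$ is at level $-l-2$. For (b), any form-\ref{eq_optionB} subquotient of $\Theta_{-l-2}(\pi_2)$ is ruled out by the Mui\'c/Atobe--Gan structural description: every non-tempered subquotient of the second lift has $\GL$-part a product of characters $\chi_W|\cdot|^{\bullet}$, which is incompatible with the Steinberg $\GL$-part $\chi_W\textnormal{St}_{l+1}\nu^{1/2}$ of form \ref{eq_optionB}.

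Uniqueness of the stated form in each case should follow similarly---in (a) from Howe duality combined with the ``all non-tempered subquotients look alike'' structure of the first lift, and in (b) from Lemma \ref{lem_descend} pushing any second form-\ref{eq_optionA} subquotient back to a piece of the first lift $\Theta_{-l}(\pi_2)$ whose $L$-packet rigidity (Proposition \ref{prop:Lpackets}) already determines $\tau_1$. The main obstacle I anticipate is threading the component-group bookkeeping carefully enough to confirm that it is indeed $\pi_1$ (and not $\pi_2$) whose going-up first lift takes the Steinberg form \ref{eq_optionB}; the edge case $l=1$, where the down/up labels are a convention, requires the extra verification that $\Theta_{-l}(\pi_1)=0$ on the chosen $(V_m')$, which follows from Theorem 4.1 of \cite{atobe2017local}.
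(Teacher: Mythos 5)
Your exclusion argument for part (a) is essentially correct: applying Lemma \ref{lem_descend} with its internal $l$ replaced by $l+2$, any form-\ref{eq_optionA} subquotient $L(\chi_W|\cdot|^{(l+1)/2};\tau_1')$ of $\Theta_{-l-2}(\pi_1)$ would force $\tau_1'$ to be a subquotient of $\Theta_{-l}(\pi_1)$, which vanishes on $(V_m')$. That much is fine, and it is in fact a cleaner observation than what the paper does at the corresponding spot.

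However, the rest of the sketch has a genuine gap. For part (b) you appeal to ``the Mui\'c/Atobe--Gan structural description: every non-tempered subquotient of the second lift has $\GL$-part a product of characters.'' That structure statement is Theorem \ref{thm:Muic_disc}(ii), which is proved for \emph{discrete series}. The paper's own remark is explicit that the proof extends to a tempered input only when its parameter does not contain $\chi_V S_l$ --- which is true for $\pi_0'$, but \emph{false} for $\pi_1,\pi_2$, whose parameter contains $\chi_V S_l$ with multiplicity $2h>0$. This is exactly why the paper does not analyze $\Theta_{-l-2}(\pi_2)$ directly: it instead realizes
$\Theta_{-l-2}(\pi_1)\oplus\Theta_{-l-2}(\pi_2)$ as a quotient of $(\chi_W\textnormal{St}_l,h)\rtimes\Theta_{-l-2}(\pi_0')$ via Kudla's filtration, applies the extended structure theorem to $\Theta_{-l-2}(\pi_0')$ (where it is legitimate), and is thereby reduced to studying $\Pi=\chi_W|\cdot|^{(l+1)/2}\rtimes T$ for an explicit irreducible tempered $T$. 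Without that reduction, the claim about the $\GL$-parts of the non-tempered subquotients of $\Theta_{-l-2}(\pi_2)$ is unproved.

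The uniqueness claims have the same problem. For (a) you invoke the ``first lift is irreducible/tempered'' picture (Howe duality plus Theorem \ref{thm:Muic_disc}(i)), and for (b) you invoke Proposition \ref{prop:Lpackets} to get $L$-packet rigidity for $\Theta_{-l}(\pi_2)$. Both statements, as formulated, apply to discrete series inputs or (for Proposition \ref{prop:Lpackets}(1)(i)) to tempered inputs at levels $m\leq n+1+\epsilon$; here $\pi_1,\pi_2$ are tempered but not discrete series, and the relevant $m$ exceeds $n+1+\epsilon$. The paper replaces these shortcuts with a concrete count: using Tadi\'c's $\mu^*$ formula, it computes the number of irreducible subquotients of the form $(\chi_W\textnormal{St}_l,h)\otimes\pi''$ (with $\pi''$ non-tempered) in $R_{P_{lh}}(\Pi)$, obtains exactly $2^h$, and shows that $\theta_{-l-2}(\pi_1)$ and $\theta_{-l-2}(\pi_2)$ each contribute $2^{h-1}$, yielding multiplicity one. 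That Jacquet-module computation is the real substance of the proof and is missing from your sketch.
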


\begin{proof}
Notice that
\begin{align*}
\Hom_{G_{n}}(\omega_{m,n},({\chi_V\textnormal{St}_l},h) \rtimes \pi_0')_\infty &= \Hom_{G_{n}}(\omega_{m,n}, \pi_1)_\infty \oplus \Hom_{G_{n}}(\omega_{m_0,n_0}, \pi_2)_\infty\\
&= \Theta_{-l-2}(\pi_1)^\vee \oplus \Theta_{-l-2}(\pi_2)^\vee.
\end{align*}
Here $n$ is defined by $\pi_1,\pi_2\in G(W_{n})$, $m = n + \epsilon + l +2$ and $\omega_{m,n}$ is the corresponding Weil representation. A standard application of Kudla's filtration (just like in Corollary \ref{cor:theta_epi}) yields
\[
\Hom_{G_{n}}(\omega_{m,n},({\chi_V\textnormal{St}_l},h) \rtimes \pi_0')_\infty \hookrightarrow ({\chi_W\textnormal{St}_l},h) \rtimes \Theta_{-l-2}(\pi_0')^\vee
\]
which, after taking contragredients, leads to
\[
({\chi_W\textnormal{St}_l},h) \rtimes \Theta_{-l-2}(\pi_0') \twoheadrightarrow 
 \Theta_{-l-2}(\pi_1) \oplus \Theta_{-l-2}(\pi_2).
\]
We are looking for non-tempered subquotients of $\Theta_{-l-2}(\pi_1)$ and $\Theta_{-l-2}(\pi_2)$. We now use Theorem 4.1 of \cite{muic2008structure}, which shows that the only non-tempered subquotient of $\Theta_{-l-2}(\pi_0')$ is $\theta_{-l-2}(\pi_0') = L(\chi_W\allowbreak |\cdot|^{\frac{l+1}{2}} \allowbreak \rtimes \theta_{-l}(\pi_0'))$. We note that, although the theorem is originally stated for discrete series representations, it may be applied here; the same proof works because the parameter of $\pi_0'$ does not contain $\chi_VS_l$. This implies that the non-tempered irreducible subquotients of $\Theta_{-l-2}(\pi_1) \oplus \Theta_{-l-2}(\pi_2)$ are necessarily contained in $({\chi_W\textnormal{St}_l},h)\times \chi_W|\cdot|^\frac{l+1}{2} \rtimes \Theta_{-l}(\pi_0')$, i.e.~in
\[
\Pi=\chi_W|\cdot|^\frac{l+1}{2} \times ({\chi_W\textnormal{St}_l},h) \rtimes \Theta_{-l}(\pi_0').
\]
A standard argument using the tempered support and the irreducibility of first lifts for discrete series (like the one used in Case $1$ of \S $5$) combined with Theorem 4.5 of \cite{atobe2017local} now shows that $\Theta_{-l}(\pi_0')$ is irreducible and tempered. Furthermore, its parameter contains $\chi_WS_l$, so that $({\chi_W\textnormal{St}_l},h) \rtimes \Theta_{-l}(\pi_0')$ is also irreducible and tempered; we denote this representation by $T$.  By Proposition \ref{prop:lifts_temp} we know that $\theta_{-l-2}(\pi_1)$ is of the form \ref{eq_optionB}, whereas $\theta_{-l-2}(\pi_2)$ is of the form \ref{eq_optionA}. To be precise, we have
\begin{align*}
\theta_{-l-2}(\pi_1) = L(\chi_W\textnormal{St}_{l+1}\nu^\frac{1}{2}; ({\chi_W\textnormal{St}_l},h-1) \rtimes \Theta_{-l}(\pi_0'))\\
\theta_{-l-2}(\pi_2) = L(\chi_W|\cdot|^\frac{l+1}{2}; ({\chi_W\textnormal{St}_l},h) \rtimes \Theta_{-l}(\pi_0')).
\end{align*}

Therefore, $\Pi$ contains at least one irreducible subquotient of the form \ref{eq_optionA}, and one of the form \ref{eq_optionB}. The Lemma will be proven if we show that $\Pi$ contains no other irreducible subquotients of the form \ref{eq_optionA} or B. We establish this by proving the following:
\begin{itemize}
\item any subquotient of the form \ref{eq_optionA} appearing in $\Pi$ is necessarily isomorphic to $\theta_{-l-2}(\pi_2)$;
\item any subquotient of the form \ref{eq_optionB} appearing in $\Pi$ is necessarily isomorphic to $\theta_{-l-2}(\pi_1)$;
\item $\theta_{-l-2}(\pi_1)$ and $\theta_{-l-2}(\pi_2)$ appear in $\Pi$ with multiplicity $1$.
\end{itemize}
To prove the first of these claims, let $\pi'$ be a subquotient of $\Pi$ of the form $A$. We then have $\chi_W|\cdot|^\frac{l+1}{2} \rtimes \tau_1 \twoheadrightarrow \pi'$, or, using Lemma \ref{lemma:MVWinv}, $\pi' \hookrightarrow \chi_W|\cdot|^{-\frac{l+1}{2}} \rtimes \tau_1$. Frobenius reciprocity now implies
$R_{P_1}(\pi') \twoheadrightarrow \chi_W|\cdot|^{-\frac{l+1}{2}} \otimes \tau_1$.
In other words, $R_{P_1}(\pi')$ contains a (sub)quotient of the form $\chi_W|\cdot|^{-\frac{l+1}{2}} \otimes \tau_1$. On the other hand, we may compute $R_{P_1}(\Pi)$ using formula \eqref{eq_Tadic}, $\mu^*( \chi_W|\cdot|^\frac{l+1}{2} \rtimes T) = M^*(\chi_W|\cdot|^\frac{l+1}{2}) \rtimes \mu^*(T)$. As $T$ is tempered, Casselman's criterion shows that $R_{P_1}(T)_{\chi_W|\cdot|^{-\frac{l+1}{2}}} = 0$. From here, it follows that the only irreducible subquotient of the form $\chi_W|\cdot|^{-\frac{l+1}{2}} \otimes \tau_1$ in the semisimplification of $R_{P_1}(\Pi)$ is ${\chi_W|\cdot|^{-\frac{l+1}{2}}}\otimes T$. Therefore, the Langlands quotient of $\chi_W|\cdot| \rtimes T$ is the only subquotient of $\Pi$ of the form \ref{eq_optionA}.

The second claim, about subquotients of the form \ref{eq_optionB}, is proven using a similar, albeit more complicated argument. We begin in the same manner: if $\pi'$ is any irreducible subquotient of $\Pi$ of the form \ref{eq_optionB}, then $R_{P_{l+1}}(\pi')$ contains a (sub)quotient of the form $\chi_W\textnormal{St}_{l+1}\nu^{-\frac{1}{2}} \otimes \tau_2$. We now use the the formulae \eqref{eq_Tadic} and \eqref{eq_M} to look for subquotients of this form in $R_{P_{l+1}}(\Pi)$ and show that in this case $\tau_2 = ({\chi_W\textnormal{St}_l},h-1)  \rtimes \Theta_{-l}(\pi_0')$ is necessary. The calculation needed here is somewhat arduous; we omit the details because we give a detailed description of a similar Jacquet module argument in the proof of the third claim below. We restrict ourselves to noting that, apart from Casselman's criterion, in this case we also use the fact that $R_{P_l}( \Theta_{-l}(\pi_0'))$ does not contain an irreducible subquotient of the form $\chi_W\textnormal{St}_l \otimes \tau'$ (for some irreducible tempered representation $\tau'$). This follows from the fact that $\chi_WS_l$ appears in the parameter of $\Theta_{-l}(\pi_0')$ with multiplicity $1$, whereas the existence of a subquotient of the form $\chi_W\textnormal{St}_l \otimes \tau'$ would imply multiplicity greater than or equal to $2$.

Finally, to prove the third claim, we count the (irreducible) subquotients of the form $({\chi_W\textnormal{St}_l},h)  \otimes \pi''$, where $\pi''$ is irreducible and non-tempered, in $R_{P_{lh}}(\Pi)$. We claim that there are exactly $2^h$ such subquotients in $R_{P_{lh}}(\Pi)$. To prove this, we use formulae \eqref{eq_Tadic} and \eqref{eq_M} once again. They show that the semisimplification of $R_{P_{lh}}(\Pi)$ is equal to the (semisimplification of the) following sum:
\begin{align*}
\label{eq_semi}\tag{\textasteriskcentered}
\sum_{\substack{i_1,\dotsc,i_h \\ \frac{-1-l}{2} \leq i_k \leq \frac{l-1}{2}}} \ \sum_{\substack{j_1,\dotsc,j_h \\ i_k \leq {j_k} \leq \frac{l-1}{2}, \forall k}}&\prod_{k=1}^{h} \chi_W\langle |\cdot|^\frac{l-1}{2},|\cdot|^{-i} \rangle \times \chi_W\langle |\cdot|^\frac{l-1}{2}, |\cdot|^{j+1}\rangle \otimes \chi_W\langle |\cdot|^{j}, |\cdot|^{i+1}\rangle\\
& \times (\chi_W|\cdot|^\frac{l+1}{2} \otimes \mathbb{1} + \chi_W|\cdot|^{-\frac{l+1}{2}} \otimes \mathbb{1} + \mathbb{1}\otimes \chi_W|\cdot|^\frac{l+1}{2})\\
&\times (\delta \otimes \tau),
\end{align*}
where $\delta \otimes \tau$ goes over irreducible subquotients in $\mu^*(\Theta_{-l}(\pi_0'))$. Recall that, for the above formula, we interpret the segment notation in the first line so that $\langle |\cdot|^\frac{l-1}{2},|\cdot|^{\frac{l+1}{2}} \rangle = \langle |\cdot|^{i},\allowbreak |\cdot|^{i+1} \rangle = \mathbb{1} \in \Irr(\GL_0(\F))$. Furthermore, to help readability, the index $k$ is dropped from $i_k$ and $j_k$ in the first line.

We are looking for subquotients of the form $({\chi_W\textnormal{St}_l},h)  \otimes \pi''$ in the above sum. We concentrate on the $\GL$ part of such subquotients, i.e.~$({\chi_W\textnormal{St}_l},h)$. First, notice that $\chi_W|\cdot|^{\pm \frac{l+1}{2}}$ does not appear in the cuspidal support of $({\chi_W\textnormal{St}_l},h)$. Therefore, such subquotients only appear in the part of \eqref{eq_semi} which corresponds to the summand $\mathbb{1}\otimes \chi_W|\cdot|^\frac{l+1}{2}$ in the second row.

We now determine which summands of \eqref{eq_semi} possess a subquotient of the form $({\chi_W\textnormal{St}_l},h)  \otimes \pi''$. Notice that in any such summand the cuspidal representation $\chi_W|\cdot|^\frac{1-l}{2}$ appears in the $\GL$ part exactly $h$ times. A key observation is that in any such summand, $\chi_W|\cdot|^\frac{1-l}{2}$ cannot appear in the cuspidal support of $\delta$. Indeed, if some $\delta$ had $\chi_W|\cdot|^\frac{1-l}{2}$ in its cuspidal support, it would then follow that $R_P(\Theta_{-l}(\pi_0'))$ (for a suitable choice of standard parabolic $P$) contains a subquotient of the form $\chi_W\langle |\cdot|^y, |\cdot|^x\rangle \otimes \tau'$ (for some irreducible representation $\tau$), with $y-x \in \mathbb{Z}_{\geq 0}$ and $\frac{1-l}{2} \in \{x,x+1,\dotsc,y\}$. However, since we are only considering summands where the $\GL$ part is equal to $({\chi_W\textnormal{St}_l},h)$, $y$ cannot be greater than $\frac{l-1}{2}$. This shows that $x+y \leq 0$. If $x+y < 0$, we arrive at a contradiction with the Casselman's temperedness criterion. On the other hand, $x+y = 0$ implies that $x = -y = \frac{1-l}{2}$, i.e.~that $\chi_W\textnormal{St}_l \otimes \tau'$ is a subquotient of $R_{P_l}(\Theta_{-l}(\pi_0'))$. However, this in turn implies that $\chi_WS_l$ appears with multiplicity at least $2$ in the parameter of $\Theta_{-l}(\pi_0')$, and this contradicts the fact that the parameter of $\Theta_{-l}(\pi_0')$ contains $\chi_WS_l$ with multiplicity one. We have thus proven that $\delta$ cannot contribute to the total number of representations $\chi_W|\cdot|^\frac{1-l}{2}$ in the cuspidal support of the $\GL$ part of a summand which contains $({\chi_W\textnormal{St}_l},h) \otimes \pi''$.

In other words, all the representations $\chi_W|\cdot|^\frac{1-l}{2}$ in the cuspidal support of $({\chi_W\textnormal{St}_l},h)$ come from the first row of \eqref{eq_semi}. Recall that we need to have exactly $h$ of them. One verifies easily that $\chi_W|\cdot|^\frac{1-l}{2}$ appears only in summands where $i_k = j_k \in \{ -\frac{l+1}{2},\frac{l-1}{2} \}$ for all $k \in \{1,\dotsc,h\}$. Thus, for each index $k = 1, \dotsc, h$, we have two choices for the value of $i_k=j_k$. Therefore, there are $2^h$ such summands in \eqref{eq_semi}; they are all of the form
\[
({\chi_W\textnormal{St}_l},h) \otimes \chi_W|\cdot|^\frac{l+1}{2} \rtimes \Theta_{-l}(\pi_0').
\]
Every such summand obviously contains at least one irreducible subquotient of the form $({\chi_W\textnormal{St}_l},h) \otimes \pi''$ with $\pi''$ non-tempered, namely $({\chi_W\textnormal{St}_l},h) \otimes L(\chi_W|\cdot|^\frac{l+1}{2} ;\Theta_{-l}(\pi_0'))$. To prove that there are exactly $2^h$ such subquotients in $R_{P_{hl}}(\Pi)$, it remains to prove that  $L(\chi_W \allowbreak |\cdot|^\frac{l+1}{2} ;\Theta_{-l}(\pi_0'))$ is the only non-tempered irreducible subquotient of $\chi_W|\cdot|^\frac{l+1}{2} \rtimes \Theta_{-l}(\pi_0')$. We leave the proof of this simple fact to the reader; let us mention that the proof of this fact again combines a Jacquet module analysis with Casselman's criterion and the fact that $\chi_WS_l$ appears with multiplicity $1$ in the parameter of $\Theta_{-l}(\pi_0')$.

We are now ready to finish the proof of the third claim (and thus of the Lemma), i.e.~of the fact that $\theta_{-l-2}(\pi_1)$ and $\theta_{-l-2}(\pi_2)$ appear in $\Pi$ with multiplicity one. To prove this, it suffices to prove that both $R_{P_{lh}}(\theta_{-l-2}(\pi_1))$ and $R_{P_{lh}}(\theta_{-l-2}(\pi_2))$ contain exactly $2^{h-1}$ irreducible subquotients of the form $({\chi_W\textnormal{St}_l},h) \otimes \pi''$ with $\pi''$ non-tempered. We omit the proof of this fact because it follows the same steps used above to determine the number of such subquotients in $R_{P_{lh}}(\Pi)$. We point out that, when repeating the computation in case of $\pi_2$, it is helpful to notice that $\theta_{-l-2}(\pi_2)$ is actually a quotient of $\chi_WL(\chi_W|\cdot|^\frac{l+1}{2}, \textnormal{St}_l) \times ({\chi_W\textnormal{St}_l},h-1) \rtimes \Theta_{-l}(\pi_0')$. This concludes the proof of the lemma.
\end{proof}

Let us return to our analysis of the lifts in case 1b. We now apply Lemma \ref{lem_resolve} with $l=1$ to the discussion preceding the Lemma. If we are lifting $\pi_0$ to the going-up tower, Lemma \ref{lem_resolve} (a) with $\pi_0 = \pi_1$ shows that (i) cannot hold, because $\Theta_{-3}(\pi_0)$ contains no subquotient with standard module of the form $\chi_W|\cdot|^1 \rtimes \tau_1$. Therefore (ii) holds, and moreover, Lemma \ref{lem_resolve} (b) shows that in this case $L(\chi_W\textnormal{St}_2\nu^\frac{1}{2}; \tau_2)$ is in fact equal to $\theta_{-3}(\pi_0)$. Comparing this with Proposition \ref{prop:lifts_temp}, we see that the irreducible subquotient of $\Theta_{-l}(\pi_0)$ whose standard module is described by (ii) is equal to $\theta_{-l}(\pi_0)$.

Therefore, on the going-up tower we have
\[
\chi_W\delta_r\nu^{s_r} \times \dotsm \times \chi_W\delta_1\nu^{s_1} \rtimes  \theta_{-l}(\pi_{0}) \twoheadrightarrow \theta_{-l}(\pi).
\]
Furthermore, the proof of Lemma \ref{lemma:unique_quotient} (which we have already used to arrive at (i) and (ii)) shows that the standard module of $\theta_{-l}(\pi)$ is obtained by adding $\chi_W\delta_r\nu^{s_r}, \dotsc,\chi_W\delta_1\nu^{s_1}$ to the factors appearing in the standard module of $\theta_{-l}(\pi_0)$ (and sorting decreasingly). We arrive at the analogous conclusion about the lifts to the going-down tower by using Lemma \ref{lem_resolve} with $\pi_0 = \pi_2$ to show that, on the going-down tower, (i) must hold with $\tau_1 = \theta_{-1}(\pi_0)$.

This concludes case 1b.

\bigskip

\noindent\underline{\textbf{Case 2:}{ $m_\phi(\chi_V)$ is odd, going-down tower}}\\
This case is much simpler than case 1b. Just as in the first two cases, we have $\chi_W\langle|\cdot|^1, \allowbreak |\cdot|^e\rangle \rtimes \theta_{-1}(\pi_{00}) \twoheadrightarrow \theta_{-l}(\pi_{00})$, so \eqref{eq:epi2} leads to
\[
\chi_W\delta_r\nu^{s_r} \times \dotsm \times \chi_W\delta_1\nu^{s_1} \times \chi_W\Delta \times \chi_W\langle|\cdot|^1,|\cdot|^e\rangle \rtimes \theta_{-1}(\pi_{00}) \twoheadrightarrow \sigma.
\]
The complicated part in the previous case was handling the exceptional cases in which $\chi_W\langle|\cdot|^1,|\cdot|^e\rangle$ could not switch places with all the representations which define $\chi_W\Delta$ (namely, $\chi_W$).

The difference here is the following: since the multiplicity of $\chi_V$ in $\phi$ is now odd, one copy of $\chi_V$ must also appear in the parameter of $\pi_{00}$. Using \cite[Theorem 4.3 (2)]{atobe2017local}, we see that in this case $\theta_{-1}(\pi_{00})$ is not a discrete series representation, but a tempered representation with $\chi_W$ occurring twice in its parameter. This means that there is a discrete series representation $\sigma_{00}$ such that $\theta_{-1}(\pi_{00}) \hookrightarrow \chi_W \rtimes \sigma_{00}$ (moreover, $\theta_{-1}(\pi_{00})$ splits off).

We claim that $\theta_{-l}(\pi_{00})$, as the (unique) quotient of $\chi_W\langle|\cdot|^1,|\cdot|^e\rangle \rtimes \theta_{-1}(\pi_{00})$, must also be a quotient of
\[
\chi_W\langle|\cdot|^0,|\cdot|^e\rangle \rtimes \sigma_{00}.
\]
Otherwise, Lemma \ref{rem:swap2} would imply that $\theta_{-l}(\pi_{00})$ is a quotient of $\chi_W|\cdot|^e \times \dotsm \times \chi_W|\cdot|^2 \times \chi_W\textnormal{St}_2\nu^{\frac{1}{2}} \rtimes \sigma_{00}$, and this is not possible because of the uniqueness of the standard module of $\theta_{-l}(\pi_{00})$. Therefore, \eqref{eq:epi2} in fact leads to
\[
\chi_W\delta_r\nu^{s_r} \times \dotsm \times \chi_W\delta_1\nu^{s_1} \times \chi_W\Delta \times \chi_W\langle|\cdot|^0,|\cdot|^e\rangle \rtimes \sigma_{00} \twoheadrightarrow \sigma,
\]
which simplifies things considerably: in contrast to $\chi_W\langle|\cdot|^1,|\cdot|^e\rangle$, $\chi_W\langle|\cdot|^0,|\cdot|^e\rangle$ can switch places with all the representations appearing in the definition of $\chi_W\Delta$, according to Lemma \ref{lemma:irr_zel}. We thus get
\[
\chi_W\delta_r\nu^{s_r} \times \dotsm \times \chi_W\delta_1\nu^{s_1} \times \chi_W\langle|\cdot|^0,|\cdot|^e\rangle \times \chi_W\Delta \rtimes \sigma_{00} \twoheadrightarrow \sigma.
\]
Using $\chi_W\langle|\cdot|^1,|\cdot|^e\rangle \times \chi_W \twoheadrightarrow \chi_W\langle|\cdot|^0,|\cdot|^e\rangle$, this leads to
\[
\chi_W\delta_r\nu^{s_r} \times \dotsm \times \chi_W\delta_1\nu^{s_1} \times \chi_W\langle|\cdot|^1,|\cdot|^e\rangle \times \chi_W \times \chi_W\Delta \rtimes \sigma_{00} \twoheadrightarrow \sigma.
\]
We can now deduce that there is an irreducible (and obviously tempered) subquotient $\tau$ of $\chi_W \times \chi_W\Delta \rtimes \sigma_{00}$ such that
\[
\chi_W\delta_r\nu^{s_r} \times \dotsm \times \chi_W\delta_1\nu^{s_1} \times \chi_W\langle|\cdot|^1,|\cdot|^e\rangle \rtimes \tau \twoheadrightarrow \sigma.
\]
From here, we can repeat the arguments of Case 1a word for word; this leads to the same conclusion regarding the standard module of $\theta_{-l}(\pi)$.

\bigskip

\noindent\underline{\textbf{Case 3:}{ $m_\phi(\chi_V)$ is odd, going-up tower}}\\
In this case we have $\chi_W\langle|\cdot|^2,|\cdot|^e\rangle \rtimes \theta_{-3}(\pi_{00}) \twoheadrightarrow \theta_{-l}(\pi_{00})$ so that \eqref{eq:epi2} leads to
\[
\chi_W\delta_r\nu^{s_r} \times \dotsm \times \chi_W\delta_1\nu^{s_1} \times \chi_W\Delta \times \chi_W\langle|\cdot|^2,|\cdot|^e\rangle  \rtimes \theta_{-3}(\pi_{00}) \twoheadrightarrow \sigma.
\]
According to Lemma \ref{lemma:irr_zel}, $\chi_W\langle|\cdot|^2,|\cdot|^e\rangle $ can switch places with all the representations which define $\chi_W\Delta$, except $\chi_W\textnormal{St}_3 = \chi_W\langle|\cdot|^1,|\cdot|^{-1}\rangle$. Thus we consider two subcases again, depending on the parity of $m_\phi(\chi_VS_3)$.

\bigskip
\noindent\textbf{Case 3a:}{ $m_\phi(\chi_VS_3)=2h+1$}\\
The approach to this Case is the same as the one we used for Case 2, mutatis mutandis. In this case the parameter of $\pi_{00}$ contains $\chi_VS_3$ so Theorem 4.5 (1) of \cite{atobe2017local} shows that the multiplicity of $\chi_WS_3$ in the parameter of $\theta_{-3}(\pi_{00})$ equals $2$. Therefore, $\theta_{-3}(\pi_{00})$ is not in discrete series (but is tempered), and there is a discrete series representation $\sigma_{00}$ such that
\[
\chi_W\textnormal{St}_3 \rtimes \sigma_{00} \twoheadrightarrow \theta_{-3}(\pi_{00}).
\]
We now use an observation similar to the one we used in Case 2. Lemma \ref{rem:swap2} shows that
\[
\chi_WL \rtimes \sigma_{00} \twoheadrightarrow \theta_{-l}(\pi_{00}),
\]
where $L = L(|\cdot|^e\times \dotsm \times |\cdot|^2 \times \textnormal{St}_3)$. Another option is
\[
\chi_WL' \rtimes \sigma_{00} \twoheadrightarrow \theta_{-l}(\pi_{00}),
\]
with $L' = L(|\cdot|^e\times \dotsm \times|\cdot|^3 \times \textnormal{St}_4\nu^\frac{1}{2})$, but this contradicts what we know about the standard module of $\theta_{-l}(\pi_{00})$. Thus, starting once again from \eqref{eq:epi2}, we can write
\[
\chi_W\delta_r\nu^{s_r} \times \dotsm \times \chi_W\delta_1\nu^{s_1} \times \chi_W\Delta \times \chi_WL \rtimes \sigma_{00} \twoheadrightarrow \sigma.
\]
Furthermore, we may write $\Delta = \Delta' \times (\textnormal{St}_3,h)$, just like in Case 1b. Lemma \ref{lemma_specific_reducibility} now shows that $\chi_WL$ can be swapped with $\chi_W(\textnormal{St}_3,h)$. Therefore, we have
\[
\chi_W\delta_r\nu^{s_r} \times \dotsm \times \chi_W\delta_1\nu^{s_1} \times \chi_W\Delta' \times \chi_WL \times \chi_W(\textnormal{St}_3,h) \rtimes \sigma_{00} \twoheadrightarrow \sigma.
\]
Finally, as $\langle|\cdot|^2,|\cdot|^e\rangle  \times \textnormal{St}_3 \twoheadrightarrow L$, and $\langle|\cdot|^2,|\cdot|^e\rangle $ can switch places with all the representations which define $\Delta'$, we arrive at
\[
\chi_W\delta_r\nu^{s_r} \times \dotsm \times \chi_W\delta_1\nu^{s_1} \times \chi_W\langle|\cdot|^2,|\cdot|^e\rangle \times \chi_W\Delta' \times \chi_W(\textnormal{St}_3,h+1) \rtimes \sigma_{00} \twoheadrightarrow \sigma.
\]
Just like in Case 2, we conclude that there is an irreducible (tempered) subquotient $\tau$ of $ \chi_W\Delta' \times \chi_W(\textnormal{St}_3,h+1) \rtimes \sigma_{00}$ such that
\[
\chi_W\delta_r\nu^{s_r} \times \dotsm \times \chi_W\delta_1\nu^{s_1} \times \chi_W\langle|\cdot|^2,|\cdot|^e\rangle  \times \tau \twoheadrightarrow \sigma.
\]
Note that in Case 3, $l(\pi_0)=1$ and $m_\phi(\chi_V)$ is odd. Lemma \ref{lem:no_ex} thus shows that none of the factors $\delta_r\nu^{s_r}$ are equal to $|\cdot|^1$ or $\textnormal{St}_2\nu^\frac{1}{2}$. Therefore, we may apply Lemma \ref{lemma:unique_quotient}. Once again, we repeat the arguments of Case 1a: Lemma \ref{lemma:unique_quotient} applied to the above epimorphism shows that we have 
\[
\chi_W\Pi \rtimes \tau' \twoheadrightarrow \theta_{-l}(\pi).
\]
for $\tau' = L(\chi_W|\cdot|^e, \dotsc, \chi_W|\cdot|^2; \tau)$. By Lemma \ref{lemma:really_subq} we may assume that $\tau'$ is a subquotient of $\Theta_{-l}(\pi_0)$; we then use Lemma \ref{lem_descend} like in Lemma \ref{lemma:really_small_theta} to show that $\tau' = \theta_{-l}(\pi_0)$; note that $\Theta_{-3}(\pi_0)$ is irreducible (cf.~Case 2 in \S $5$). We thus arrive at the conclusion:
\[
\chi_W\delta_r\nu^{s_r} \times \dotsm \times \chi_W\delta_1\nu^{s_1} \rtimes  \theta_{-l}(\pi_{0}) \twoheadrightarrow \theta_{-l}(\pi)
\]
and $\theta_{-l}(\pi) = L(\chi_W\delta_r\nu^{s_r}, \dotsc, \chi_W\delta_1\nu^{s_1}, |\cdot|^\frac{l-1}{2},\dotsc,|\cdot|^2,\theta_{-3}(\pi_0))$.

\medskip

\noindent\textbf{Case 3b:}{ $m_\phi(\chi_VS_3)=2h$}\\
\noindent In this case $\theta_{-3}(\pi_{00})$ is a discrete series representation such that $\chi_W\langle|\cdot|^2,|\cdot|^e\rangle \rtimes \theta_{-3}(\pi_{00}) \twoheadrightarrow \theta_{-l}(\pi_{00})$. Our approach in this case is similar to Case 1b.

Again we group $\chi_W\Delta =  (\chi_W\textnormal{St}_3,h) \times \chi_W\Delta'$ and notice that $\chi_W\langle|\cdot|^2,|\cdot|^e\rangle$ can switch places with $\chi_W\Delta'$, whereas switching places with $\chi_W\textnormal{St}_3$ leads to two different possibilities branching from \eqref{eq:epi2} (similar to the situation in Case 1b):
\begin{enumerate}[(i)]
\item there is an irreducible tempered representation $\tau_1$ such that
\[
\chi_W\delta_r\nu^{s_r} \times \dotsm \times \chi_W\delta_1\nu^{s_1} \times  \chi_W\langle|\cdot|^2,|\cdot|^e\rangle  \rtimes \tau_1 \twoheadrightarrow \sigma;
\]
\item  there is an irreducible tempered representation $\tau_2$ such that
\[
\chi_W\delta_r\nu^{s_r} \times \dotsm \times \chi_W\delta_1\nu^{s_1} \times   \chi_WL' \rtimes \tau_2 \twoheadrightarrow \sigma,
\]
\end{enumerate}
where $L'$ denotes $L(|\cdot|^e\times \dotsm \times|\cdot|^3 \times \textnormal{St}_4\nu^\frac{1}{2})$, i.e.~the unique irreducible quotient of $\langle|\cdot|^3,|\cdot|^e\rangle \times \textnormal{St}_4\nu^\frac{1}{2}$.

We now show that (ii) is not possible. First, we show that the left-hand side of (ii) has a unique irreducible quotient, since it is itself a quotient of a standard representation. This is shown using Lemma \ref{lemma:unique_quotient}; we just have to ensure that there are no representations among $\chi_W\delta_r\nu^{s_r}, \dotsc, \chi_W\delta_1\nu^{s_1}$ which violate condition \eqref{eq:tricky1} of Lemma \ref{lem:tricky_condition}. The representations causing the problems are those defined by a segment which ends in $\chi_W|\cdot|^2$. These are $\chi_W|\cdot|^2$, $\chi_W\langle|\cdot|^2,|\cdot|^1\rangle $, $\chi_W\langle|\cdot|^2,|\cdot|^0\rangle $ and $\chi_W\langle|\cdot|^2,|\cdot|^{-1}\rangle  = \chi_W\textnormal{St}_4\nu^\frac{1}{2}$.

Notice that for (ii) to be possible, $\chi_V\textnormal{St}_3$ needs to appear in the tempered support of $\pi_0$. But then the results of \cite{muic2005reducibility} imply that the standard module reduces whenever it contains the factor $\chi_W|\cdot|^2$, $\chi_W\langle|\cdot|^2,|\cdot|^1\rangle $ or $\chi_W\langle|\cdot|^2,|\cdot|^0\rangle $. Since the standard module of $\pi$ is irreducible, we may therefore assume that none of these exceptions appear. Thus, the only possible exception is $\chi_W\textnormal{St}_4\nu^\frac{1}{2}$. On the other hand, Lemma \ref{lemma_specific_reducibility} shows that $\textnormal{St}_4\nu^\frac{1}{2} \times L'$ is irreducible. This means that we can swap $\chi_W\textnormal{St}_4\nu^\frac{1}{2}$ and $\chi_WL'$ in (ii), getting
\[
\chi_W\delta_r\nu^{s_r} \times \dotsm \times \chi_W\delta_2\nu^{s_2} \times   \chi_W\langle|\cdot|^3,|\cdot|^e\rangle \times \chi_W\textnormal{St}_4\nu^\frac{1}{2}\times \chi_W\textnormal{St}_4\nu^\frac{1}{2}  \rtimes \tau_2.
\]
This enables us to apply Lemma \ref{lemma:unique_quotient} and show that the above representation has a unique irreducible quotient. We have now arrived at the following conclusion: if there were really an epimorphism like in (ii), then the irreducible subquotient of $\chi_WL' \rtimes \tau_2$ which participates in it would in fact be the unique quotient, i.e.~$L(|\cdot|^e\times \dotsm \times|\cdot|^3 \times \textnormal{St}_4\nu^\frac{1}{2}\rtimes \tau_2)$. As in the previous cases, Lemma \ref{lemma:really_subq} shows that this representation is also a subquotient of $\Theta_{-l}(\pi_0)$. An inductive application of Lemma \ref{lem_descend} (like in Lemma \ref{lemma:really_small_theta}) then implies that $L(\textnormal{St}_4\nu^\frac{1}{2}\rtimes \tau_2)$ is a subquotient of $\Theta_{-5}(\pi_0)$.

We now use Lemma \ref{lem_resolve} (with $l=3$) to show that this is not possible. In the notation of the lemma, $\pi_0$ corresponds to $\pi_2$, and we are lifting it to the going-up tower $(V_m')$. Part (b) of the lemma now shows that $\Theta_{-5}(\pi_0)$ contains no subquotients of the form $L(\textnormal{St}_4\nu^\frac{1}{2}\rtimes \tau_2)$.

We have thus shown that (i) always holds in Case 3b. Since the factors $\chi_W|\cdot|$ or $\chi_W\textnormal{St}_2\nu^\frac{1}{2}$ cannot appear in the standard module (by Lemma \ref{lem:no_ex}), there are no exceptions to Lemma \ref{lem:tricky_condition} and we may proceed with the proof just like in Case 1a, using the arguments of Lemmas \ref{lemma:unique_quotient}, \ref{lemma:really_subq} and \ref{lem_descend}.

\bigskip

\noindent This completes the analysis of the cases obtained by considering all the different possibilities for $\pi_0$. Together with the lifts determined in Section \ref{sec:lifts} it provides a complete description of all the lifts we have considered. The results are summarized in Theorem \ref{thm:lifts}.
\section{Unitary representations}
\label{sec:unitary}

A direct consequence of our results is a method for constructing a series of unitary representations of both $\Ort(V_m)$ and $\Sp(W_n)$.

To be more specific, the structure of the generic unitary dual is known by the work of Lapid, Mui\'{c} and Tadi\'{c} \cite{lapid2004generic}, whereas Theorem \ref{thm:lifts} provides an explicit description of the lifts of $\chi$-generic representations. On the other hand, the results of J.-S.~Li \cite{li1989singular} imply that the lifts of unitary representations in the stable range remain unitary.

Therefore, taking any $\chi$-generic unitary irreducible representation of $\Ort(V_m)$ or $\Sp(W_n)$ we obtain a sequence of (non-generic) unitary representations by looking at its theta lifts in the stable range. We thus arrive at the following result:
\begin{Prop}
Let $\tau$ be a tempered unitary $\chi$-generic representation of $G(W_n)$. Fix a Witt tower $\mathcal{V} = (V_m)$ and let $m_0$ denote the dimension of the anisotropic space in $\mathcal{V}$. 

Let $m_1 = \min\{m \in 2\mathbb{Z}: m > n + \epsilon, \theta(\tau,m) \neq 0\}$. Choose
\begin{itemize}
\item any $r \geq n$ and let $m_2 = m_0 + 2r$ (so that $\theta(\tau,m_2)$ is in the stable range); and
\item any sequence of $\delta_i \in \Irr_{\text{disc}}\GL_{d_i}(\F)$, $i=1,\dotsc,r$, and $s_r \geqslant \dotsb \geqslant s_1 > 0$ such that  $\chi_V\delta_r\nu^{s_r} \times \dotsm \times \chi_V\delta_1\nu^{s_1} \rtimes \tau$ satisfies conditions of Theorem 1.1, \cite{lapid2004generic}.
\end{itemize}
Finally, set $l_i = n - m_i + \epsilon$ for $i=1,2$. Then
\[
L(\chi_W\delta_r\nu^{s_r}, \dotsc, \chi_W\delta_1\nu^{s_1}, \chi_W|\cdot|^\frac{-1-l_2}{2}, \dotsc, \chi_W|\cdot|^\frac{1-l_1}{2}; \theta(\tau,m_1))
\]
is unitarizable.
\end{Prop}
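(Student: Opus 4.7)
The plan is to reduce the proposition to a direct application of Theorem \ref{thm:lifts}, the description of tempered lifts in Proposition \ref{prop:lifts_temp} (more precisely, its extension to non-tempered lifts via Theorem \ref{thm:Muic_disc}), the Lapid--Mui\'c--Tadi\'c criterion for generic unitarity, and Li's stable-range theorem.

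First I would form the induced representation
\[
\pi = \chi_V\delta_r\nu^{s_r} \times \dotsm \times \chi_V\delta_1\nu^{s_1} \rtimes \tau.
\]
By hypothesis, $\pi$ satisfies the conditions of \cite[Theorem~1.1]{lapid2004generic}, so $\pi$ is irreducible and unitarizable. Since $\tau$ is $\chi$-generic and tempered, the heredity of genericity (Theorem~\ref{thm:gen_properties}(i)) ensures $\pi$ is itself irreducible, $\chi$-generic, and---being a standard module in the Langlands sense---isomorphic to its standard module. In particular the hypotheses of Theorem~\ref{thm:lifts} are met.

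Next I would compute $\theta_{l_2}(\pi)$ using Theorem~\ref{thm:lifts}. The theorem expresses it in terms of the Langlands data of $\theta_{l_2}(\tau)$, so the intermediate step is to identify $\theta_{l_2}(\tau)$ explicitly. Because $m_2 > m_1 \geqslant m_{\text{temp}}(\tau)$, Theorem~\ref{thm:Muic_disc}(ii) (equivalently, Proposition~\ref{prop:lifts_temp}) gives
\[
\theta_{l_2}(\tau) = L\bigl(\chi_W|\cdot|^{\frac{-1-l_2}{2}}, \dotsc, \chi_W|\cdot|^{\frac{1-l_1}{2}}; \theta(\tau,m_1)\bigr),
\]
after converting the exponents via $l_i = n - m_i + \epsilon$. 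Substituting this expression into the Langlands-quotient formula of Theorem~\ref{thm:lifts} yields the irreducible representation displayed in the proposition; call it $\Pi$. Thus $\Pi \cong \theta_{l_2}(\pi) = \theta(\pi, V_{m_2})$.

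Finally, since $m_2 = m_0 + 2r$ with $r \geqslant n$, the pair $(G(W_n), H(V_{m_2}))$ lies in the stable range with $G(W_n)$ being the smaller group. Li's theorem \cite{li1989singular} asserts that in the stable range the theta lift of an irreducible unitarizable representation is again irreducible and unitarizable; applied to the unitary representation $\pi$ this gives that $\theta(\pi, V_{m_2}) = \Pi$ is unitarizable, which is the desired conclusion. The main point where care is required is matching the exponents $\frac{-1-l_2}{2}, \dotsc, \frac{1-l_1}{2}$ coming out of Proposition~\ref{prop:lifts_temp} (written there in terms of $m$ and $n$) with those in the statement (written in terms of $l_1, l_2$), and verifying that the resulting list of Langlands data is automatically sorted in the correct order: the exponents $s_i$ are positive while $\frac{-1-l_2}{2} > \dotsb > \frac{1-l_1}{2} > 0$ since $m_2 > m_1 > n + \epsilon$, so the full sequence is still decreasing after sorting---which is essentially the content of the ``$L(\dotsc;\cdot)$ notation is sort-insensitive'' convention fixed in Section~\ref{subs:reps}.
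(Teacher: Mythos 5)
Your reconstruction matches the paper's intent exactly: the paper gives no formal proof, only the informal discussion preceding the statement, and your chain of reasoning --- form $\pi$ via the Lapid--Mui\'c--Tadi\'c criterion to get an irreducible unitary $\chi$-generic representation, identify $\theta_{l_2}(\tau)$ as the Langlands quotient from Theorem \ref{thm:Muic_disc}(ii), feed this into Theorem \ref{thm:lifts}, and then invoke Li's stable-range theorem --- is precisely what the author has in mind. Your conversion of the exponents in Theorem \ref{thm:Muic_disc}(ii) to the $l_i$-notation and your check that $m_1 = m_{\text{temp}}(\tau)$ are correct.

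There is, however, a dimension slip near the end that deserves flagging, because it bears on whether the final invocation of Li's theorem is actually justified. You write $\theta(\pi, V_{m_2})$, but since $\pi$ lives on $G(W_{n+2d})$ with $d = \sum_i d_i$, its level-$l_2$ theta lift lands on $H(V_{m_2 + 2d})$, not $H(V_{m_2})$ (and indeed the displayed Langlands quotient has total dimension $m_2 + 2d$). Consequently the dual pair to which Li's theorem must be applied is $(G(W_{n+2d}), H(V_{m_2+2d}))$, whose stable-range requirement is that the Witt index $r + d$ of $V_{m_2+2d}$ be at least $\dim W_{n+2d} = n + 2d$, i.e.\ $r \geq n + \sum_i d_i$. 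The hypothesis $r \geq n$ puts the pair attached to $\tau$ in stable range, which is what the paper's parenthetical remark says; but it does not by itself put the pair attached to $\pi$ in stable range, and it is $\pi$, not $\tau$, to which Li's theorem is being applied. This concern is inherited from the paper's own statement, so it is not a defect introduced by you, but an airtight write-up should either strengthen the hypothesis to $r \geq n + \sum_i d_i$ or explain why $r \geq n$ already forces the stable range for the larger pair.
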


As the structure of the unitary dual of the classical groups is still largely unknown (especially the parts which are not local components of square-integrable automorphic forms classified in \cite{arthur2013endoscopic}), this result offers  a potentially useful insight.

\end{document}